\newcommand{\C}{\mathbb{C}}
\newcommand{\R}{\mathbb{R}}
\newcommand{\F}{\mathfrak{F}_s}
\newcommand{\FF}{\mathcal{F}}
\newcommand{\Real}{\mathfrak{Re}}
\newcommand{\I}{\mathbf{I}}
\newcommand{\ZZ}{\mathcal{Z}}
\newcommand{\LL}{\mathcal{L}}
\newcommand{\E}{\mathcal{E}}
\newcommand{\abs}[1]{\left|#1\right|}
\newcommand{\norm}[1]{\left\|#1\right\|}
\newcommand{\bra}[1]{\left\langle #1 \right|}
\newcommand{\ket}[1]{\left| #1 \right\rangle}
\newcommand{\ps}[2]{\left\langle #1,#2 \right\rangle}
\newcommand{\diff}{\mathop{}\!\mathrm{d}}
\newtheorem{theorem}{Theorem}[section]
\newtheorem{lemma}[theorem]{Lemma}
\newtheorem{proposition}[theorem]{Proposition}
\newtheorem{remark}[theorem]{Remark}
\newtheorem{assumption}{Hypothesis}
\numberwithin{equation}{section}
\author[S. Breteaux]{S{\'e}bastien Breteaux}
\address[S. Breteaux]{Universit{\'e} de Lorraine, CNRS, IECL, F-57000 Metz, France}
\email{sebastien.breteaux@univ-lorraine.fr}
\author[J. Faupin]{J{\'e}r{\'e}my Faupin}
\address[J. Faupin]{Universit{\'e} de Lorraine, CNRS, IECL, F-57000 Metz, France}
\email{jeremy.faupin@univ-lorraine.fr}
\author[J. Payet]{Jimmy Payet}
\address[J. Payet]{Universit{\'e} de Lorraine, CNRS, IECL, F-57000 Metz, France}
\email{jimmy.payet@univ-lorraine.fr}
\theoremstyle{plain}
\theoremstyle{plain}
\theoremstyle{plain}
\theoremstyle{plain}
\theoremstyle{remark}
\providecommand{\lemmaname}{Lemma}
\providecommand{\notationname}{Notation}
\providecommand{\propositionname}{Proposition}
\providecommand{\theoremname}{Theorem}
\begin{document}
\title[Quasi-classical ground states]{Quasi-classical ground states. I. Linearly coupled Pauli-Fierz Hamiltonians}
\begin{abstract}
We consider a spinless, non-relativistic particle bound by an external potential and linearly coupled to a quantized radiation field. The energy $\mathcal{E}(u,f)$ of product states of the form $u\otimes \Psi_f$, where $u$ is a normalized state for the particle and $\Psi_f$ is a coherent state in Fock space for the field, gives the energy of a Klein--Gordon--Schr\"odinger system. We minimize the functional $\mathcal{E}(u,f)$ on its natural energy space. We prove the existence and uniqueness of a ground state under general conditions on the coupling function. In particular, neither an ultraviolet cutoff nor an infrared cutoff is imposed. Our results establish the convergence in the ultraviolet limit of both the ground state and ground state energy of the Klein--Gordon--Schr\"odinger energy functional, and provide the second-order asymptotic expansion of the ground state energy at small coupling.
\end{abstract}
\maketitle
\tableofcontents
%
%
\section{Introduction}
We consider in this paper a non-relativistic, spinless quantum particle -- say, an electron -- in an external potential and coupled to a quantized, scalar radiation field. The Hilbert space for the total system is given by
\begin{equation*}
\mathcal{H} := \mathcal{H}_{\mathrm{el}}\otimes\mathcal{H}_{\mathrm{f}},
\end{equation*}
where  $\mathcal{H}_{\mathrm{el}}=L^2(\mathbb{R}^3)$ is the Hilbert space for the electron and $\mathcal{H}_{\mathrm{f}}$ is the Hilbert space for the field, given as the symmetric Fock space over the one-particle Hilbert space $\mathfrak{h}=L^2(\mathbb{R}^3)$. The full Hamiltonian is a self-adjoint operator acting on $\mathcal{H}$, of the form
\begin{equation}\label{eq:def_Hamilt}
\mathbb{H} := H_{V}\otimes\mathbf{I}_{\mathrm{f}}+\mathbf{I}_{\mathrm{el}}\otimes \mathbb{H}_{\mathrm{f}}+ \mathbb{H}_{\mathrm{int}},
\end{equation}
where $H_{V}=-\Delta+V$ is the Hamiltonian for the non-relativistic particle in the external potential $V$, $\mathbb{H}_{\mathrm{f}}$ is the Hamiltonian for the free field, $\mathbf{I}_\sharp$ stands for the identity on $\mathcal{H}_\sharp$ and $\mathbb{H}_{\mathrm{int}}$ is the interaction Hamiltonian, acting on $\mathcal{H}$. Such operators are usually called \emph{Pauli-Fierz Hamiltonians} \cite{PauliFierz38} in the literature. Their spectral theory has been thoroughly studied since the end of the nineties, including in particular ground state properties (see, among others, \cite{BachFrohlichPizzo09,BachFrohlichSigal98,BachFrohlichSigal99,Barbarouxetal10,DerezinskiGerard99,Hainzl03,GriesemerLiebLoss01,HaslerHerbst11,Sigal09, Spohn04, Gerard00, GriesemerHasler09, BachHach22, LiebLoss99} and references therein).

We focus in this paper on the case of an electron \emph{linearly coupled} to a scalar field. We consider an abstract class of linearly coupled Pauli-Fierz Hamiltonian that includes the \emph{Nelson model} \cite{Nelson64} and the \emph{Fr\"ohlich polaron model} \cite{Frohlich39}. 

We aim at studying the energy of product states
\begin{equation}\label{eq:quasi-class-en}
\mathcal{E}(u,f) := \big\langle (u\otimes \Psi_f),\mathbb{H}(u\otimes \Psi_f)\big\rangle, \quad \|u\|_{\mathcal{H}_{\mathrm{el}}}=1,\quad \|\Psi_f\|_{\mathcal{H}_{\mathrm{f}}}=1,
\end{equation}
assuming that the state of the quantized field, $\Psi_f$, is a \emph{coherent state} parametrized by $f\in\mathfrak{h}$. The functional $\mathcal{E}(u,f)$ is sometimes called \emph{quasi-classical energy}. Assuming indeed that the field degrees of freedom are `almost classical', in the sense that the creation and annihilation operator $a^*$, $a$ in $\mathcal{H}_{\mathrm{f}}$ are rescaled as $a^*_\varepsilon=\sqrt{\varepsilon}a^*$, $a_\varepsilon=\sqrt{\varepsilon}a$, see~\cite{AmmariNier08}, one shows, under suitable assumptions, that the ground state energy of the rescaled Pauli-Fierz Hamiltonian~$\mathbb{H}_\varepsilon$ converges, as~$\varepsilon\to0$, to the ground state energy of the quasi-classical energy functional \eqref{eq:quasi-class-en},  \cite{CorreggiFalconi18, CorreggiFalconiOlivieri20, CorreggiFalconiOlivieri19_02,GinibreNironiVelo06}. In order for $\mathbb{H}_\varepsilon$ to identify to a semi-bounded self-adjoint operator on $\mathcal{H}$, an ultraviolet cutoff is usually imposed to the interaction Hamiltonian.

As we  recall below, the quasi-classical energy \eqref{eq:quasi-class-en} coincides with the energy of a coupled \emph{Klein--Gordon--Schr\"odinger} system.  The variational and dynamical aspects of Klein--Gordon--Schr\"odinger  systems have been studied in the recent mathematical literature (see \cite{AmmariFalconi14,AmmariFalconi17,AmmariFalconiOlivieri21,CorreggiFalconi18, CorreggiFalconiOlivieri20, CorreggiFalconiOlivieri19_02,Falconi13}), as quasi-classical limits of Pauli-Fierz models. Existence of a ground state and of a dynamics associated to the non-linear energy functional \eqref{eq:quasi-class-en} constitute essential ingredients of the analysis. 

In this paper, under general conditions, we prove the existence of a ground state for the quasi-classical energy functional $\mathcal{E}(u,f)$ on its natural energy space. In particular, we consider a wide class of external potentials and we do not need to impose \emph{neither an infrared nor an ultraviolet cutoff} into the interaction term of $\mathcal{E}(u,f)$.

For small coupling, we verify that the ground state of $\mathcal{E}(u,f)$ is unique. In general, the field parameter $f_{\mathrm{gs}}$ of the ground state $(u_{\mathrm{gs}},f_{\mathrm{gs}})$ does not necessarily belong to the orignal one-particle Hilbert space $\mathfrak{h}$. For massless fields, we will see that~$f_{\mathrm{gs}}$ belongs to~$\mathfrak{h}$ if and only if an infrared regularization is imposed. On the other hand, no ultraviolet cutoff is needed: Denoting by $\Lambda$ the ultraviolet parameter associated to the ultraviolet cutoff introduced into the interaction Hamiltonian, our results show that both the ground states and ground state energies associated to \eqref{eq:quasi-class-en} converge as $\Lambda\to\infty$.

We also study the difference between the ground state energy for the microscopic model and its quasi-classical counterpart,
\begin{equation}\label{eq:diff_en}
\inf \sigma(\mathbb{H}) - \inf_{(u,f)}\mathcal{E}(u,f),
\end{equation}
where $\sigma(\mathbb{H})$ stands for the spectrum of the Pauli-Fierz Hamiltonian $\mathbb{H}$. The expansion up to second order in the coupling constant of this expression reveals that the ground state energy $\inf \sigma(\mathbb{H})$ can be divided into two terms: a `coherent term', given by $\inf_{(u,f)}\mathcal{E}(u,f)$, and a second term due to the contribution from the excited states of the electronic Hamiltonian. 

Our argument to prove the existence of a ground state relies on usual strategies from the calculus of variations. The main novelty comes from the possible absence of infrared and ultraviolet cutoffs in the interaction. This produces singular terms with a critical behavior  in the energy functional that we handle using, in particular, suitable estimates in Lorentz spaces. The use of weak versions of H\"older and Young's inequalities in Lorentz spaces seems to be new in the present context. It constitutes one of the main technical tools in our argument.

The ultraviolet convergence of the ground state and the asymptotic expansion of \eqref{eq:diff_en}  at small coupling also seem to be new. In order to establish them (as well as the uniqueness of the ground state), we project a non-linear eigenvalue equation associated to the minimization problem onto the vector space associated to the ground state of the electronic Hamiltonian and its orthogonal complement.

In a companion paper \cite{BreteauxFaupinPayet22}, we will study the Pauli-Fierz Hamiltonian of a non-relativistic particle with spin $\frac12$ coupled to the quantized radiation field in the standard model of non-relativistic QED. In this case, the quasi-classical energy coincides with the energy of a coupled Maxwell--Schr\"odinger system.

In the remainder of this section, we begin by introducing in Section \ref{subsec:elec} the abstract class of Hamiltonians we consider and our main hypotheses. Next, in Section \ref{subsec:linear}, we state our main results.

\medskip

\noindent \emph{Notations.}
We recall that for $1\le p<\infty$, the Lorentz spaces (or weak $L^p$ spaces) $L^{p,\infty}(\mathbb{R}^3)$ are defined as the set of (equivalence classes of) measurable functions $f:\mathbb{R}^3\to\mathbb{C}$ such that
\begin{equation}\label{eq:norm_weakLp}
\|f\|_{L^{p,\infty}}:=\sup_{t>0} \lambda \big( \{|f|>t\}\big) ^{\frac1p}\,t 
\end{equation}
is finite, where $\lambda$ denotes Lebesgue's measure.

The usual Fourier transform acting on tempered distribution is denoted by $\mathcal{F}$ with inverse~$(2\pi)^{-3}\bar{\mathcal F}$. (We use the normalization $\mathcal F(f)(x)=\int_{\mathbb{R}^3} e^{-ix\cdot\xi} f(\xi) \diff \xi$ for~$f$ in~$L^1(\mathbb{R}^3)$, and hence $\bar{\mathcal F}(f)(x)=\int_{\mathbb{R}^3} e^{ix\cdot\xi} f(\xi) \diff \xi$. This normalization is not the standard one but it is convenient in our context.) Throughout the paper, we use the following convention. Let $f,g$ be functions associated to tempered distributions. Assume that $\FF(g)$ identifies with a function such that~$f\FF(g)$ can be associated to a tempered distribution. We write
\begin{equation}\label{eq:convfourier}
\FF(f)*g := (2\pi)^{-3} \FF( f \bar{\FF}(g) ).
\end{equation}
This convention extends the well-known equality which holds e.g.~if $f$ and~$g$ are in~$L^1$ or $f$ is in~$L^2$ and $g$ in~$L^1$.

In several places, we will use localization functions in ${C}_0^\infty(\mathbb{R}^3)$ denoted by $\eta$ and such that~$0\le\eta\le1$, $\eta(x)=1$ if $|x|\le1$ and $\eta(x)=0$ if $|x|\ge2$. We define the non-negative function~$\tilde{\eta}$ by 
\begin{equation*}
\eta^2+\tilde{\eta}^2=1,
\end{equation*}
and for all $R>0$, we set 
\begin{equation}\label{eq:defetaR}
\eta_R(x):=\eta(x/R) \quad\text{and} \quad\tilde{\eta}_R(x):=\tilde{\eta}(x/R).
\end{equation}

If $\mathcal{H}_1$, $\mathcal{H}_2$ are two Hilbert spaces, $\mathcal{L}(\mathcal{H}_1,\mathcal{H}_2)$ stands for the set of bounded linear operators from $\mathcal{H}_1$ to $\mathcal{H}_2$. Given a linear operator $A$ on a Hilbert space $\mathcal{H}$, we denote by $\mathcal{D}(A)$ its domain and $\mathcal{Q}(A)$ its form domain.

\subsection{Model and assumptions}\label{subsec:elec}

Before defining the abstract class of linearly coupled Pauli-Fierz Hamiltonians we consider, we introduce our conditions on the electronic Hamiltonian~$H_{V}$.

\subsubsection{The electronic Hamiltonian}

We suppose that the non-relativistic particle is spinless and bound by an external potential. The Hilbert space and Hamiltonian for the particle are given by
\begin{equation}\label{eq:HV}
\mathcal{H}_{\mathrm{el}}:=L^2(\mathbb{R}^3), \quad H_V=-\Delta+V(x),
\end{equation}
where $V:\mathbb{R}^3\to\mathbb{R}$ is a real potential. We display the dependence on $V$ since one of our main hypotheses (see Hypothesis \ref{condV}) assumes the existence of a decomposition $V=V_1+V_2$ such that $V_1\ge0$, $V_2$ vanishes at $\infty$ and there is a gap between the ground state energies of $H_V$ and $H_{V_1}$.

The main examples we have in mind are confining potentials, $V(x)\to\infty$ as $|x|\to\infty$, and Coulomb-type potentials, $V(x)=-c|x|^{-1}$ with $c>0$. We introduce general hypotheses on~$V$ that are fulfilled by a large class of potentials, including the two preceding examples. As we will see below, some of our main results have interesting consequences in special cases, especially when $V$ is confining.

We set
\begin{equation*}
\mu_V:=\inf\sigma(H_{V}), 
\end{equation*}
and likewise if $V$ is replaced by another potential. For $U:\mathbb{R}^3\to\mathbb{R}$, we denote by
\begin{equation*}
U_+:=\max(U,0),\quad U_-:=\max(-U,0),
\end{equation*}
the positive and negative parts of $U$, respectively, so that $U=U_+-U_-$. 

We make the following hypothesis.
\begin{assumption}[Conditions on $V$]\label{condV}
There exist $0\le a<1$ and $b$ in~$\mathbb{R}$ such that the negative part of $V$ satisfies
\begin{equation*}
V_{-}\leq -a\Delta+b\,,
\end{equation*}
in the sense of quadratic forms on $H^1(\mathbb{R}^3)$. Moreover, $V$ decomposes as $V=V_{1}+V_{2}$ with
\begin{enumerate}[label=(\roman*)]
\item $V_{1}\in L_{\mathrm{loc}}^{1}(\mathbb{R}^{3};\mathbb{R}^{+})$, \vspace{0,2cm}
\item $V_{2}\in L_{\mathrm{loc}}^{3/2}(\mathbb{R}^{3};\mathbb{R})$ and ${\displaystyle \lim_{|x|\to\infty}V_{2}(x)=0}$,
\item $\mu_V<\mu_{V_1}$. 
\end{enumerate}
\end{assumption}
We have the following accompanying remarks (we refer to Section~\ref{subsec:Estimate-elec} for justifications). Since $V_+\ge0$, $H_{V_+}=-\Delta+V_+$ identifies with a non-negative self-adjoint operator on~$L^2(\mathbb{R}^3)$ with form domain
\begin{equation*}
\mathcal{Q}(H_{V_+}) = \mathcal{Q}(-\Delta)\cap \mathcal{Q}(V_+)=\Big \{ u \in H^1(\mathbb{R}^3) \mid \int_{\mathbb{R}^3} V_+(x) |u(x) |^2 \diff x < + \infty \Big \}.
\end{equation*} 
Moreover, it follows from our hypotheses that $H_V$ identifies with a semi-bounded self-adjoint operator with form domain 
\begin{equation*}
\mathcal{Q}_V:=\mathcal{Q}(H_V) = \mathcal{Q}(H_{V_+}) = \mathcal{Q}(H_{V_1}) \, ,
\end{equation*}
and that $\mathcal{Q}_V$ is a Hilbert space for the norm
\begin{equation}\label{eq:normQ}
\|u\|^2_{\mathcal{Q}_V}:=\|u\|^2_{H^1}+\big\|(V_+)^{\frac12}u\big\|^2_{L^2} \, .
\end{equation}
In particular, $\mu_V$ and $\mu_{V_1}$ are well-defined. 
We will most of the time consider a state $u$ in
\begin{equation}\label{eq:defU_intro}
\mathcal{U} := \{u\in\mathcal{Q}_V\mid \|u\|_{L^2}=1\} \,.
\end{equation}

In order to obtain uniqueness of minimizers, we require that the Schr\"odinger Hamiltonian~$H_V$ has a unique ground state.  By Perron-Froebenius arguments, it is well-known that, under suitable conditions on $V$, if $\mu_V$ is an eigenvalue of $H_V$ then it is simple and there exists a corresponding strictly positive eigenstate (see e.g.~\cite[Theorems XIII.46 and XIII.48]{ReedSimonII}). We make the following related hypothesis.
\begin{assumption}[Ground state of $H_V$]\label{condGS} The ground state energy $\mu_V$ of the particle Hamiltonian $H_V$ is a simple isolated eigenvalue associated to a unique positive ground state $u_V$ in~$L^2(\mathbb{R}^3;\mathbb{R}_+)$, such that $\|u_V\|_{L^2}=1$.
\end{assumption}

The orthogonal projection onto the vector space spanned by $u_V$ is denoted by~$\Pi_V$. We also set $\Pi_V^\perp:=\mathbf{I}-\Pi_V$.

\subsubsection{Linearly coupled Pauli-Fierz Hamiltonians}
We suppose that the radiation field is a scalar, bosonic field with Hilbert space given by the symmetric Fock space
\begin{equation}\label{eq:FockSpace}
\mathcal{H}_{\mathrm{f}}:=\F(L^2(\R^3))=\bigoplus_{n=0}^\infty\bigvee^nL^2(\R^3).
\end{equation}
In the momentum representation, the free field Hamiltonian is the second quantization of the multiplication operator by $\omega(k)$,
\begin{equation}
\mathbb{H}_{\mathrm{f}}:=\mathrm{d}\Gamma(\omega(k)),
\end{equation}
where $\omega:\mathbb{R}^3\to\mathbb{R}_{+}$ is a non-negative measurable function. See Appendix \ref{app:Fock} for the precise definition of second quantized operators. The coupling between the electron and the field is linear in the creation and annihilation operators, given by
\begin{equation}
\mathbb{H}_{\mathrm{int}}:=g\sqrt{2} \Phi(h_x),
\end{equation}
where $g$ in~$\mathbb{R}$ is a coupling constant, $\Phi(h)$, for $h$ in~$L^2(\mathbb{R}^3)$, denotes the field operator 
(see Appendix \ref{app:Fock} for the definitions of the field, creation and annihilation operators
), and
\begin{equation}
 h_x(k):=v(k)e^{-ikx},
 \end{equation}
for all $x$ in~$\mathbb{R}^3$, where $v:\mathbb{R}^3\to\mathbb{R}$ is a coupling function. 

This framework covers several models of interest:
\begin{itemize}
\item The Nelson model \cite{Nelson64}, with the relativistic dispersion relation $\omega(k):=\sqrt{k^2+m^2}$ corresponding to a field of mass $m\ge0$, and the coupling function $v(k)=\omega(k)^{-\frac12}\chi(k)$ with $v$ in~$L^2(\mathbb{R}^3)$. Here, in particular,~$\chi$ incorporates an ultraviolet cutoff. Moreover, without infrared regularization, $\chi=1$ near $k=0$, while if an infrared regularization is imposed, one assumes that $\chi(0)=0$.
\item The Fr\"ohlich polaron model \cite{Frohlich39}, with $\omega(k)=1$ and $
v(k)=|k|^{-1}$.
\item The phonon Hamiltonian of solid state physic (see e.g.~\cite{Kittel63}), with a bounded dispersion relation  $\omega:\mathbb{R}^3\to\mathbb{R}$ such that $\omega(k)\sim c|k|$ near $k=0$ in the case of acoustic phonons, or~$0<c_1\le\omega(k)\le c_2$ in the case of optical phonons. Here $c,c_1,c_2$ are positive constants. Moreover, $v(k)=|k|^{\frac12}\chi(k)$, with $v$ in~$L^2(\mathbb{R}^3)$ and $\chi=1$ near $k=0$.
\end{itemize}

Assuming that $V$ satisfies Hypothesis \ref{condV} and that
\begin{equation}\label{eq:condv_sa}
\int_{\R^3}\frac{v^2}{\omega}
<\infty,
\end{equation}
it is not difficult to verify that, for all values of the coupling constant $g$, the total Hamiltonian
\begin{equation}\label{eq:general_PF}
\mathbb{H}=H_V\otimes\mathbf{I}_{\mathrm{f}}+\mathbf{I}_{\mathrm{el}}\otimes \mathbb{H}_{\mathrm{f}}+g\sqrt{2}\Phi(h_x),
\end{equation}
is a semi-bounded self-adjoint operator with form domain
\begin{equation}\label{eq:Hfree}
\mathcal{Q}(\mathbb{H})=\mathcal{Q}(\mathbb{H}_{\mathrm{free}}), \quad \mathbb{H}_{\mathrm{free}}:=H_V\otimes\mathbf{I}_{\mathrm{f}}+\mathbf{I}_{\mathrm{el}}\otimes \mathbb{H}_{\mathrm{f}}.
\end{equation}
See Appendix \ref{app:Fock} for details. The domains of $\mathbb{H}$ and $\mathbb{H}_\mathrm{free}$ in fact also coincide in this case. Note that the condition \eqref{eq:condv_sa} is satisfied in the case of the Nelson model and the phonon model, but not for the polaron model. In the latter case, one can still prove that $\mathbb{H}$ identifies with a self-adjoint operator with form domain $\mathcal{Q}(\mathbb{H})=\mathcal{Q}(\mathbb{H}_{\mathrm{free}})$, see \cite{GriesemerWunsch16}.

\subsubsection{Klein--Gordon--Schr\"odinger energy}

For $f$ in $L^2(\mathbb{R}^3)$, the coherent state of parameter $f$ is denoted by
\begin{equation*}
\Psi_f
:=e^{i\Phi(\frac{\sqrt{2}}{i} f )}\Omega \in\mathcal{H}_{\text{f}} \,,
\end{equation*}
where $\Omega$ stands for the Fock vacuum. Let $u$ in~$\mathcal{U}$ and let $f$ in~$L^2(\R^3)$ be such that
$\omega^{1/2}f$ belongs to~$L^2(\R^3).$ A simple computation shows that the energy of the product state $u\otimes \Psi_f$  is  given by
\begin{align}
 \big \langle (u\otimes \Psi_f),\mathbb{H}(u\otimes \Psi_f) \big \rangle_{\mathcal{H}} 
=   \mathcal{E}(u,f) \label{eq:cE}
\end{align}
(see Appendix \ref{app:Fock}) where
\begin{align}
 \mathcal{E}(u,f)&:= \int_{\mathbb{R}^3}|\vec{\nabla} u(x)|^2\diff x+\int_{\mathbb{R}^3}V(x)|u(x)|^2\diff x+\int_{\mathbb{R}^3}\omega(k)|f(k)|^2\diff k\notag\\
&\quad+2g\,\Real\int_{\R^6}  e^{ikx}v(k)f(k) |u(x)|^2 \diff x\diff k \,. \label{eq:cE2}
\end{align}
Hence we obtain the energy of a coupled Klein--Gordon--Schr\"odinger system, the coupling being given by the last term in~\eqref{eq:cE2}.

We aim at proving the existence and uniqueness of a minimizer for the energy functional~$\mathcal{E}$, under suitable assumptions on $\omega$ and $v$.  
%
The natural energy space for $\mathcal{E}(u,f)$ is $\mathcal{U}\times\mathcal{Z}_\omega$
where
\begin{equation*}
\ZZ_{\omega}:=\left\{f:\R^3\rightarrow\C\;\text{measurable} \mid \omega^{1/2}f\in L^2(\R^3, \diff k)\right\}.
\end{equation*}  
We make the following hypothesis on $v$ which, combined with Hypothesis \ref{condV}, ensures that~$\mathcal{E}$ is well-defined  on $\mathcal{U}\times\mathcal{Z}_\omega$ (see Proposition \ref{prop:KGS-Hartree} and Lemma \ref{lm:JV} below). Recall also that $v$ is real-valued and $\omega$ is supposed to be a non-negative measurable function. 
\begin{assumption}[Condition on $v$]\label{condv0:2a}
The map $W:=g^2\omega^{-1}v^2$ decomposes as $W=W_1+W_2$ with
\begin{enumerate}[label=(\roman*)]
\item $W_1\in L^1(\mathbb{R}^3)$,\vspace{0,1cm}
\item $W_2\in L^{3,\infty}(\mathbb{R}^3)$.
\end{enumerate}
\end{assumption}

It should be noted that this hypothesis covers all the examples previously mentioned (the Nelson, polaron and phonons Hamiltonians). Indeed, Hypothesis \ref{condv0:2a} is satisfied if one assumes that $\omega^{-1}v^2$ belongs to $L^1_{\mathrm{loc}}(\mathbb{R}^3)$ and that
\begin{equation*}
|\omega^{-1}(k)v^2(k)|\le {C}_1|k|^{-3+\varepsilon}\mathds{1}_{|k|\le1}+{C}_2|k|^{-1}\mathds{1}_{|k|\ge1}, \quad \varepsilon>0,
\end{equation*}
for some positive constants ${C}_1$, ${C}_2$. This follows from the facts that $|k|^{-3+\varepsilon}\mathds{1}_{|k|\le1}$ is in~$L^1(\mathbb{R}^3)$ while $|k|^{-1}\mathds{1}_{|k|\ge1}$ is in~$L^{3,\infty}(\mathbb{R}^3)$. We emphasize in particular that, for the Nelson model, no infrared regularization is required and the ultraviolet cutoff can be removed, taking~$v(k)=|k|^{-1/2}$ and $\omega(k)=\sqrt{k^2+m^2}$, $m\ge0$.

\subsection{Main results}\label{subsec:linear} We are  now ready to state our main results. For clarity we decompose the presentation into a few subsections.

\subsubsection{Infrared problem for Klein--Gordon--Schr\"odinger}

We begin with a relatively simple property, which we refer to as the `infrared problem', keeping the usual terminology from QED. Since, in general, $\mathcal{Z}_\omega$ is not contained in $L^2(\mathbb{R}^3)$, the field component $f_{\mathrm{gs}}$ of a minimizer~$(u_{\mathrm{gs}}, f_{\mathrm{gs}})$ of the Klein--Gordon--Schr\"odinger energy functional over~$\mathcal{U}\times\mathcal{Z}_\omega$ may not belong to the original one-particle Hilbert space $\mathfrak{h}=L^2(\mathbb{R}^3)$. This formally corresponds to the fact that the coherent state $\Psi_{f_{\mathrm{gs}}}$ does not belong to Fock space. On the other hand, if $f_{\mathrm{gs}}$ belongs to~$L^2(\mathbb{R}^3)$, then, using in addition that $u_{\mathrm{gs}}$ belongs to~$\mathcal{Q}_V$ and $f_{\mathrm{gs}}$ to~$\mathcal{Z}_\omega$, one easily verifies that~$u_{\mathrm{gs}}\otimes\Psi_{f_\mathrm{gs}}$ belongs to~$\mathcal{Q}(\mathbb{H})$, so that the Pauli-Fierz energy \eqref{eq:cE} in the state $u_{\mathrm{gs}}\otimes\Psi_{f_\mathrm{gs}}$ is well-defined.

The next proposition provides both a necessary and a sufficient condition ensuring that~$f_{\mathrm{gs}}$ belongs to $L^2(\mathbb{R}^3)$. 
\begin{proposition}\label{prop:condfL2}
Suppose that $V$ satisfies Hypothesis~\ref{condV} and~$W$ satisfies Hypothesis~\ref{condv0:2a}. If~$(u_{\mathrm{gs}},f_{\mathrm{gs}})$ is a minimizer of the Klein--Gordon--Schr\"odinger energy functional over~$\mathcal{U}\times\mathcal{Z}_\omega$\,, then
\begin{equation*}
\frac{v}{\omega} \mathds{1}_{|k|\le1}+\frac{v}{|k|\omega} \mathds{1}_{|k|\ge1} \in L^2(\R^3) \, \Longrightarrow \, f_{\mathrm{gs}}\in L^2(\mathbb{R}^3) \, \Longrightarrow \,  \frac{v}{\omega} \mathds{1}_{|k|\le1} \in L^2(\R^3).
\end{equation*}
\end{proposition}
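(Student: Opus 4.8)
The plan is to exploit the fact that, with the electronic component $u$ held fixed, the map $f\mapsto\mathcal{E}(u,f)$ is an affine-quadratic functional of $f$ on $\mathcal{Z}_\omega$, so that the field component of a minimizer is pinned down by an explicit Euler--Lagrange relation, and then to read off the two implications from elementary properties of a Fourier transform. Concretely, set $\rho:=|u_{\mathrm{gs}}|^2$ and $\hat\rho(k):=\int_{\R^3}e^{-ik\cdot x}\rho(x)\,\diff x$; since $u_{\mathrm{gs}}\in\mathcal{U}$ we have $\rho\in L^1(\R^3)$ with $\int_{\R^3}\rho=1$, so $\hat\rho$ is well defined, bounded and continuous. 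Rewriting the coupling term in~\eqref{eq:cE2} as $2g\,\Real\int_{\R^3}v(k)f(k)\overline{\hat\rho(k)}\,\diff k$, the functional $f\mapsto\mathcal{E}(u_{\mathrm{gs}},f)$, which $f_{\mathrm{gs}}$ minimizes over $\mathcal{Z}_\omega$, equals a constant plus $\int_{\R^3}\big(\omega|f|^2+2g\,\Real(v\overline{\hat\rho}\,f)\big)$; minimizing pointwise in $k$ (and observing that a minimizer can exist only if $v=0$ almost everywhere on $\{\omega=0\}$, where the integrand vanishes identically) yields $f_{\mathrm{gs}}=-g\,\omega^{-1}v\,\hat\rho$ almost everywhere. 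Hence $f_{\mathrm{gs}}\in L^2(\R^3)$ \emph{if and only if} $\omega^{-1}v\,\hat\rho\in L^2(\R^3)$, and everything reduces to analysing this last condition.

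The two properties of $\hat\rho$ that I would extract are the following. First, $\hat\rho(0)=\int_{\R^3}\rho=1$ and $\hat\rho$ is continuous, so there is $\delta\in(0,1]$ with $|\hat\rho(k)|\ge\tfrac12$ for $|k|\le\delta$. Second, $\rho\in W^{1,1}(\R^3)$: indeed $u_{\mathrm{gs}}\in\mathcal{Q}_V\subset H^1(\R^3)$ and $\nabla\rho=2\,\Real(\overline{u_{\mathrm{gs}}}\,\nabla u_{\mathrm{gs}})$, so $\|\nabla\rho\|_{L^1}\le 2\|u_{\mathrm{gs}}\|_{L^2}\|\nabla u_{\mathrm{gs}}\|_{L^2}=:M<\infty$, whence $|k|\,|\hat\rho(k)|\le M$ for all $k$. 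Thus $|\hat\rho(k)|\le\min\{1,\,M|k|^{-1}\}$; the decay at large $|k|$ is exactly what makes $\omega^{-1}v\hat\rho$ harmless there, while $\hat\rho(0)\ne0$ is what makes the infrared condition near $k=0$ sharp.

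For the first implication, assume the left-hand condition and split $\R^3$ into $\{|k|\le1\}$ and $\{|k|\ge1\}$: on the former $|\omega^{-1}v\hat\rho|\le\omega^{-1}|v|$, and on the latter $|\omega^{-1}v\hat\rho|\le M\,\omega^{-1}|v|\,|k|^{-1}$; the hypothesis asserts precisely that each of these functions lies in $L^2$, so $\omega^{-1}v\hat\rho\in L^2$ and $f_{\mathrm{gs}}\in L^2$. For the converse, assume $\omega^{-1}v\hat\rho=-g^{-1}f_{\mathrm{gs}}\in L^2$. On $\{|k|\le\delta\}$ we get $\omega^{-1}|v|\le 2\,|\omega^{-1}v\hat\rho|\in L^2$. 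On the annulus $\{\delta\le|k|\le1\}$, Hypothesis~\ref{condv0:2a} gives $g^2\omega^{-1}v^2=W_1+W_2$ with $W_1\in L^1$ and $W_2\in L^{3,\infty}$; since this annulus has finite measure, $W_2$ restricted to it belongs to $L^1$, so $\omega^{-1}v^2\in L^1(\{\delta\le|k|\le1\})$, and because $\omega$ is bounded below by a positive constant on this annulus (as is the case in every model considered here) this improves to $\omega^{-2}v^2\in L^1(\{\delta\le|k|\le1\})$, that is $\omega^{-1}|v|\in L^2$ there. Combining the two regions gives $\omega^{-1}v\,\mathds{1}_{|k|\le1}\in L^2(\R^3)$, as claimed.

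The only step requiring genuine care is the derivation of the Euler--Lagrange identity $f_{\mathrm{gs}}=-g\,\omega^{-1}v\,\hat\rho$: one must justify the pointwise minimization in $f$ and dispose of the (harmless) set $\{\omega=0\}$, and one must check that $\rho=|u_{\mathrm{gs}}|^2$ inherits enough regularity from $u_{\mathrm{gs}}\in H^1$, namely $\rho\in W^{1,1}$ so that $\hat\rho$ is continuous with $|k|\,|\hat\rho(k)|$ bounded. The treatment of the intermediate shell $\delta\le|k|\le1$ is routine, using only that $\omega$ does not degenerate away from the origin.
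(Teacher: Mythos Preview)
Your argument follows the same route as the paper's: derive the explicit formula $f_{\mathrm{gs}}=-g\,\omega^{-1}v\,\mathcal{F}(|u_{\mathrm{gs}}|^2)$ by minimizing in $f$, then read off both implications from the elementary bounds $|\hat\rho(k)|\lesssim\min(1,|k|^{-1})$ (coming from $u_{\mathrm{gs}}\in H^1$, hence $\rho\in W^{1,1}$) and $|\hat\rho(k)|\ge\tfrac12$ in a neighbourhood of the origin (from continuity and $\hat\rho(0)=1$).

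The one substantive difference is your treatment of the shell $\{\delta\le|k|\le1\}$ in the second implication. You correctly notice that the lower bound on $|\hat\rho|$ is only available on $\{|k|\le\delta\}$, and you bridge the gap by invoking that $\omega$ is bounded away from zero on the annulus --- an assumption not contained in the stated hypotheses (which only require $\omega\ge0$ measurable), though it holds in every concrete model the paper treats. The paper's own proof simply asserts the necessary condition with cutoff at~$1$ after having controlled only a neighbourhood of~$0$, so you have in fact been more scrupulous on this point than the original; your patch is the natural one for the intended applications, but it is worth flagging that it goes slightly beyond the abstract hypotheses as written.
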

Considering the massless Nelson model where $\omega(k)=|k|$ and $v(k)=|k|^{-1/2}\chi(k)$, the previous conditions reduce to 
\begin{equation*}
k\mapsto |k|^{-5/2}\chi(k) \mathds{1}_{|k|\ge1} \in L^2(\R^3)\quad\text{and}\quad k\mapsto |k|^{-3/2}\chi(k) \mathds{1}_{|k|\le1} \in L^2(\R^3).
\end{equation*}
Thus, in order to have that $f_{\mathrm{gs}}$ belongs to~$L^2(\R^3)$, it is necessary to impose an infrared regularization, but no ultraviolet regularization is needed. Note that the presence of an infrared regularization is also necessary to have the existence of a ground state for the massless Nelson Hamiltonian \cite{LorincziMinlosSpohn02,DerezinskiGerard04,Gerard03}, while it is well-known that the Nelson Hamiltonian is renormalizable in the ultraviolet limit \cite{Nelson64}.

For the Fr\"ohlich polaron model, we have $\omega=1$, hence $\mathcal{Z}_\omega=L^2(\mathbb{R}^3)$ and the previous proposition is trivial.

\subsubsection{Existence of a ground state} One of our main results is the following theorem which provides the existence of a ground state for the Klein--Gordon--Schr\"odinger energy functional under our general assumptions on  $V$ and $W$.
\begin{theorem}[Existence of a ground state]\label{thm:mainlnc}
Suppose that $V$ satisfies Hypothesis \ref{condV} and $W$ satisfies Hypothesis \ref{condv0:2a}. 
There exists $C_V>0$ such that, if the decompositions $V=V_1+V_2$ and~$W=W_1+W_2$ as in Hypotheses \ref{condV} and \ref{condv0:2a}, respectively, can be chosen such that
\begin{equation}\label{eq:condv:0}
\|W_1\|_{L^1}+C_V\|W_2\|_{L^{3,\infty}}\le \delta(\mu_{V_1}-\mu_V)
\end{equation}
and
\begin{equation}\label{eq:smallness}
C\|W_2\|_{L^{3,\infty}}\le \frac12(1-a),
\end{equation}
for some universal constants $C,\delta>0$ and where $a$ is given by Hypothesis \ref{condV}, then the Klein--Gordon--Schr\"odinger energy functional \eqref{eq:cE2} has a minimizer over~$\mathcal{U}\times\mathcal{Z}_\omega$.
\end{theorem}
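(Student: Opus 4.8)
The plan is to run the standard direct method of the calculus of variations, with the main work being to show that the infimum is finite and that minimizing sequences are precompact in a suitable weak topology, despite the singular coupling term and the lack of infrared/ultraviolet cutoffs. First I would fix the field variable: for a given $u\in\mathcal U$, the functional $f\mapsto\mathcal E(u,f)$ is a (strictly convex) quadratic in $f$ on the Hilbert space $\mathcal Z_\omega$ equipped with the norm $\|\omega^{1/2}f\|_{L^2}$, so its minimizer is explicit, $f_u(k)=-g\,\omega(k)^{-1}v(k)\,\overline{\FF(|u|^2)}(k)$ (up to the Fourier conventions fixed in the excerpt), and substituting back gives a reduced electronic functional
\begin{equation*}
\mathcal J_V(u):=\inf_{f\in\mathcal Z_\omega}\mathcal E(u,f)=\int_{\R^3}|\vec\nabla u|^2+\int_{\R^3}V|u|^2-\int_{\R^3}\big(W*|u|^2\big)|u|^2,
\end{equation*}
a Choquard/Hartree-type functional with convolution kernel $W=g^2\omega^{-1}v^2$ decomposing as $W_1+W_2$ with $W_1\in L^1$ and $W_2\in L^{3,\infty}$. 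Thus it suffices to minimize $\mathcal J_V$ over $\mathcal U$ and then recover $(u_{\mathrm{gs}},f_{u_{\mathrm{gs}}})$; I expect Proposition~\ref{prop:KGS-Hartree} and Lemma~\ref{lm:JV} to supply exactly this reduction and the basic bounds.

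Next I would establish coercivity and lower semicontinuity. The dangerous term is $\int (W_2*|u|^2)|u|^2$; here is where the Lorentz-space machinery enters. By the weak Young inequality, convolution with $W_2\in L^{3,\infty}$ maps $L^{3/2,1}\to L^{3}$, and by weak Hölder, $|u|^2\in L^{3/2,1}$ whenever $u\in L^{2}\cap L^{6}$; combining with Sobolev $\dot H^1\hookrightarrow L^6$ and interpolation yields a bound of the form
\begin{equation*}
\Big|\int_{\R^3}\big(W_2*|u|^2\big)|u|^2\Big|\le C\|W_2\|_{L^{3,\infty}}\,\|\vec\nabla u\|_{L^2}^2\,\|u\|_{L^2}^2,
\end{equation*}
while $W_1\in L^1$ contributes $\le\|W_1\|_{L^1}\|u\|_{L^4}^4\le\epsilon\|\vec\nabla u\|^2+C_\epsilon$ by Gagliardo–Nirenberg on the unit sphere of $L^2$. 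Together with the form bound $V_-\le -a\Delta+b$ from Hypothesis~\ref{condV}, the smallness condition \eqref{eq:smallness} $C\|W_2\|_{L^{3,\infty}}\le\frac12(1-a)$ ensures that $\mathcal J_V(u)\ge \frac12(1-a)\|\vec\nabla u\|_{L^2}^2+\|(V_+)^{1/2}u\|_{L^2}^2-\text{const}$, so $\mathcal J_V$ is bounded below and coercive on $\mathcal Q_V$; moreover $\mu_V\le\inf_{\mathcal U}\mathcal J_V<\infty$ trivially (test with $u_V$, the coupling term only lowers the energy), giving a finite infimum.

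Then I would take a minimizing sequence $(u_n)$, extract a weak limit $u$ in $\mathcal Q_V$ (bounded by coercivity), and control the loss of mass at infinity. This is the genuine obstacle and the reason conditions \eqref{eq:condv:0} are needed: because $V_2$ only vanishes at infinity and $W_2$ is merely in $L^{3,\infty}$ (scaling-critical), mass can in principle escape to spatial infinity where the effective potential felt by an escaping lump is governed by $H_{V_1}$ with a residual Hartree self-interaction. Using the IMS-type localization functions $\eta_R,\tilde\eta_R$ from the excerpt, I would split $\mathcal J_V(u_n)\ge\mathcal J_V(\eta_R u_n)+\mathcal J_{V_1}(\tilde\eta_R u_n)+o(1)+o_R(1)$, where the cross terms in the convolution are handled again by the weak Young/Hölder estimates and go to zero as $R\to\infty$ uniformly in $n$ (this uses precisely the $W_2$ part being small and the $W_1$ part being $L^1$). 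If a fraction $\theta\in(0,1)$ of the mass escaped, one would get the binding-type inequality $\inf\mathcal J_V\ge \theta^{?}\cdots+\mu_{V_1}(1-\theta)+\cdots$, and the strict-inequality hypothesis $\mu_V<\mu_{V_1}$ together with the smallness \eqref{eq:condv:0} relative to the gap $\mu_{V_1}-\mu_V$ rules this out, forcing $\|u\|_{L^2}=1$. Finally, weak lower semicontinuity of the quadratic part plus strong $L^p_{\mathrm{loc}}$ convergence (Rellich) and the no-escape property upgrade $u_n\to u$ strongly enough in $\mathcal Q_V$ (and in $L^4$, via the $L^{3/2,1}$ bound and equi-integrability) to pass to the limit in the nonlinear term, so $\mathcal J_V(u)\le\liminf\mathcal J_V(u_n)=\inf\mathcal J_V$, i.e. $u$ is a minimizer; setting $f_{\mathrm{gs}}:=f_u\in\mathcal Z_\omega$ produces the desired minimizer of $\mathcal E$ over $\mathcal U\times\mathcal Z_\omega$. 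The hardest point, as indicated, is making the concentration–compactness splitting quantitatively compatible with the critical Lorentz estimates so that the escaping-mass scenario is excluded by \eqref{eq:condv:0}–\eqref{eq:smallness}.
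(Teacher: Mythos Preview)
Your proposal is correct and follows essentially the same route as the paper: reduce to the Hartree functional via Proposition~\ref{prop:KGS-Hartree}, establish coercivity from the Lorentz estimates (the paper's Lemma~\ref{lm:convol-ineq}) under \eqref{eq:smallness}, then run the argument of Appendix~\ref{app:Hartree} (gap $E_V<E_{V_1}$, IMS localization, boundedness of minimizing sequences in $\mathcal Q_V$, no mass escape, lower semicontinuity). Two small corrections: the physical-space convolution kernel is $\bar{\mathcal F}(W)$, not $W$ itself, so for the $W_1$-part one has $\bar{\mathcal F}(W_1)\in L^\infty$ and the bound is simply $\|W_1\|_{L^1}\|u\|_{L^2}^4=\|W_1\|_{L^1}$ on $\mathcal U$ (no Gagliardo--Nirenberg needed); and the paper excludes mass escape by comparing the \emph{Hartree} infima $E_V<E_{V_1}$ (established in Step~1 from $\mu_V<\mu_{V_1}$ and \eqref{eq:condv:0}) rather than the linear eigenvalues directly, which is what makes the localization inequality of Step~2 close.
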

We have the following accompanying remarks concerning the smallness conditions \eqref{eq:condv:0} and \eqref{eq:smallness}.

\begin{remark}\label{rk:intro_Nelson2}
The smallness condition \eqref{eq:smallness} only concerns the term $W_2$ in~$L^{3,\infty}$ of $W$, not the term $W_1$ in~$L^1$. Moreover, in the case of a confining potential, $V(x)\to\infty$ as $|x|\to\infty$, the condition \eqref{eq:condv:0} is automatically satisfied provided one suitably chooses the potential $V_1$, see Lemma \ref{lm:confining_intro}. This implies that if $V$ is confining and $W_2=0$, then a minimizer exists for any~$g$ in~$\R$. 
In fact, for the special case of a  confining potential, one can prove the existence of a minimizer by simpler arguments than those we use in the proof of Theorem \ref{thm:mainlnc}, since in this case the relative compactness of minimizing sequences can easily be deduced from the confining assumption. 
\end{remark}
\begin{remark}\label{rk:intro_Nelson1}
Our assumptions cover the critical case $\bar{\FF}(W)(x)=g^2|x|^{-2}$ of the Hartree equation \eqref{eq:Hartree_eq} (taking $W(k)=cg^2|k|^{-1}$ in~$L^{3,\infty}(\mathbb{R}^3)$), which has been studied e.g.~in \cite{GuoSeiringer14,DengLuShuai15}. In particular, with $\bar{\FF}(W)(x)=g^2|x|^{-2}$, it has been proven in \cite{GuoSeiringer14,DengLuShuai15} that the Hartree energy has no minimizer for $g$ larger than some critical value $g^*$. Hence the smallness condition \eqref{eq:smallness} in Theorem \ref{thm:mainlnc} cannot be removed.
\end{remark}

The proof of Theorem \ref{thm:mainlnc} follows from observing that $(u,f)$ is a minimizer for the Klein--Gordon--Schr\"odinger energy functional \eqref{eq:cE2} if and only if it is of the form $(u,f_u)$ where the field parameter satisfies~$f_u=-g\omega^{-1}v\FF(|u|^2)$ and where $u$ minimizes the \emph{Hartree energy}
\begin{align}\label{eq:Hartree_eq} 
 J(u)= \langle u,H_Vu\rangle_{L^2_x}-\int_{\mathbb{R}^3} \big(\bar{\FF}(W)*|u|^2\big)(x) |u(x)|^2 \diff x, \quad u \in \mathcal{U}.
\end{align}
Our strategy then rests on usual arguments from the calculus of variations \cite{LionsI84,LionsII84}. Existence of minimizers for the Hartree energy has been studied by many authors in different contexts (see e.g.~\cite{AmmariFalconiOlivieri21,AFGST02,GuoSeiringer14,DengLuShuai15,FrohlichLenzmann04,LewinNamRougerie14,Lieb77, LionsI84,LionsII84,GriesemerHantschWellig12} and references therein). We are not aware, however, of a result giving the existence of a minimizer under our general conditions on $V$ and $W$. The main difficulties come from the fact that we consider external potentials with possibly both a confining and a negative part, the latter vanishing at infinity, and, more importantly, that our assumptions on the convolution term in the Hartree energy \eqref{eq:Hartree_eq} concerns the Fourier transform of the usual pair potential, with possibly a critical behavior corresponding to the term $W_2$ in~$L^{3,\infty}$. Such critical terms are due to the fact that we do not impose an ultraviolet cutoff into the interaction. To handle them, we have to rely on suitable estimates in Lorentz spaces whose use, to our knowledge, seems to be new in the context of minimizing the Hartree energy functional.  For completeness, we provide a complete proof of the existence of a minimizer for \eqref{eq:Hartree_eq} under our conditions in Appendix~\ref{app:Hartree}.

We mention that the minimization problem for $\mathcal{E}(u,f)$ has been studied in the recent paper \cite{AmmariFalconiOlivieri21}, in the particular case of the massive Nelson model with $V$ confining, the dispersion relation $\omega(k)=\sqrt{k^2+m^2}$ and $v(k)=\omega(k)^{-1/2}\chi(k)$ with $\chi$ a smooth compactly supported function. Our results cover this particular case.

\subsubsection{Uniqueness of the ground state and expansion of the ground state energy at small coupling} Our next concern is the question of the uniqueness of the ground state for the Klein--Gordon--Schr\"odinger energy functional. To establish it, we need to strengthen our assumptions, assuming that the electronic Hamiltonian $H_V$ has a unique ground state as stated in Hypothesis \ref{condGS} and that the coupling is sufficiently small. Of course, uniqueness of a minimizer for $\mathcal{E}(u,f)$ only holds up to a phase, since $\mathcal{E}(u,f)=\mathcal{E}(e^{i\theta}u,f)$ for any $\theta$ in $\mathbb{R}$.
\begin{theorem}[Uniqueness of the ground state]\label{thm:mainlnc2}
Suppose that $V$ satisfies Hypotheses~\ref{condV} and \ref{condGS} and that $W$ satisfies Hypothesis \ref{condv0:2a}. There exists $\varepsilon_V>0$ such that, if
\begin{equation*}
\|W\|_{L^1+L^{3,\infty}}\le\varepsilon_V,
\end{equation*}
then the Klein--Gordon--Schr\"odinger energy functional \eqref{eq:cE2} has a unique minimizer $(u_{\mathrm{gs}},f_{\mathrm{gs}})$ in~$\mathcal{U}\times\ZZ_{\omega}$ such that $\langle u_{\mathrm{gs}},u_V\rangle_{L^2}>0$. 
\end{theorem}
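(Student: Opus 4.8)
The plan is to reduce the uniqueness question for $\mathcal{E}$ to uniqueness of the minimizer of the Hartree functional $J$ in \eqref{eq:Hartree_eq}. By the discussion following Theorem~\ref{thm:mainlnc}, every minimizer of $\mathcal{E}$ has the form $(u,f_u)$ with $f_u = -g\omega^{-1}v\,\FF(|u|^2)$ and $u$ a minimizer of $J$; conversely $f_u$ is determined by $u$. Moreover the constraint $\langle u_{\mathrm{gs}},u_V\rangle_{L^2}>0$ fixes the phase, so it suffices to show that for $\|W\|_{L^1+L^{3,\infty}}$ small, $J$ has a unique real nonnegative (indeed, having positive overlap with $u_V$) minimizer. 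The existence of a minimizer $u$ is already granted by Theorem~\ref{thm:mainlnc} (the smallness hypotheses there follow from $\|W\|_{L^1+L^{3,\infty}}\le\varepsilon_V$ upon choosing the decomposition appropriately and using that $C_V,\delta$ are fixed). A standard argument shows one may take $u\ge0$: replacing $u$ by $|u|$ does not increase $\langle u,H_Vu\rangle$ (diamagnetic/Hopf--type inequality, using $V_+$ form domain) and leaves the convolution term unchanged since it depends only on $|u|^2$; and by Hypothesis~\ref{condGS} plus a Perron--Frobenius argument the minimizer is in fact strictly positive, hence $\langle u,u_V\rangle_{L^2}>0$.

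Next I would derive the Euler--Lagrange equation for $J$. Any minimizer $u\in\mathcal{U}$ satisfies, for some Lagrange multiplier $\lambda\in\mathbb{R}$,
\begin{equation*}
H_V u - 2\big(\bar{\FF}(W)*|u|^2\big)u = \lambda u,
\end{equation*}
with $\lambda = J(u) - \int \bar{\FF}(W)*|u|^2\,|u|^2\,dx \le \mu_V$. The key structural point is that as $\|W\|_{L^1+L^{3,\infty}}\to0$, the nonlinear term is a small perturbation (controlled, via Proposition~\ref{prop:KGS-Hartree}/Lemma~\ref{lm:JV}, by $\|W\|_{L^1+L^{3,\infty}}\|u\|_{\mathcal{Q}_V}^2$ in form sense), so $\lambda$ is close to $\mu_V$ and, by Hypothesis~\ref{condGS}, stays in the isolated-eigenvalue regime: there is a spectral gap $\gamma>0$ between $\mu_V$ and the rest of $\sigma(H_V)$. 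I would then project the Euler--Lagrange equation onto $\mathbb{C}u_V$ and onto its orthogonal complement via $\Pi_V,\Pi_V^\perp$. Writing $u = \alpha u_V + \eta$ with $\eta=\Pi_V^\perp u$, the $\Pi_V^\perp$-component of the equation gives
\begin{equation*}
(H_V-\lambda)\eta = 2\,\Pi_V^\perp\big[\big(\bar{\FF}(W)*|u|^2\big)u\big],
\end{equation*}
and since $H_V-\lambda$ restricted to $\mathrm{Ran}\,\Pi_V^\perp$ is invertible with norm $\lesssim \gamma^{-1}$ (for $\lambda$ near $\mu_V$), one gets $\|\eta\|_{\mathcal{Q}_V}\lesssim \gamma^{-1}\|W\|_{L^1+L^{3,\infty}}\|u\|_{\mathcal{Q}_V}$, so $\eta$ is small and $u$ is close to $u_V$ in $\mathcal{Q}_V$. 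This sets up a fixed-point/contraction scheme: the map $\Phi$ sending $(\alpha,\eta)$ to the right-hand sides of the two projected equations (with $\alpha$ recovered from the normalization $\|u\|_{L^2}=1$ and the sign condition $\alpha>0$) is, for $\|W\|_{L^1+L^{3,\infty}}\le\varepsilon_V$ small enough, a contraction on a small ball around $(1,0)$ in $\mathbb{R}_+\times\mathcal{Q}_V$. Its unique fixed point is the unique minimizer with positive overlap with $u_V$.

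The two ingredients that make the contraction work are: (i) the bilinear/quadratic estimates for the convolution term in the $L^1+L^{3,\infty}$ norm, which should follow from the Lorentz-space Hölder and Young inequalities the authors advertise, giving Lipschitz bounds $\|\Phi(\alpha_1,\eta_1)-\Phi(\alpha_2,\eta_2)\|\lesssim \|W\|_{L^1+L^{3,\infty}}(|\alpha_1-\alpha_2|+\|\eta_1-\eta_2\|_{\mathcal{Q}_V})$ on the small ball; and (ii) the uniform spectral gap for $H_V-\lambda$ on $\mathrm{Ran}\,\Pi_V^\perp$, valid because $\lambda$ stays within $\gamma/2$ of $\mu_V$ once $\|W\|_{L^1+L^{3,\infty}}$ is small. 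One subtlety to address carefully is that a priori a minimizer need not be close to $u_V$ — it is only through the Euler--Lagrange equation and the a priori bound $\lambda\le\mu_V$ together with the lower bound $\lambda\ge\mu_V - O(\|W\|)$ (from $J(u)\le J(u_V)$ and the perturbation estimate) that one traps $\lambda$ near $\mu_V$ and then concludes $\eta$ is small; so the argument is: first show \emph{every} minimizer is $O(\|W\|)$-close to $u_V$, then run the contraction on that neighborhood to get uniqueness.

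The main obstacle I anticipate is item (i): obtaining the nonlinear estimates on the convolution term $\bar{\FF}(W)*|u|^2$ in a form strong enough to give a contraction, uniformly as the infrared and ultraviolet cutoffs are removed. The $L^1$ part of $W$ is harmless (it gives a bounded pair potential $\bar{\FF}(W_1)\in L^\infty$), but the $L^{3,\infty}$ part produces a pair potential with critical $|x|^{-2}$-type decay, and controlling $\big(\bar{\FF}(W_2)*|u|^2\big)u$ in $\mathcal{Q}_V$ — and its dependence on $u$ — requires the weak-Young inequality $\|\bar{\FF}(W_2)*g\|_{L^3}\lesssim \|W_2\|_{L^{3,\infty}}\|g\|_{L^{3/2,1}}$ (or an $L^{3,\infty}\ast L^{p}$ variant) combined with Sobolev embeddings $\mathcal{Q}_V\hookrightarrow L^6 \hookrightarrow$ the relevant Lorentz space, keeping careful track that all constants depend only on $V$ (through $C_V$) and not on the cutoffs. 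This is exactly the technical heart the authors flag as novel, and it is where the bulk of the work lies.
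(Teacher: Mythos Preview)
Your proposal is correct and follows essentially the same route as the paper: reduction to the Hartree functional via Proposition~\ref{prop:KGS-Hartree}, the Euler--Lagrange equation projected by $\Pi_V^\perp$ to obtain $\varphi=2(H_V-\lambda)^{-1}\Pi_V^\perp\big[(\bar{\FF}(W)*|u|^2)u\big]$ (Lemma~\ref{lm:phi}), and then a contraction-type estimate driven by the Lorentz-space bounds of Lemma~\ref{lm:convol-ineq} together with the spectral gap of $H_V$. The only cosmetic differences are that the paper phrases the last step as a direct comparison of two minimizers (Lemma~\ref{lm:phi_stab}) rather than an abstract fixed-point map, and it dispenses with the positivity/Perron--Frobenius digression by observing that $\|\varphi\|_{L^2}\le\frac12$ forces $|\langle u,u_V\rangle|>0$, so the phase condition can be imposed directly.
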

Under the conditions of the previous theorem, recalling that $W=g^2\omega^{-1}v^2$, we can now compute the asymptotic expansion of the ground state energy as the coupling constant $g$ goes to $0$.
\begin{proposition}[Expansion of the ground state energy at small coupling]\label{thm:mainlnc3}
Under the conditions of Theorem \ref{thm:mainlnc2}, we have
\begin{align}
\underset{(u,f)\in\, \mathcal{U}\times\ZZ_{\omega}}{\min}\E(u,f)
&=\mu_V-g^2\int_{\R^3} \big(\bar{\FF}(\omega^{-1}v^2)*\abs{u_V}^2\big)(x)\abs{u_V(x)}^2\diff x+\mathcal{O}(g^4),\label{asympt_GS}
\end{align}
as $g\to0$.
\end{proposition}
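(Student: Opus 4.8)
The plan is to exploit the reduction already recorded before \eqref{eq:Hartree_eq}: the minimum of $\mathcal{E}$ over $\mathcal{U}\times\mathcal{Z}_\omega$ equals $\min_{u\in\mathcal{U}} J(u)$, where $J(u)=\langle u,H_Vu\rangle-\int(\bar{\FF}(W)*|u|^2)|u|^2$. Writing $W=g^2\omega^{-1}v^2$ and setting $W_0:=\omega^{-1}v^2$, we have $J(u)=\langle u,H_Vu\rangle - g^2\,N(u)$ with $N(u):=\int(\bar{\FF}(W_0)*|u|^2)(x)|u(x)|^2\diff x$. The upper bound in \eqref{asympt_GS} is immediate by using $u_V$ as a trial state: $\min J \le J(u_V) = \mu_V - g^2 N(u_V)$, which already gives the stated leading terms with an $\mathcal{O}(g^2)$ (in fact we will see $\mathcal{O}(g^4)$) error in one direction. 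For the matching lower bound one needs to control how far the true minimizer $u_{\mathrm{gs}}$ (which exists and is unique by Theorems \ref{thm:mainlnc} and \ref{thm:mainlnc2} for $g$ small, with $\langle u_{\mathrm{gs}},u_V\rangle>0$) is from $u_V$.

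The key quantitative step is to show $\|u_{\mathrm{gs}}-u_V\|_{\mathcal{Q}_V}=\mathcal{O}(g^2)$. I would obtain this from the non-linear eigenvalue (Euler--Lagrange) equation for the Hartree functional, $H_V u_{\mathrm{gs}} - 2g^2(\bar{\FF}(W_0)*|u_{\mathrm{gs}}|^2)u_{\mathrm{gs}} = \lambda_{\mathrm{gs}} u_{\mathrm{gs}}$, projecting it with $\Pi_V$ and $\Pi_V^\perp$ exactly as announced in the introduction. On the range of $\Pi_V^\perp$ the operator $H_V-\mu_V$ is bounded below by the spectral gap $\gamma_V>0$, so writing $u_{\mathrm{gs}}=\alpha u_V + r$ with $r=\Pi_V^\perp u_{\mathrm{gs}}$, the orthogonal component of the equation reads $(H_V-\lambda_{\mathrm{gs}})r = 2g^2\Pi_V^\perp\big((\bar{\FF}(W_0)*|u_{\mathrm{gs}}|^2)u_{\mathrm{gs}}\big)$; one then inverts $H_V-\lambda_{\mathrm{gs}}$ on $\mathrm{Ran}\,\Pi_V^\perp$ (using $\lambda_{\mathrm{gs}}=\mu_V+\mathcal{O}(g^2)$ to stay away from the rest of the spectrum) and bounds the right-hand side in the dual norm using Hypothesis \ref{condv0:2a} together with the form-boundedness estimates for the convolution term established in Lemma \ref{lm:JV} and in the Hartree appendix. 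This yields $\|r\|_{\mathcal{Q}_V}\le C g^2$, and then $|\alpha-1|=\mathcal{O}(g^4)$ from $\|u_{\mathrm{gs}}\|_{L^2}=1$ and $\alpha>0$.

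With $u_{\mathrm{gs}}=u_V+\mathcal{O}(g^2)$ in $\mathcal{Q}_V$ in hand, I finish by Taylor-expanding $J$ around $u_V$. Since $u_V$ is a critical point of $\langle\cdot,H_V\cdot\rangle$ on $\mathcal{U}$, the quadratic part contributes $\langle u_{\mathrm{gs}},H_V u_{\mathrm{gs}}\rangle = \mu_V + \langle r,(H_V-\mu_V)r\rangle + \mathcal{O}(g^4) = \mu_V + \mathcal{O}(g^4)$, where the gap bound makes $\langle r,(H_V-\mu_V)r\rangle = \mathcal{O}(\|r\|_{\mathcal{Q}_V}^2)=\mathcal{O}(g^4)$; and the quartic interaction term satisfies $g^2 N(u_{\mathrm{gs}}) = g^2 N(u_V) + \mathcal{O}(g^2\|u_{\mathrm{gs}}-u_V\|_{\mathcal{Q}_V}) = g^2 N(u_V)+\mathcal{O}(g^4)$, using multilinearity of $N$ and the continuity estimate $|N(u)-N(u')|\le C(\|u\|_{\mathcal{Q}_V}+\|u'\|_{\mathcal{Q}_V})^3\|u-u'\|_{\mathcal{Q}_V}$ coming again from the Lorentz-space bounds of Hypothesis \ref{condv0:2a}. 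Combining, $\min J = \mu_V - g^2 N(u_V) + \mathcal{O}(g^4)$, which is \eqref{asympt_GS}.

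The main obstacle I anticipate is the quantitative $\mathcal{O}(g^2)$ control of $u_{\mathrm{gs}}-u_V$: it requires uniform (in $g$) form-bounds on the convolution operator $\varphi\mapsto(\bar{\FF}(W_0)*|\varphi|^2)\psi$ between $\mathcal{Q}_V$ and its dual, with constants independent of the small parameter, and a careful use of the spectral gap from Hypothesis \ref{condGS} to invert $\Pi_V^\perp(H_V-\lambda_{\mathrm{gs}})\Pi_V^\perp$ — this is precisely where the projection onto $u_V$ and its orthogonal complement, flagged in the introduction, does the work. Everything else is a routine Taylor expansion once this a priori bound is secured.
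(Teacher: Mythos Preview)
Your proposal is correct and follows essentially the same approach as the paper: reduce to the Hartree functional, decompose the minimizer as $u_{\mathrm{gs}}=\alpha u_V+\varphi$ with $\varphi\in\mathrm{Ran}\,\Pi_V^\perp$, invert the projected Euler--Lagrange equation using the spectral gap to obtain $\|\varphi\|_{\mathcal{Q}_V}=\mathcal{O}(g^2)$ and $|\alpha|^2=1+\mathcal{O}(g^4)$, and then expand $J(u_{\mathrm{gs}})$. The only cosmetic difference is that the paper bounds $\langle\varphi,H_V\varphi\rangle$ directly via the resolvent identity $\varphi=2g^2 R_{\lambda_V}\Pi_V^\perp(\cdots)$ rather than via $\|\varphi\|_{\mathcal{Q}_V}^2$, and packages the projection/inversion step as a separate lemma (Lemma~\ref{lm:phi}); your route through $\langle r,(H_V-\mu_V)r\rangle\lesssim\|r\|_{\mathcal{Q}_V}^2$ is equally valid (note this bound comes from the definition of the $\mathcal{Q}_V$-norm, not from the gap).
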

To obtain uniqueness of the minimizer, as well as the expansion \eqref{asympt_GS}, we use that any minimizer of \eqref{eq:Hartree_eq} is a non-linear Hartree eigenstate and project the non-linear eigenvalue equation to the vector space spanned by the electronic ground state $u_V$ and its orthogonal complement.

%


As mentioned above, in the case where $W_2=0$, i.e.~$W=g^2\omega^{-1}v^2$ is in~$L^1(\mathbb{R}^3)$, the Hamiltonian $\mathbb{H}$ in \eqref{eq:general_PF} identifies with a semi-bounded self-adjoint operator. Hence we can compare the ground state energy of $\mathbb{H}$ with its quasi-classical counterpart:
\begin{proposition}[Comparison with the ground state energy of $\mathbb{H}$]\label{prop:diff_GSE}
Under the conditions of Theorem \ref{thm:mainlnc2}, with $W$ in~$L^1(\mathbb{R}^3)$, we have
\begin{multline*}
\inf \sigma(\mathbb{H})-\mathcal{E}(u_{\mathrm{gs}},f_{\mathrm{gs}})\\
=-g^2 \int_{\mathbb{R}^3} v(k)^2 \big\langle u_V , e^{ikx} \Pi_V^\perp\big({H}_V-\mu_V+|k|\big)^{-1} \Pi_V^\perp e^{-ikx} u_V\big\rangle_{L^2_x}\mathrm{d}k +o(g^2) \,,
\end{multline*}
as $g\to 0$.
\end{proposition}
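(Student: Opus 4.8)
The plan is to compare $\inf\sigma(\mathbb{H})$ with the quasi-classical energy $\mathcal{E}(u_{\mathrm{gs}},f_{\mathrm{gs}})$ by using, on one side, the variational principle with a well-chosen trial state, and on the other side, a Feshbach-type argument (complete the square / second-order perturbation theory) to extract the leading correction. Since $W\in L^1(\mathbb{R}^3)$, we have $v\omega^{-1/2}\in L^2$, so $\mathbb{H}$ is a semi-bounded self-adjoint operator on $\mathcal{Q}(\mathbb{H}_{\mathrm{free}})$ and standard ground state existence/perturbation theory applies. Writing $g=\sqrt{|W|/(\omega^{-1}v^2)}$ up to sign, everything is organized as an expansion in $g$.

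First, for the upper bound, I would use the trial state $u_{\mathrm{gs}}\otimes\Psi_{f_{\mathrm{gs}}}$: by Proposition~\ref{prop:condfL2} (and the remark preceding it) the hypothesis $W\in L^1$ together with $f_{\mathrm{gs}}=-g\omega^{-1}v\FF(|u_{\mathrm{gs}}|^2)$ ensures $f_{\mathrm{gs}}\in L^2$, hence $u_{\mathrm{gs}}\otimes\Psi_{f_{\mathrm{gs}}}\in\mathcal{Q}(\mathbb{H})$ and the variational principle gives $\inf\sigma(\mathbb{H})\le\mathcal{E}(u_{\mathrm{gs}},f_{\mathrm{gs}})$. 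This already shows the difference $\inf\sigma(\mathbb{H})-\mathcal{E}(u_{\mathrm{gs}},f_{\mathrm{gs}})\le 0$, so the content of the proposition is the matching lower bound with the explicit $-g^2$ term. For the lower bound I would perform a Weyl/dressing transformation (Bogoliubov shift) that removes the classical part of the field: conjugating $\mathbb{H}$ by the coherent-state unitary $e^{i\Phi(\cdot)}$ associated to the \emph{electron-position-dependent} shift would be natural here, but since that shift depends on $x$ it does not commute with $H_V$; instead I would shift by the fixed profile $f_{\mathrm{gs}}$ (which depends only on the known minimizer, not on $x$), turning $\mathbb{H}$ into $\mathcal{E}(u_{\mathrm{gs}},f_{\mathrm{gs}})+\tilde{H}_V\otimes\mathbf{I}+\mathbf{I}\otimes\mathbb{H}_{\mathrm{f}}+g\sqrt2\,\tilde\Phi$, where $\tilde{H}_V$ is $H_V$ with an effective potential generated by $f_{\mathrm{gs}}$, and the residual interaction $\tilde\Phi$ is still linear in $a^\#$ but with modified coupling function. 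The point is that after this shift the residual one-particle energy has a strictly positive gap above its infimum (up to $\mathcal{O}(g^2)$), which is exactly where Hypothesis~\ref{condGS} and the smallness $\|W\|_{L^1+L^{3,\infty}}\le\varepsilon_V$ from Theorem~\ref{thm:mainlnc2} enter.

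With the field shifted so that the ground state of the quadratic part is $\tilde u_{\mathrm{gs}}\otimes\Omega$ at energy $\mathcal{E}(u_{\mathrm{gs}},f_{\mathrm{gs}})+\mathcal{O}(g^4)$ (where $\tilde u_{\mathrm{gs}}$ is the ground state of $\tilde H_V$, which equals $u_{\mathrm{gs}}$ up to $\mathcal{O}(g^2)$), I would apply standard second-order perturbation theory / the Feshbach map with the residual linear interaction $g\sqrt2\,\tilde\Phi(h_x)$ as perturbation and $\tilde u_{\mathrm{gs}}\otimes\Omega$ as the reference state. The first-order correction vanishes because $\Phi$ is odd in $a^\#$ and annihilates $\Omega$ against $\Omega$; the second-order correction is $-g^2\langle\tilde u_{\mathrm{gs}}\otimes\Omega,\,\tilde\Phi\,(\tilde H_{\mathrm{free}}'-E_0)^{-1}\bar Q\,\tilde\Phi\,\tilde u_{\mathrm{gs}}\otimes\Omega\rangle$, and computing the resolvent against one-photon states $a^*(k)\Omega$ produces precisely $-g^2\int v(k)^2\langle u_V, e^{ikx}\Pi_V^\perp(H_V-\mu_V+|k|)^{-1}\Pi_V^\perp e^{-ikx}u_V\rangle\,dk+o(g^2)$ — the $\Pi_V^\perp$ appearing because the contribution of the electronic ground state channel in the resolvent is precisely what the classical shift by $f_{\mathrm{gs}}$ already subtracted, i.e.~it is absorbed into $\mathcal{E}(u_{\mathrm{gs}},f_{\mathrm{gs}})$, which is the structural heart of the statement. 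Replacing $\tilde u_{\mathrm{gs}}$, $\tilde H_V$, and the modified coupling function by $u_V$, $H_V$, $v$ at the cost of $o(g^2)$ uses Theorem~\ref{thm:mainlnc2} together with the expansion in Proposition~\ref{thm:mainlnc3} and elementary resolvent estimates (note $\omega\ge|k|$ would make $(H_V-\mu_V+|k|)^{-1}$ uniformly bounded on $\mathrm{Ran}\,\Pi_V^\perp$ by the spectral gap of $H_V$, so the $k$-integral converges using $W\in L^1$).

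The main obstacle I anticipate is controlling the error terms uniformly in $g$ in the regime without ultraviolet or infrared cutoff, i.e.~making the Feshbach/perturbation expansion rigorous when the coupling function $h_x$ is only in $L^2$ (not in $\mathcal{Q}(\mathbb{H}_{\mathrm{f}})^{1/2}$-type spaces) and when $f_{\mathrm{gs}}$ may fail to be smooth. Concretely, the relative-boundedness of $\Phi(h_x)$ with respect to $\mathbb{H}_{\mathrm{free}}$ holds with infinitesimal bound when $v\omega^{-1/2}\in L^2$ (which is guaranteed by $W\in L^1$), so $N_\tau$-estimates and the pull-through formula are available; the delicate point is rather that the ``reference'' operator after shifting, $\tilde H_V$, must retain a spectral gap of size $\mu_{V_1}-\mu_V+\mathcal{O}(g^2)$ so that the second-order resolvent is well-defined on $\mathrm{Ran}\,\Pi_V^\perp$, and that all remainders are genuinely $o(g^2)$ and not merely $\mathcal{O}(g^2)$ — this forces a careful bookkeeping of which $\mathcal{O}(g^2)$ corrections to $u_{\mathrm{gs}}$, to $f_{\mathrm{gs}}$, and to the effective Hamiltonian feed back into the $g^2$-order term versus the error. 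I would handle this by first establishing, via the implicit function theorem applied to the Hartree eigenvalue equation (as in the proof of Theorem~\ref{thm:mainlnc2}), that $u_{\mathrm{gs}}=u_V+\mathcal{O}(g^2)$ and $f_{\mathrm{gs}}=-g\omega^{-1}v\FF(|u_V|^2)+\mathcal{O}(g^3)$ in the appropriate norms, and then feeding these expansions into both the upper and lower bounds, absorbing cross-terms of order $g\cdot g^2=g^3$ and higher into $o(g^2)$.
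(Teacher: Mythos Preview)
Your approach is sound but takes a genuinely different route from the paper's. The paper does not shift, dress, or run separate upper/lower bounds: it simply quotes the standard second-order expansion $\inf\sigma(\mathbb{H})=\mu_V-g^2\langle u_V\otimes\Omega,\,a(h_x)(\Pi_V\otimes\Pi_\Omega)^\perp(\mathbb{H}_{\mathrm{free}}-\mu_V)^{-1}(\Pi_V\otimes\Pi_\Omega)^\perp a^*(h_x)\,u_V\otimes\Omega\rangle+o(g^2)$ from the Pauli--Fierz literature, and separately invokes Proposition~\ref{thm:mainlnc3} for the expansion of $\mathcal{E}(u_{\mathrm{gs}},f_{\mathrm{gs}})$. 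The subtraction is then purely algebraic: writing $(\Pi_V\otimes\Pi_\Omega)^\perp=\Pi_V\otimes\Pi_\Omega^\perp+\Pi_V^\perp\otimes\mathbf{I}_{\mathrm{f}}$, the first summand reproduces exactly the order-$g^2$ term of the quasi-classical energy (this is \eqref{eq:comput-2ndorder}), and the second yields the displayed integral. No bookkeeping of $u_{\mathrm{gs}}$ versus $u_V$ inside the resolvent, no effective $\tilde H_V$, no error propagation is needed.

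Your coherent shift by $f_{\mathrm{gs}}$ is a legitimate alternative and actually has more structure than you use: because $f_{\mathrm{gs}}=-g\omega^{-1}v\FF(|u_{\mathrm{gs}}|^2)$, the shifted electronic operator is \emph{exactly} the linearized Hartree operator $H_V-2\bar\FF(W)*|u_{\mathrm{gs}}|^2$, whose ground state is $u_{\mathrm{gs}}$ itself (not merely up to $\mathcal{O}(g^2)$) with eigenvalue $\lambda_V$, and the residual coupling $\tilde h_x(k)=v(k)(e^{-ikx}-\FF(|u_{\mathrm{gs}}|^2)(k))$ satisfies $\langle u_{\mathrm{gs}},\tilde h_x(k)\,u_{\mathrm{gs}}\rangle_x=0$ for every $k$, so the ground-state channel drops out of the second-order term identically. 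This makes the appearance of $\Pi_V^\perp$ mechanistically transparent. The price is that you still need a rigorous second-order expansion for the shifted Pauli--Fierz operator --- the same non-trivial input the paper cites --- so your route is not more self-contained, only differently organized and longer. One small correction: on $\mathrm{Ran}\,\Pi_V^\perp$ the bound $(H_V-\mu_V+\omega(k))^{-1}\le\omega(k)^{-1}$ already gives integrability of the $k$-integral from $W=g^2\omega^{-1}v^2\in L^1$; no relation between $\omega$ and $|k|$ is required.
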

The term of order $g^2$ of the asymptotic expansion given by Proposition \ref{prop:diff_GSE} can be rewritten as
\begin{multline*}
\inf \sigma(\mathbb{H})-\mathcal{E}(u_{\mathrm{gs}},f_{\mathrm{gs}})\\
=-g^2 \ps{u_V\otimes\Omega}{a(h_x)(\Pi_V^\perp\otimes\mathbf{I}_{\mathrm{f}})\big(\mathbb{H}_{\mathrm{free}}-\mu_V\big)^{-1}(\Pi_V^\perp\otimes\mathbf{I}_{\mathrm{f}})a^*(h_x)u_V\otimes\Omega} +o(g^2) \,.
\end{multline*}
It should be compared with the term of order $g^2$ in the asymptotic expansion \eqref{asympt_GS} of the quasi-classical ground state energy $\E(u_{\mathrm{gs}},f_{\mathrm{gs}})$, which is given by
\begin{align}
&-g^2 \int_{\R^3} \big(\bar{\FF}(\omega^{-1}v^2)*\abs{u_V}^2\big)(x)\abs{u_V(x)}^2\diff x \notag\\
&=-g^2\ps{u_V\otimes\Omega}{a(h_x)(\Pi_V\otimes\Pi_\Omega^\perp)\big(\mathbb{H}_{\mathrm{free}}-\mu_V\big)^{-1}(\Pi_V\otimes\Pi_\Omega^\perp)a^*(h_x)u_V\otimes\Omega}, \label{eq:comput-2ndorder}
\end{align}
where $\Pi_\Omega$ is the projection onto the Fock vacuum and $\Pi_\Omega^\perp:=\mathbf{I}-\Pi_\Omega$. Hence we see that, at second order in the coupling constant, the ground state energy of $\mathbb{H}$ can be divided into two terms: a `coherent' term which is independent of the excited electronic eigenstates, and a `non-coherent' term which sums the contributions from these excited states. In particular, defining $\delta_V:=\mathrm{dist}(\mu_V,\sigma(H_V)\setminus\{\mu_V\})$ the distance between $\mu_V$ and the rest of the spectrum of $H_V$, we deduce from the previous expressions that if $\delta_V$ is large, then the non-coherent term is small and hence the coherent term becomes a good approximation to the ground state energy of $\mathbb{H}$. 

\subsubsection{Ultraviolet limit}

We suppose here that the coupling function is cut-off in the ultraviolet, i.e.~that it is of the form $v_\Lambda=v\mathds{1}_{|k|\le\Lambda}$ for some ultraviolet parameter $0<\Lambda<\infty$. We are interested in the ultraviolet limit $\Lambda\to\infty$. We write
\begin{equation*}
W_\Lambda:=g^2\omega^{-1}v_\Lambda=W\mathds{1}_{|k|\le\Lambda},
\end{equation*}
and note that if $W$ satisfies Hypothesis \ref{condv0:2a}, then for all $\Lambda>0$, $W_\Lambda$ is in~$L^1$ (this follows from the weak H\"older inequality, see \eqref{eq:weak_Holder} 
below). The fact that $W_\Lambda$ belongs to~$L^1$  in turn ensures that the Pauli-Fierz Hamiltonian 
\begin{equation*}
\mathbb{H}_\Lambda:=H_V\otimes\mathbf{I}_{\mathrm{f}}+\mathbf{I}_{\mathrm{el}}\otimes \mathbb{H}_{\mathrm{f}}+g\Phi(h_{\Lambda,x}),\quad h_{\Lambda,x}(k):=v_\Lambda(k)e^{-ikx},
\end{equation*}
identifies to a self-adjoint operator (see Appendix \ref{app:Fock}). 


Let $\mathcal{E}_\Lambda$ be the Klein--Gordon--Schr\"odinger energy functional with an ultraviolet cutoff, i.e.~$\mathcal{E}_\Lambda$ is given by \eqref{eq:cE2} with $v_\Lambda$ instead of $v$. The next proposition establishes the convergence of the ground state energies in the ultraviolet limit. Note that the assumptions are rather weak. In particular they do not necessarily imply the existence of a ground state for~$\mathcal{E}$ and $\mathcal{E}_\Lambda$.
\begin{proposition}[Ultraviolet limit of the ground state energies]\label{prop:conv-GS-en-intro}
Suppose that $V$ satisfies Hypothesis \ref{condV} and that $W$ satisfies Hypothesis \ref{condv0:2a}. Then
\begin{equation*}
\underset{(u,f)\in\, \mathcal{U}\times\ZZ_{\omega}}{\inf}\E_\Lambda(u,f) \underset{\Lambda\to\infty}{\longrightarrow} \underset{(u,f)\in\, \mathcal{U}\times\ZZ_{\omega}}{\inf}\E(u,f).
\end{equation*}
\end{proposition}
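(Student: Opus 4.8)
The plan is to reduce the problem to the Hartree functional and then exploit monotonicity and a dominated-convergence-type argument. Recall from the discussion following Theorem~\ref{thm:mainlnc} that minimizing $\mathcal{E}(u,f)$ over $\mathcal{Z}_\omega$ in the field variable $f$ for fixed $u$ gives $f_u=-g\omega^{-1}v\,\FF(|u|^2)$, so that
\[
\inf_{(u,f)\in\,\mathcal{U}\times\ZZ_\omega}\mathcal{E}(u,f)=\inf_{u\in\mathcal{U}}J(u),
\]
and likewise $\inf\mathcal{E}_\Lambda=\inf_{u\in\mathcal{U}}J_\Lambda(u)$, where $J_\Lambda$ is the Hartree functional \eqref{eq:Hartree_eq} with $W$ replaced by $W_\Lambda=W\mathds{1}_{|k|\le\Lambda}$. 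First I would record that since $v$ is real-valued, $\bar{\FF}(W)$ is a real even function and the convolution term in $J$ can be written (via Plancherel, using the convention \eqref{eq:convfourier}) as $\int_{\R^3} W(k)|\FF(|u|^2)(k)|^2\diff k \ge 0$; the same holds for $J_\Lambda$ with $W_\Lambda$. Hence, because $0\le W_\Lambda\le W$ pointwise and $W_\Lambda\uparrow W$ as $\Lambda\to\infty$, for each fixed $u\in\mathcal{U}$ with $\langle u,H_Vu\rangle<\infty$ we have $J_\Lambda(u)\ge J(u)$ and $J_\Lambda(u)\downarrow J(u)$ by monotone convergence. This immediately gives $\inf_u J_\Lambda(u)\ge \inf_u J(u)$ for every $\Lambda$, and, taking the infimum over $u$ of $J_\Lambda(u)\to J(u)$, also $\limsup_{\Lambda\to\infty}\inf_u J_\Lambda\le \inf_u J(u)$ (test with a near-minimizer, or near-infimizer, of $J$). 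Combined with the reverse inequality $\inf_u J_\Lambda\ge\inf_u J$, this would finish the proof, provided we also control $\liminf$.

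For the $\liminf$ bound one only needs $\inf_u J_\Lambda\ge\inf_u J$ for all $\Lambda$, which is the trivial direction above; so in fact the monotone structure alone seems to do the job: $\Lambda\mapsto\inf_u J_\Lambda$ is nonincreasing (since $J_\Lambda$ is pointwise nonincreasing in $\Lambda$), bounded below by $\inf_u J$, hence converges to some limit $\ell\ge\inf_u J$; and $\ell\le\limsup_\Lambda J_\Lambda(u)=J(u)$ for every admissible $u$, giving $\ell\le\inf_u J$. The only genuine point requiring care is that all these infima are finite, i.e.\ that $J$ and $J_\Lambda$ are bounded below on $\mathcal{U}$ — but this is exactly the content of the coercivity/lower-bound estimates established for the Hartree functional (Lemma~\ref{lm:JV} together with the weak Young/Hölder inequalities in Lorentz spaces), applied to $W$ and, since $\|W_\Lambda\|_{L^1+L^{3,\infty}}\le\|W\|_{L^1+L^{3,\infty}}$ uniformly in $\Lambda$, to all the $W_\Lambda$ with a $\Lambda$-independent lower bound. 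I would also remark that Hypothesis~\ref{condv0:2a} for $W$ implies it for each $W_\Lambda$ (indeed $W_\Lambda\in L^1$, as noted via weak Hölder), so the functionals $J_\Lambda$ and their reductions from $\mathcal{E}_\Lambda$ are all well-defined on $\mathcal{U}\times\mathcal{Z}_\omega$.

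The main obstacle, such as it is, lies in the reduction step rather than in the limit: one must justify that for fixed $u\in\mathcal{U}$ the infimum of $\mathcal{E}(u,\cdot)$ over $\mathcal{Z}_\omega$ is attained at $f_u$ and equals $J(u)$, including the case where $\omega$ vanishes or $v/\omega$ is not square-integrable so that $f_u$ lies in $\mathcal{Z}_\omega\setminus L^2$; this is handled by completing the square in \eqref{eq:cE2}, writing
\[
\mathcal{E}(u,f)=\langle u,H_Vu\rangle+\int_{\R^3}\omega(k)\,\bigl|f(k)+g\omega(k)^{-1}v(k)\FF(|u|^2)(k)\bigr|^2\diff k-\int_{\R^3}\frac{g^2v(k)^2}{\omega(k)}\,|\FF(|u|^2)(k)|^2\diff k,
\]
where the last integral is finite because $W$ satisfies Hypothesis~\ref{condv0:2a} and $|u|^2$ is controlled in the relevant Lorentz/Lebesgue norms by Lemma~\ref{lm:JV}; the middle term is $\ge0$ and vanishes precisely at $f=f_u$, and $f_u\in\mathcal{Z}_\omega$ since $\omega^{1/2}f_u=-g\omega^{-1/2}v\FF(|u|^2)$ has $L^2$ norm squared equal to that last finite integral. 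This identity simultaneously gives the reduction for $\mathcal{E}$ and, with $v_\Lambda$ in place of $v$, for $\mathcal{E}_\Lambda$, and makes the monotonicity in $\Lambda$ manifest. Everything else is the soft monotone-convergence argument above.
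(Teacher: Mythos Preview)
Your proof is correct and follows essentially the same route as the paper: reduce to the Hartree functionals via completing the square (Proposition~\ref{prop:KGS-Hartree}), use that $\Lambda\mapsto J_\Lambda(u)$ is nonincreasing to get $E_{V,\Lambda}\ge E_V$ and monotonicity of $\Lambda\mapsto E_{V,\Lambda}$, then test with a near-minimizer of $J$ and pass to the limit $J_\Lambda(u_\varepsilon)\to J(u_\varepsilon)$. The only cosmetic difference is that you invoke monotone convergence for this last step while the paper phrases it via dominated convergence (Lemma~\ref{lm:Leb}); since $W_\Lambda\uparrow W$ and the integrand is nonnegative, either works, and your version is arguably cleaner.
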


Under conditions ensuring that $\E_\Lambda$ and $\E$ have unique minimizers, we can also establish the convergence of the ground states of $\E_\Lambda$ to the ground state of $\E$, as $\Lambda\to\infty$. 
\begin{proposition}[Ultraviolet limit of the ground states]\label{prop:conv-GS2-en-intro}
Suppose that $V$ satisfies Hypotheses~\ref{condV} and \ref{condGS} and that $W$ satisfies Hypothesis \ref{condv0:2a}. There exists $\varepsilon_V>0$ such that, if
\begin{equation*}
\|W\|_{L^1+L^{3,\infty}}\le\varepsilon_V,
\end{equation*}
then for all $\Lambda>0$, $\E_\Lambda$ and $\E$ have unique minimizers $(u_{\Lambda,\mathrm{gs}},f_{\Lambda,\mathrm{gs}})$ and $(u_{\mathrm{gs}},f_{\mathrm{gs}})$ in $\mathcal{U}\times\mathcal{Z}_\omega$, respectively, such that $\langle u_{\Lambda,\mathrm{gs}},u_V\rangle_{L^2}>0$ and $\langle u_{\mathrm{gs}},u_V\rangle_{L^2}>0$. They satisfy
\begin{equation*}
\big\|(u_{\Lambda,\mathrm{gs}},f_{\Lambda,\mathrm{gs}}) - (u_{\mathrm{gs}},f_{\mathrm{gs}}) \big\|_{\mathcal{Q}_V\times\mathcal{Z}_\omega} \underset{\Lambda\to\infty}{\longrightarrow} 0.
\end{equation*}
\end{proposition}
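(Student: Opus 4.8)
The proof rests on the reduction to the Hartree functional already used for Theorem~\ref{thm:mainlnc}. Write $J$ for the functional in~\eqref{eq:Hartree_eq} and, for $0<\Lambda<\infty$, write $J_\Lambda$ for the same functional with $W_\Lambda=W\mathds{1}_{|k|\le\Lambda}$ in place of $W$. Since $W_\Lambda=W_1\mathds{1}_{|k|\le\Lambda}+W_2\mathds{1}_{|k|\le\Lambda}$ with $\|W_1\mathds{1}_{|k|\le\Lambda}\|_{L^1}\le\|W_1\|_{L^1}$ and $\|W_2\mathds{1}_{|k|\le\Lambda}\|_{L^{3,\infty}}\le\|W_2\|_{L^{3,\infty}}$, each $W_\Lambda$ satisfies Hypothesis~\ref{condv0:2a} with norms bounded by those of $W$, uniformly in $\Lambda$. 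Choosing $\varepsilon_V$ smaller than the thresholds appearing in Theorems~\ref{thm:mainlnc} and~\ref{thm:mainlnc2} (which, under Hypotheses~\ref{condV} and~\ref{condGS}, also forces \eqref{eq:condv:0} and \eqref{eq:smallness}), both $\mathcal{E}$ and every $\mathcal{E}_\Lambda$ have a unique minimizer with positive overlap against $u_V$; by the reduction to the Hartree functional recalled around~\eqref{eq:Hartree_eq}, these are $(u_{\mathrm{gs}},f_{u_{\mathrm{gs}}})$ and $(u_{\Lambda,\mathrm{gs}},f_{\Lambda,u_{\Lambda,\mathrm{gs}}})$ with $f_u=-g\omega^{-1}v\FF(|u|^2)$, $f_{\Lambda,u}=-g\omega^{-1}v\mathds{1}_{|k|\le\Lambda}\FF(|u|^2)$, and $u_{\mathrm{gs}}$, $u_{\Lambda,\mathrm{gs}}$ minimize $J$, $J_\Lambda$ over $\mathcal{U}$. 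It therefore suffices to show $u_{\Lambda,\mathrm{gs}}\to u_{\mathrm{gs}}$ strongly in $\mathcal{Q}_V$; the $\mathcal{Z}_\omega$-convergence of the field components will then follow from the explicit formulas.

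Fix any sequence $\Lambda_n\to\infty$. I would first record two uniform facts. (i) The coercivity bound underlying Theorem~\ref{thm:mainlnc}, namely $J_\Lambda(u)\ge c\|u\|_{\mathcal{Q}_V}^2-C$ with $c>0$ and $C$ depending only on $a$, on $\mu_{V_1}-\mu_V$ and on $\|W_1\|_{L^1},\|W_2\|_{L^{3,\infty}}$ (here one uses \eqref{eq:smallness}), holds for every $\Lambda$ with the same constants by the previous paragraph; combined with $\inf J_{\Lambda_n}\le J_{\Lambda_n}(u_V)\le\mu_V$ (the Hartree interaction is nonnegative since $W\ge0$), it yields a uniform bound $\|u_{\Lambda_n,\mathrm{gs}}\|_{\mathcal{Q}_V}\le M$. (ii) The interaction term of $J_\Lambda$ is, up to a positive constant, $\int_{\R^3}W_\Lambda(k)|\FF(|u|^2)(k)|^2\diff k$, which is nondecreasing in $\Lambda$ because $0\le W_\Lambda\le W$; hence $J(u)=J_\Lambda(u)-c\int_{|k|>\Lambda}W\,|\FF(|u|^2)|^2\le J_\Lambda(u)$ for all $u$ and $\Lambda$. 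Evaluating (ii) at $u_{\Lambda_n,\mathrm{gs}}$ and using $\inf J_{\Lambda_n}=\inf\mathcal{E}_{\Lambda_n}\to\inf\mathcal{E}=\inf J$ from Proposition~\ref{prop:conv-GS-en-intro}, we obtain $\inf J\le J(u_{\Lambda_n,\mathrm{gs}})\le\inf J_{\Lambda_n}\to\inf J$, so that $J(u_{\Lambda_n,\mathrm{gs}})\to\inf J$; in particular $(u_{\Lambda_n,\mathrm{gs}})_n$ is a bounded minimizing sequence for $J$.

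Next I would invoke the concentration--compactness analysis carried out in the proof of Theorem~\ref{thm:mainlnc} (Appendix~\ref{app:Hartree}): under Hypothesis~\ref{condV} and the smallness of $W$, the binding inequality coming from $\mu_V<\mu_{V_1}$, together with $V_2\to0$ at infinity, rules out vanishing and dichotomy, so that every bounded minimizing sequence for $J$ has a subsequence converging strongly in $\mathcal{Q}_V$ to a minimizer of $J$. Applying this to $(u_{\Lambda_n,\mathrm{gs}})_n$ gives, along a subsequence, $u_{\Lambda_n,\mathrm{gs}}\to u_*$ in $\mathcal{Q}_V$ with $J(u_*)=\inf J$. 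Passing $\langle u_{\Lambda_n,\mathrm{gs}},u_V\rangle>0$ to the limit gives $\langle u_*,u_V\rangle\ge0$; since by Theorem~\ref{thm:mainlnc2} every minimizer of $J$ equals $e^{i\theta}u_{\mathrm{gs}}$ with $\langle u_{\mathrm{gs}},u_V\rangle>0$, positivity of the overlap forces $\theta=0$, i.e.\ $u_*=u_{\mathrm{gs}}$. As the limit is independent of the subsequence, the whole family converges: $u_{\Lambda,\mathrm{gs}}\to u_{\mathrm{gs}}$ in $\mathcal{Q}_V$ as $\Lambda\to\infty$.

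Finally, for the field components, the explicit formulas and $g^2\omega^{-1}v^2=W$ give, after splitting $\mathds{1}_{|k|\le\Lambda}\FF(|u_{\Lambda,\mathrm{gs}}|^2)-\FF(|u_{\mathrm{gs}}|^2)$ into a difference of densities and a high-frequency tail,
\[
\big\|f_{\Lambda,\mathrm{gs}}-f_{\mathrm{gs}}\big\|_{\mathcal{Z}_\omega}^2\le 2\!\int_{\R^3}\!W\,\big|\FF\big(|u_{\Lambda,\mathrm{gs}}|^2-|u_{\mathrm{gs}}|^2\big)\big|^2\diff k+2\!\int_{|k|>\Lambda}\!W\,\big|\FF(|u_{\mathrm{gs}}|^2)\big|^2\diff k .
\]
The second term tends to $0$ by dominated convergence, since $\int_{\R^3}W|\FF(|u_{\mathrm{gs}}|^2)|^2<\infty$ (consequence of $u_{\mathrm{gs}}\in\mathcal{Q}_V$ and the estimate $\int W|\FF(\rho)|^2\lesssim\|W_1\|_{L^1}\|\rho\|_{L^1}^2+\|W_2\|_{L^{3,\infty}}\|\rho\|_{L^{3/2}}^2$, from the weak H\"older and Hausdorff--Young inequalities, cf.\ Lemma~\ref{lm:JV}). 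The first term is controlled by the same estimate applied to $\rho_\Lambda:=|u_{\Lambda,\mathrm{gs}}|^2-|u_{\mathrm{gs}}|^2$, with $\|\rho_\Lambda\|_{L^1}\lesssim\|u_{\Lambda,\mathrm{gs}}-u_{\mathrm{gs}}\|_{L^2}$ and $\|\rho_\Lambda\|_{L^{3/2}}\lesssim\|u_{\Lambda,\mathrm{gs}}-u_{\mathrm{gs}}\|_{L^3}\big(\|u_{\Lambda,\mathrm{gs}}\|_{H^1}+\|u_{\mathrm{gs}}\|_{H^1}\big)$, both of which tend to $0$ by the $\mathcal{Q}_V$-convergence and the Sobolev embedding $H^1(\R^3)\hookrightarrow L^3(\R^3)$. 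This gives $\|(u_{\Lambda,\mathrm{gs}},f_{\Lambda,\mathrm{gs}})-(u_{\mathrm{gs}},f_{\mathrm{gs}})\|_{\mathcal{Q}_V\times\mathcal{Z}_\omega}\to0$. The main obstacle is the compactness step of the third paragraph: since $W-W_\Lambda=W\mathds{1}_{|k|>\Lambda}$ is in general \emph{not} small in $L^{3,\infty}$ (already for $W(k)\sim|k|^{-1}$), there is no perturbative shortcut and one must genuinely re-run the concentration--compactness argument — with the Lorentz-space estimates — for the sequence $(u_{\Lambda_n,\mathrm{gs}})_n$; a secondary point demanding care is the uniformity in $\Lambda$ of the coercivity constants and of the smallness thresholds.
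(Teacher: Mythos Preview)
Your approach is correct and genuinely different from the paper's. The paper works directly with the nonlinear Hartree eigenvalue equation: it writes $u=\alpha u_V+\varphi$ and $u_\Lambda=\alpha_\Lambda u_V+\varphi_\Lambda$, uses the representation $\varphi=2R_{\lambda_V}\Pi_V^\perp(\bar{\FF}(W)*|u|^2)u$ (and similarly for $\varphi_\Lambda$), and then estimates $\|\varphi-\varphi_\Lambda\|_{\mathcal{Q}_V}$ by a resolvent identity, splitting the difference into three pieces $T_1,T_2,T_3$ and closing a contraction-type inequality $(1-C_V\|W\|)\|u-u_\Lambda\|_{\mathcal{Q}_V}\le(\text{terms}\to0)$, where the convergence of the remaining terms comes from Lemma~\ref{lm:Leb} and Proposition~\ref{prop:conv-GS-en}. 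Your route instead observes that $J\le J_\Lambda$ pointwise (by monotonicity of $\int W_\Lambda|\FF(|u|^2)|^2$ in $\Lambda$) and that $\inf J_\Lambda\to\inf J$, so $(u_{\Lambda,\mathrm{gs}})$ is a minimizing sequence for $J$; you then feed this into the compactness machinery of Appendix~\ref{app:Hartree} and invoke uniqueness. This is conceptually cleaner and avoids redoing the resolvent bookkeeping; the paper's argument, by contrast, is more self-contained in Section~\ref{subsec:UV} and in principle could give quantitative control of $\|u-u_\Lambda\|_{\mathcal{Q}_V}$ in terms of $|E_V-E_{V,\Lambda}|$ and the tail quantities of Lemma~\ref{lm:Leb}.

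One point to tighten: Appendix~\ref{app:Hartree} establishes, for a minimizing sequence, weak convergence in $\mathcal{Q}_V$ together with strong convergence in $L^2$ and that the limit is a minimizer; it does \emph{not} explicitly prove strong $\mathcal{Q}_V$-convergence, which you invoke. The missing step is short but should be stated: since the quartic term converges (Step~5 of Appendix~\ref{app:Hartree}) and $J(u_{\Lambda_n,\mathrm{gs}})\to J(u_{\mathrm{gs}})$, one gets $\langle u_{\Lambda_n,\mathrm{gs}},H_V u_{\Lambda_n,\mathrm{gs}}\rangle\to\langle u_{\mathrm{gs}},H_V u_{\mathrm{gs}}\rangle$, and then weak convergence plus norm convergence in the Hilbert space $(\mathcal{Q}_V,\langle\cdot,(H_V-\mu_V+1)\cdot\rangle)$ yields strong convergence in $\mathcal{Q}_V$. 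A second minor point: in the phase identification, you pass from $\langle u_*,u_V\rangle\ge0$ to $u_*=u_{\mathrm{gs}}$; this is fine because $|\langle u_*,u_V\rangle|=|\langle u_{\mathrm{gs}},u_V\rangle|>0$ forces the overlap to be nonzero, hence strictly positive, but it is worth saying so. Your treatment of the field component, including the Lorentz-space estimate $\int W_2|\FF(\rho)|^2\lesssim\|W_2\|_{L^{3,\infty}}\|\rho\|_{L^{3/2}}^2$ (via Hausdorff--Young in Lorentz spaces $\FF:L^{3/2,3/2}\to L^{3,3/2}\subset L^{3,2}$ and weak H\"older), is correct and in fact more explicit than the paper, which simply appeals to Proposition~\ref{prop:KGS-Hartree} for this last step.
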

The proofs of Propositions \ref{prop:conv-GS-en-intro} and \ref{prop:conv-GS2-en-intro} are not straightforward. The main difficulty comes from the fact that, in general, $W_\Lambda$ does \emph{not} converge to $W$ in $L^1+L^{3,\infty}$. To circumvent this difficulty, we use a convergence property in a weaker sense, based on a suitable application of Lebesgue's dominated convergence theorem.

\subsection{Organisation of the paper}
Our paper is essentially self-contained. It is organized as follows. Section \ref{sec:prelim} is a preliminary section containing several technical estimates that we subsequently use in Section~\ref{sec:linear} to establish our main results. In Appendix \ref{app:Fock}, we recall the definitions of standard objects related to second quantization as well as the self-adjointness of the Pauli-Fierz Hamiltonian~$\mathbb{H}$. Appendix \ref{app:Hartree} contains a proof of the existence of a minimizer for the Hartree energy functional under our conditions.

\medskip

\noindent \textbf{Acknowledgements.} The research of S.B.~was partly done during a CNRS sabbatical semester.

\section{Preliminaries}\label{sec:prelim}

In this preliminary section, we gather several technical estimates that are useful for our concern. The first subsection mainly concerns the electronic Hamiltonian $H_V$. In a second subsection, we prove some functional estimates in Lorentz spaces that are used in Section \ref{sec:linear} in a crucial way to control the interactions terms of the Klein--Gordon--Schr\"odinger energy functional.

\subsection{Estimates on the electronic part}\label{subsec:Estimate-elec}

Recall that our assumptions on the external potential $V$ of the electronic Hamiltonian $H_V=-\Delta+V$ have been introduced in Section \ref{subsec:elec}. We begin with a few remarks showing that $H_V$ is well-defined and that its form domain satisfies $\mathcal{Q}(H_V)=\mathcal{Q}(H_{V_+}) = \mathcal{Q}(H_{V_1})$, with $V_1$ as in Hypothesis \ref{condV}.

First, $V_-$ is form bounded with respect to $-\Delta$ with a relative bound less than $1$, by Hypothesis \ref{condV}. Thus $V_-$ is also form bounded with respect to $H_{V_+}$ with a relative bound less than $1$, and hence the KLMN Theorem (see e.g.~\cite[Theorem X.17]{ReedSimonII}) implies that $H_V$ identifies with a semi-bounded self-adjoint operator with form domain $\mathcal{Q}(H_V) = \mathcal{Q}(H_{V_+})$.

Next, Hypothesis \ref{condV}(ii) implies that $V_2$ is relatively form bounded with respect to $-\Delta$ with relative bound $0$. Indeed, for $R$ large enough, we have $V_2\mathds{1}_{|x|\ge R} \in L^\infty(\mathbb{R}^3)$ since $V_2(x)\to0$ as $|x|\to\infty$, while $V_2\mathds{1}_{|x|\le R}\in L^{3/2}(B_R)$ with $B_R:=\{x\in\mathbb{R}^3\, | \, |x|\le R\}$, since $V_2\in L^{3/2}_{\mathrm{loc}}(\mathbb{R}^3)$. Therefore $V_2\in L^{3/2}(\mathbb{R}^3) + L^\infty(\mathbb{R}^3)$ and hence we can apply \cite[Theorem X.19]{ReedSimonII} to deduce that $V_2$ is infinitesimally form-bounded with respect to $-\Delta$.  In turn, since $V_+-V_1=V_2+V_-$ is form bounded with respect to $-\Delta$, it is not difficult to verify that $\mathcal{Q}(H_{V_+}) = \mathcal{Q}(H_{V_1})$.

Recall the notation $\mathcal{Q}_V=\mathcal{Q}(H_V)$. We begin with the following easy lemma.
\begin{lemma}\label{lm:relative-bound}
Suppose that $V$ satisfies Hypothesis \ref{condV}. Then, for all $u$ in~$\mathcal{Q}_V$,
\begin{equation}\label{eq:estimH1}
\|u\|_{\dot{H}^1}^2\le\frac{1}{1-a}\big(\langle u,H_Vu\rangle+b\|u\|_{L^2}^2\big).
\end{equation}
\end{lemma}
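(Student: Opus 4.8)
The plan is to unwind the definitions and use the form-boundedness hypothesis on $V_-$ directly. Write, for $u \in \mathcal{Q}_V$,
\[
\langle u, H_V u\rangle = \|u\|_{\dot H^1}^2 + \langle u, V_+ u\rangle - \langle u, V_- u\rangle,
\]
where all three terms on the right are well-defined and finite because $u \in \mathcal{Q}_V = \mathcal{Q}(H_{V_+})$ and $V_-$ is infinitesimally (hence in particular relatively) form bounded with respect to $-\Delta$. Since $V_+ \ge 0$, the middle term is non-negative and can simply be dropped, giving $\langle u, H_V u\rangle \ge \|u\|_{\dot H^1}^2 - \langle u, V_- u\rangle$.

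Next I would invoke Hypothesis \ref{condV}, which states precisely that $V_- \le -a\Delta + b$ in the sense of quadratic forms on $H^1(\mathbb{R}^3)$; applied to $u$ this reads $\langle u, V_- u\rangle \le a\|u\|_{\dot H^1}^2 + b\|u\|_{L^2}^2$. Substituting this into the previous inequality yields
\[
\langle u, H_V u\rangle \ge \|u\|_{\dot H^1}^2 - a\|u\|_{\dot H^1}^2 - b\|u\|_{L^2}^2 = (1-a)\|u\|_{\dot H^1}^2 - b\|u\|_{L^2}^2.
\]
Since $0 \le a < 1$, we have $1 - a > 0$, so rearranging gives
\[
\|u\|_{\dot H^1}^2 \le \frac{1}{1-a}\big(\langle u, H_V u\rangle + b\|u\|_{L^2}^2\big),
\]
which is exactly \eqref{eq:estimH1}.

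There is essentially no obstacle here; the only mild point requiring care is the justification that the decomposition $\langle u, H_V u\rangle = \|u\|_{\dot H^1}^2 + \langle u, V_+ u\rangle - \langle u, V_- u\rangle$ is legitimate as an identity of finite quantities for every $u \in \mathcal{Q}_V$ — this is guaranteed by the remarks preceding the lemma, namely that $\mathcal{Q}_V = \mathcal{Q}(H_{V_+})$ (so $\langle u, V_+ u\rangle < \infty$) and that $V_-$ is form bounded relative to $-\Delta$ with relative bound $a < 1$ (so $\langle u, V_- u\rangle < \infty$ for $u \in H^1$). Given these facts, the argument is a one-line manipulation of quadratic forms.
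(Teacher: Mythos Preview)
Your proof is correct and follows essentially the same approach as the paper: drop the non-negative term $\langle u, V_+ u\rangle$, apply the form bound $V_- \le -a\Delta + b$ from Hypothesis~\ref{condV}, and rearrange. The only difference is cosmetic---you spell out the finiteness justifications a bit more explicitly than the paper does.
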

\begin{proof}
The positivity of $V_+$ and the bound on $V_-$ from Hypothesis \ref{condV} yield, for $u$ in~$\mathcal{Q}_V$,
\begin{multline*}
\langle u , H_V u \rangle \ge \|u\|^2_{\dot{H}^1}-\langle u,V_-u\rangle 
\ge \|u\|^2_{\dot{H}^1}-a\langle u,-\Delta u\rangle-b\|u\|_{L^2}^2 = (1-a)\|u\|^2_{\dot{H}^1}-b\|u\|_{L^2}^2,
\end{multline*}
which proves the result.
\end{proof}

The next lemma shows that, for confining potentials~$V$, the gap $\mu_{V_1}-\mu_V$ can be made as large as we want, provided that the potential $V_1$ is suitably chosen. 

\begin{lemma}\label{lm:confining_intro}
Suppose that $V=V_+-V_-$ is such that
\begin{enumerate}[label=(\roman*)]
\item $V_+\in L^1_{\mathrm{loc}}(\mathbb{R}^3)$, \vspace{0,1cm}
\item $V_-\in L^{3/2}_{\mathrm{loc}}(\mathbb{R}^3)$, \vspace{0,1cm}
\item $V(x)\to\infty$ as $|x|\to\infty$.
\end{enumerate}
Then, for all $C>0$, there exist a decomposition $V=V_{1,C}+V_{2,C}$ as in Hypothesis~\ref{condV} such that, moreover,
\begin{equation*}
\mu_{V_{1,C}}-\mu_V\ge C.
\end{equation*}
\end{lemma}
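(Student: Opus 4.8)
The goal is to exhibit, for any given $C>0$, a splitting $V=V_{1,C}+V_{2,C}$ satisfying Hypothesis~\ref{condV} and with $\mu_{V_{1,C}}\ge\mu_V+C$. The natural idea is to take $V_{1,C}$ to be (essentially) $V_+$ with a large constant added on a compact set, or rather $V_+$ itself but \emph{with the negative part moved into $V_{2,C}$ and with an additional large bump}; more precisely I would set
\begin{equation*}
V_{2,C}:=-V_-\,\tilde\eta_R-\kappa\,\eta_R,\qquad V_{1,C}:=V-V_{2,C}=V_++V_-\eta_R+\kappa\,\eta_R,
\end{equation*}
for suitable $R=R(C)$ large and $\kappa=\kappa(C)$ large, using the cutoffs $\eta_R,\tilde\eta_R$ from \eqref{eq:defetaR}. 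Here $V_-\tilde\eta_R$ is bounded (since $V(x)\to\infty$ forces $V_-$ to be supported, up to a bounded error, in a compact set, and is $L^{3/2}_{\mathrm{loc}}$ there), so $V_{2,C}\in L^{3/2}(\mathbb{R}^3)+L^\infty$, it has compact support, and $V_{2,C}(x)\to0$ as $|x|\to\infty$; thus Hypothesis~\ref{condV}(ii) holds. And $V_{1,C}\ge0$ for $R$ large enough because outside $B_R$ we have $V\ge0$ while inside we have added back $V_-\eta_R$, so $V_{1,C}=V_++V_-\eta_R+\kappa\eta_R\ge0$; it is in $L^1_{\mathrm{loc}}$ since $V_+,V_-\in L^1_{\mathrm{loc}}$. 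The relative form bound $V_-\le-a\Delta+b$ with $a<1$ required in Hypothesis~\ref{condV} will be clear here since in fact the negative part of $V$ is $V_-$ itself, which is $L^{3/2}_{\mathrm{loc}}$ and bounded at infinity, hence infinitesimally form bounded with respect to $-\Delta$ by \cite[Theorem X.19]{ReedSimonII}, so $a$ can be taken arbitrarily small; this does not depend on $C$.

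It remains to arrange the gap $\mu_{V_{1,C}}-\mu_V\ge C$. Since $V_{1,C}\ge V_++\kappa\,\eta_R\ge V_+$, monotonicity of the ground state energy gives $\mu_{V_{1,C}}\ge\mu_{V_+}$, but that alone is not enough; the point of the $\kappa\eta_R$ bump is irrelevant (it only raises the energy on a compact set, which an approximate ground state avoids). The correct mechanism is to use the confining hypothesis $V(x)\to\infty$ directly on $V_{1,C}\ge V_+$: write the variational principle
\begin{equation*}
\mu_{V_{1,C}}=\inf_{\|u\|_{L^2}=1}\Big(\|u\|_{\dot H^1}^2+\int V_{1,C}|u|^2\Big)\ge\inf_{\|u\|_{L^2}=1}\Big(\|u\|_{\dot H^1}^2+\int V_+|u|^2\Big)=\mu_{V_+}.
\end{equation*}
Now I claim $\mu_{V_+}=+\infty$ is false in general, so instead I should \emph{not} drop to $V_+$ but keep a large bump: set $V_{1,C}:=V_++V_-\eta_R+(C+C_0)\eta_R$ where $C_0$ absorbs $\mu_V$ and $b$. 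Hmm — but again a minimizing sequence for $\mu_{V_{1,C}}$ will concentrate outside $B_R$ where $V_{1,C}=V_+$ may be small. The genuinely correct route, and the one I would commit to, is: because $V(x)\to\infty$, for any $M$ there is $R_M$ with $V(x)\ge M$ for $|x|\ge R_M$; take $V_{1,C}:=V_++V_-\eta_{R_M}+M\eta_{R_M}$, so that $V_{1,C}\ge M$ everywhere (inside $B_{R_M}$ from the $M\eta_{R_M}$ term together with $V_+\ge0$, outside from $V_+=V\ge M$). Then $\mu_{V_{1,C}}\ge M\ge\mu_V+C$ upon choosing $M=\mu_V+C$ (and noting $\mu_V$ is finite by Hypothesis~\ref{condV}'s consequences). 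This is clean.

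\textbf{Steps, in order.} (1) Use $V\to\infty$ to fix $R=R_M$ with $V\ge M:=\mu_V+C$ on $B_R^c$, where $\mu_V=\inf\sigma(H_V)>-\infty$ is finite by the remarks following Hypothesis~\ref{condV}. (2) Define $V_{1,C}:=V_++V_-\eta_R+M\eta_R$ and $V_{2,C}:=-V_-\tilde\eta_R-M\eta_R=V-V_{1,C}$. (3) Check Hypothesis~\ref{condV} for this splitting: the negative part of $V$ is still just $V_-$, infinitesimally $-\Delta$-form bounded (so $0\le a<1$, $b$ exist) by \cite[Theorem X.19]{ReedSimonII} using $V_-\in L^{3/2}_{\mathrm{loc}}$ and $V_-(x)\to0$; $V_{1,C}\ge0$ and $V_{1,C}\in L^1_{\mathrm{loc}}$; $V_{2,C}\in L^{3/2}(\mathbb{R}^3)$ (compact support, $L^{3/2}_{\mathrm{loc}}$ since $V_-\tilde\eta_R\in L^{3/2}_{\mathrm{loc}}$ and $M\eta_R\in L^\infty_c\subset L^{3/2}$) with $V_{2,C}(x)\to0$ as $|x|\to\infty$; and (iii) $\mu_V<\mu_{V_{1,C}}$ will follow from step (4). (4) Prove $\mu_{V_{1,C}}\ge M$: for $\|u\|_{L^2}=1$, $\langle u,H_{V_{1,C}}u\rangle\ge\int V_{1,C}|u|^2\ge M\int|u|^2=M$, using $V_{1,C}\ge M$ pointwise and $\|\nabla u\|^2\ge0$. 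Hence $\mu_{V_{1,C}}\ge M=\mu_V+C>\mu_V$, giving both (iii) and the conclusion.

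\textbf{Main obstacle.} The only subtle point is verifying that $V_{1,C}\ge M$ \emph{everywhere}: on $B_R$ this uses $V_+\ge0$ together with the added $M\eta_R=M$; on $B_R^c$ one has $\eta_R=0$ and $V_-\eta_R=0$, so $V_{1,C}=V_+=V\ge M$ there precisely by the choice of $R$ (on $B_R^c$, $V\ge M>\mu_V$; if $\mu_V\ge0$ then $V=V_+$ there, and if $\mu_V<0$ one may as well take $M>0$ and still $V_+\ge V\ge M$). One must also double-check the intermediate annulus $R\le|x|\le 2R$ where $0\le\eta_R\le1$: there $V_{1,C}=V_++V_-\eta_R+M\eta_R\ge V_++M\eta_R$; since on this annulus $V=V\ge M$ (as $|x|\ge R$) we get $V_+\ge V-V_-\eta_R\cdot 0\ge\dots$ — cleanest is to note $V_{1,C}=V_++V_-\eta_R+M\eta_R=V+V_-\tilde\eta_R^{\,c}$-type bookkeeping; concretely $V_{1,C}=V+(V_--V_-\tilde\eta_R)+M\eta_R\ge V+M\eta_R\ge V\ge M$ on $\{|x|\ge R\}$ since $V_-(1-\tilde\eta_R)=V_-\,(1-\tilde\eta_R)\ge0$. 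So $V_{1,C}\ge M$ throughout, and the proof is complete. (The formulas above are best organized by writing $V_{1,C}=V+V_-\eta_R+M\eta_R$ directly, whence $V_{1,C}\ge V$ everywhere and $V_{1,C}\ge M$ on $B_R$ and $\ge V\ge M$ on $B_R^c$.)
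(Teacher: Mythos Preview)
Your approach is correct in spirit and is genuinely more elementary than the paper's. You arrange $V_{1,C}\ge M$ \emph{pointwise} (with $M>\mu_V+C$, $M>0$), whence $\mu_{V_{1,C}}\ge M$ follows immediately from the variational principle by dropping the kinetic term. The paper instead takes $V_{1,C}=V_++2C\eta_R^2$, which is \emph{not} bounded below by a large constant on $B_R$ (only by $2C$), and then needs the IMS localization formula to show $\mu_{V_{1,C}}\ge\mu_{V_+}+2C+o(R^0)$, followed by $\mu_{V_+}\ge\mu_V$. Your pointwise lower bound bypasses IMS entirely; the price is the extra $V_-\eta_R$ term you add back, which costs nothing since $V_-\in L^{3/2}_{\mathrm{loc}}$ has compact support.

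That said, your formulas are internally inconsistent and need to be cleaned up. With the paper's convention $\eta_R^2+\tilde\eta_R^2=1$ from~\eqref{eq:defetaR} (or even with $\eta_R+\tilde\eta_R=1$), the claimed identity $V-(-V_-\tilde\eta_R-M\eta_R)=V_++V_-\eta_R+M\eta_R$ is false. If you keep your Step~2 choice $V_{1,C}=V_++V_-\eta_R+M\eta_R$, then $V_{2,C}=V-V_{1,C}=-V_-(1+\eta_R)-M\eta_R$, not $-V_-\tilde\eta_R-M\eta_R$. At the end you silently switch to $V_{1,C}=V+V_-\eta_R+M\eta_R$, which is a different function (it differs from the previous one by $V_-$) and forces $V_{2,C}=-V_-\eta_R-M\eta_R$. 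Either of these two choices works once you fix $M>0$ with $M\ge\mu_V+C$ and $R$ so large that $V\ge M$ on $\{|x|\ge R\}$ (hence $V_-=0$ there); both resulting $V_{2,C}$ are compactly supported and in $L^{3/2}_{\mathrm{loc}}$, and both $V_{1,C}$ are nonnegative, in $L^1_{\mathrm{loc}}$, and satisfy $V_{1,C}\ge M$ everywhere. But you should pick one and carry it through consistently rather than oscillating between three incompatible expressions.
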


\begin{proof}
Recall that the localizations functions $\eta_R$, $\tilde{\eta}_R$ have been defined in \eqref{eq:defetaR}.
Let $C>0$. We set
\begin{equation*}
V_{1,C}=V_{+}+2C\eta_R^2, \quad V_{2,C}=-V_{-}-2C\eta_R^2.
\end{equation*}
Observe that $V_{1,C}+V_{2,C}=V_+-V_-=V$. Moreover, since $V(x)\to\infty$ as $|x|\to\infty$, we have that $V_-(x)=0$ for $|x|$ large enough. Hence, since in addition $\eta_R^2$ is smooth and compactly supported, one sees that the decomposition $V=V_{1,C}+V_{2,C}$ satisfies the conditions of Hypothesis~\ref{condV} for any~$R$.

Now we verify that $\mu_{V_{1,C}}-\mu_V>C$ for suitably chosen $R$. Using the IMS localization formula (see e.g.~\cite{CFKS87}), we write
\begin{align*}
\mu_{V_{1,C}} & =\inf_{u\in \mathcal{U}}\Big(\langle(\eta_{R}^{2}+\tilde{\eta}_{R}^{2})u,(-\Delta+V_+)u\rangle+2C\|\eta_{R}u\|_{L^2}^{2}\Big)\\
 & =\inf_{u\in \mathcal{U}}\Big(\langle\eta_{R}u,(-\Delta+V_{+})\eta_{R}u\rangle+\langle\tilde{\eta}_{R}u,(-\Delta+V_{+})\tilde{\eta}_{R}u\rangle+o(R^0)+2C\|\eta_{R}u\|_{L^2}^{2}\Big),
 \end{align*}
since $|\vec{\nabla}\eta_{R}|^{2}+|\vec{\nabla}\tilde{\eta}_{R}|^{2}=o(R^0)$, $R\to\infty$. Next, using that $-\Delta\ge0$ and that $\mathrm{supp}(\tilde{\eta}_R)\subset B(0,R)^c$, we estimate
 \begin{align*}
 \mu_{V_{1,C}} &  \geq\inf_{u\in \mathcal{U}}\Big(\big( \mu_{V_{+}} +2C\big )\|\eta_{R}u\|_{L^2}^{2}+\Big(\inf_{x\in B(0,R)^{c}}V_{+}(x) \Big)\|\tilde{\eta}_{R}u\|_{L^2}^{2}+o(R^0)\Big).
\end{align*}
Since $V_+(x)\to\infty$ as $|x|\to\infty$, there exists $R_{0}>0$ such that for $R\geq R_{0}$, 
\begin{equation*}
\inf_{x\in B(0,R)^{c}}V_{+}(x)\geq \mu_{V_{+}}+2C.
\end{equation*}
Therefore, for $R\geq R_{0}$, we obtain
\begin{align}\label{eq:estim1}
\mu_{V_{1,C}} & \geq\inf_{u\in\mathcal{U}}\Big(\big(\mu_{V_{+}}+2C\big)\big(\|\eta_{R}u\|_{L^2}^{2}+\|\tilde{\eta}_{R}u\|_{L^2}^{2}\big)+o(R^0)\Big) = \mu_{V_{+}}+2C+o(R^0).
\end{align}
On the other hand, since $V_-\ge0$, we have that
\begin{equation}\label{eq:estim2}
\mu_{V_{+}}\geq\inf_{u\in\mathcal{U}}\langle u,(-\Delta+V_{+}-V_{-})u\rangle=\mu_{V}.
\end{equation}
Combining \eqref{eq:estim1} and \eqref{eq:estim2} gives
\begin{equation*}
\mu_{V_{1,C}}\geq \mu_{V}+2C+o(R^0).
\end{equation*}
Fixing $R$ large enough, we deduce that $\mu_{V_{1,C}}-\mu_V>C$, which proves the lemma.
\end{proof}

To conclude this section, we give a lemma which is useful to prove the existence of minimizers for the energy functional studied in Section \ref{sec:linear}.

\begin{lemma}
\label{lem:LowerSemiContDeltaV}Suppose that $V$ satisfies Hypothesis~\ref{condV}. Let $(u_{j})_{j\in\mathbb{N}}$ be a bounded sequence in~$H^{1}(\mathbb{R}^{3})$ which  converges weakly to $u_{\infty}$ in $H^1(\mathbb{R}^3)$, and strongly in~$L^{2}(\mathbb{R}^{3})$. Then 
\begin{equation*}
\langle u_{\infty},(-\Delta+V)u_{\infty}\rangle\leq\liminf_{j\to\infty}\langle u_{j},(-\Delta+V)u_{j}\rangle\,.
\end{equation*}
\end{lemma}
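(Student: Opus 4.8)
The plan is to prove lower semicontinuity term by term, handling the kinetic part, the positive part $V_+$ (or $V_1$), and the negative part $V_-$ separately, since these behave differently under weak convergence.

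First I would split the quadratic form as $\langle u, (-\Delta+V)u\rangle = \|u\|_{\dot H^1}^2 + \langle u, V_1 u\rangle + \langle u, V_2 u\rangle$, using the decomposition $V = V_1 + V_2$ from Hypothesis~\ref{condV} (recall $V_1\ge 0$ and $V_2\in L^{3/2}_{\mathrm{loc}}$ vanishing at infinity). For the kinetic term, $u\mapsto\|u\|_{\dot H^1}^2$ is convex and strongly (hence weakly) lower semicontinuous on $H^1$, so $\|u_\infty\|_{\dot H^1}^2\le\liminf_j\|u_j\|_{\dot H^1}^2$ directly from weak convergence in $H^1$. For the term $\langle u, V_1 u\rangle = \int V_1|u|^2$, since $V_1\ge 0$ and $u_j\to u_\infty$ strongly in $L^2$, we may pass to a subsequence converging a.e., and Fatou's lemma gives $\int V_1|u_\infty|^2\le\liminf_j\int V_1|u_j|^2$; here I should be a little careful to argue along an arbitrary subsequence to recover the full $\liminf$, which is a standard subsequence-of-subsequence maneuver.

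The remaining term $\langle u, V_2 u\rangle$ is where actual continuity (not just semicontinuity) holds, and this is the main point. Since $V_2\in L^{3/2}(\mathbb{R}^3)+L^\infty(\mathbb{R}^3)$ and vanishes at infinity, I would write $V_2 = V_2\mathds{1}_{|x|\le R} + V_2\mathds{1}_{|x|>R}$ and exploit that $V_2$ is infinitesimally form-bounded with respect to $-\Delta$ (established in the discussion preceding Lemma~\ref{lm:relative-bound}). The tail piece $V_2\mathds{1}_{|x|>R}$ has small $L^{3/2}+L^\infty$ norm for large $R$, hence small relative form bound uniformly in $j$ (using boundedness of $(u_j)$ in $H^1$), so its contribution is $O(\varepsilon)$ uniformly. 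For the compactly supported piece, the sequence $(u_j)$ is bounded in $H^1(B_R)$ and converges strongly in $L^2(B_R)$; by Rellich and interpolation (or directly, since $V_2\mathds{1}_{|x|\le R}\in L^{3/2}$ and $H^1(B_R)\hookrightarrow L^6(B_R)$ compactly into $L^p$ for $p<6$), one gets $\int_{|x|\le R} V_2(|u_j|^2 - |u_\infty|^2)\to 0$. Combining, $\langle u_j, V_2 u_j\rangle\to\langle u_\infty, V_2 u_\infty\rangle$, which is stronger than needed.

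Putting the three pieces together: the kinetic and $V_1$ terms are lower semicontinuous and the $V_2$ term is continuous, so their sum is lower semicontinuous, i.e.
\begin{equation*}
\langle u_\infty, (-\Delta+V)u_\infty\rangle \le \liminf_{j\to\infty}\|u_j\|_{\dot H^1}^2 + \liminf_{j\to\infty}\int V_1|u_j|^2 + \lim_{j\to\infty}\int V_2|u_j|^2 \le \liminf_{j\to\infty}\langle u_j,(-\Delta+V)u_j\rangle,
\end{equation*}
using superadditivity of $\liminf$. The main obstacle is the careful treatment of the $V_2$ term, specifically making the tail estimate uniform in $j$ and justifying the convergence on the ball; this requires the infinitesimal form-boundedness of $V_2$ together with the uniform $H^1$ bound, and one must be mindful that $V_2$ can be quite singular locally (only $L^{3/2}_{\mathrm{loc}}$), so the compact Sobolev embedding into $L^p$ with $p$ slightly below $6$ and Hölder against $L^{3/2}$ is the right tool rather than a naive $L^\infty$ argument.
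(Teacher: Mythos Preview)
Your proposal is correct and follows essentially the same route as the paper: the same three-term split $-\Delta + V_1 + V_2$, weak lower semicontinuity of the $\dot H^1$ norm for the kinetic part, Fatou for the nonnegative $V_1$, and genuine continuity for $V_2$ via a tail/compact decomposition. The only cosmetic difference is in the compact piece of $V_2$: the paper approximates $\mathds{1}_{|x|\le R_0}V_2$ in $L^{3/2}$ by a $C_0^\infty$ function and then uses strong $L^2$ convergence against the bounded approximant, whereas you invoke Rellich/interpolation directly---both arguments close the same gap.
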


\begin{proof}
We consider each term of
\begin{equation}
\langle u,-\Delta u\rangle+\langle u,V_{1}u\rangle+\langle u,V_{2}u\rangle\label{eq:d0a}
\end{equation}
separately.

The first one is handled using the lower semi-continuity of $\|\cdot\|_{L^2}$.
Indeed, as~$u_{j}\to u_{\infty}$ weakly in $H^{1}(\mathbb{R}^{3})$,
it follows that $\nabla u_{j}\to\nabla u_{\infty}$ weakly in $L^{2}(\mathbb{R}^{3})$
and hence 
\begin{equation}
\langle u_{\infty},-\Delta u_{\infty}\rangle=\|\nabla u_{\infty}\|_{L^2}^{2}\le\liminf_{j\to\infty}\|\nabla u_{j}\|_{L^2}^{2}=\liminf_{j\to\infty}\langle u_{j},-\Delta u_{j}\rangle.\label{eq:d1}
\end{equation}
For the second term of \eqref{eq:d0a}, we use Fatou's Lemma, which
gives, since $V_{1}\ge0$, 
\begin{equation}
\langle u_{\infty},V_{1}u_{\infty}\rangle\le\liminf_{j\to\infty}\langle u_{j},V_{1}u_{j}\rangle.\label{eq:d2}
\end{equation}
As for the third term in \eqref{eq:d0a}, we claim that 
\begin{equation}
\langle u_{\infty},V_{2}u_{\infty}\rangle=\lim_{j\to\infty}\langle u_{j},V_{2}u_{j}\rangle,\label{eq:d3}
\end{equation}
for some suitable subsequence that we keep denoting by $(u_{j})_{j\in\mathbb{N}}$.
Indeed, let $\varepsilon>0$. We have that $\|\mathds{1}_{|x|>R_{0}}V_{2}\|_{\infty}\le\varepsilon$
for $R_{0}$ large enough, since $V_{2}(x)\to0$ as $|x|\to\infty$.
Therefore, for all~$j$ in~$\mathbb{N}$, 
\begin{equation}
\langle u_{j},\mathds{1}_{|x|>R_{0}}V_{2}u_{j}\rangle\le\varepsilon,\quad\langle u_{\infty},\mathds{1}_{|x|>R_{0}}V_{2}u_{\infty}\rangle\le\varepsilon.\label{eq:d4}
\end{equation}
Next, we approximate $\mathds{1}_{|x|\le R_{0}}V_{2}$ by a more regular
function. More precisely, since $\mathds{1}_{|x|\le R_{0}}V_{2}$ lies in~$L^{3/2}(\mathbb{R}^{3})$,
one can find $V_{2,\varepsilon}$ in~${C}_{0}^{\infty}(\mathbb{R}^{3})$
such that 
\[
\big\|\mathds{1}_{|x|\le R_{0}}V_{2}-V_{2,\varepsilon}\big\|_{L^{3/2}}\le\varepsilon.
\]
H{\"o}lder's inequality together with Sobolev's embedding $H^{1}(\mathbb{R}^{3})\subset L^{6}(\mathbb{R}^{3})$
then yield 
\begin{align}
\big|\langle u_{j},\mathds{1}_{|x|\le R_{0}}V_{2}u_{j}\rangle-\langle u_{j},\mathds{1}_{|x|\le R_{0}}V_{2,\varepsilon}u_{j}\rangle\big|&\le\big\|\mathds{1}_{|x|\le R_{0}}V_{2}-V_{2,\varepsilon}\big\|_{L^{3/2}}\|u_{j}\|_{L^6}^{2}\notag\\
&\lesssim\varepsilon\|u_{j}\|_{H^{1}}^{2}\lesssim\varepsilon,\label{eq:d43}
\end{align}
since we assumed that $(u_{j})_{j\in\mathbb{N}}$ is bounded in $H^{1}(\mathbb{R}^{3})$.
Likewise, 
\begin{align}
\big|\langle u_{\infty},\mathds{1}_{|x|\le R_{0}}V_{2}u_{\infty}\rangle-\langle u_{\infty},\mathds{1}_{|x|\le R_{0}}V_{2,\varepsilon}u_{\infty}\rangle\big|\lesssim\varepsilon.\label{eq:d44}
\end{align}
Now, since $\mathds{1}_{|x|\le R_{0}}u_{j}\to\mathds{1}_{|x|\le R_{0}}u_{\infty}$
strongly in $L^{2}(\mathbb{R}^{3})$, and since $V_{2,\varepsilon}$
is bounded, we deduce that 
\begin{equation}
\langle u_{\infty},\mathds{1}_{|x|\le R_{0}}V_{2,\varepsilon}u_{\infty}\rangle=\lim_{j\to\infty}\langle u_{j},\mathds{1}_{|x|\le R_{0}}V_{2,\varepsilon}u_{j}\rangle.\label{eq:d5}
\end{equation}
Combining \eqref{eq:d4}, \eqref{eq:d43}, \eqref{eq:d44} and \eqref{eq:d5},
we obtain \eqref{eq:d3}.
\end{proof}

\subsection{Some functional inequalities in Lorentz spaces}

In the proof of our main results, we will use in a crucial way some functional inequalities in Lorentz spaces that we present in this section. For $1\le p<\infty$, the Lorentz spaces $L^{p,\infty}=L^{p,\infty}(\mathbb{R}^d)$ are defined as the set of (equivalence classes of) measurable functions $f:\mathbb{R}^d\to\mathbb{C}$ such that \eqref{eq:norm_weakLp} holds. 

More generally, for $1\le p<\infty$ and $1\le q\leq\infty$, the Lorentz spaces $L^{p,q}=L^{p,q}(\mathbb{R}^d)$ are defined as the set of (equivalence classes of) measurable functions $f:\mathbb{R}^d\to\mathbb{C}$ such that the quasi-norm
\begin{equation*}
\|f\|_{L^{p,q}}:= p^{1/q} \| \lambda (  \{|f|>t\})^{1/p}\,t\|_{L^q ((0,\infty),\diff t / t)}  
\end{equation*}
is finite.

For $1\le p<\infty$ and $1\le q_1\leq q_2 \le \infty$, the continuous embedding $
L^{p,q_1}\subseteq L^{p,q_2}$ holds.
Moreover $L^{p,p}$ identifies with $L^p$. We will use the following generalizations of H\"older and Young's inequality in Lorentz spaces, see \cite{ONeil63,Yap69,Lemarie-Rieusset02,BezLeeNakamuraSawano17} or \cite[1.4.19]{Grafakos08}. 

For $1\leq p_1,p_2<\infty$, $1\le q_1,q_2\le\infty$, H\"older's inequality states that
\begin{equation}\label{eq:weak_Holder}
\|f_1f_2\|_{L^{p,q}}\lesssim\|f_1\|_{L^{p_1,q_1}}\|f_2\|_{L^{p_2,q_2}},\qquad \frac{1}{p}=\frac{1}{p_1}+\frac{1}{p_2}\,, \quad \frac{1}{q}=\frac{1}{q_1}+\frac{1}{q_2}\,,
\end{equation}
whenever the right hand side is finite.

Young's inequality states that, for $1<p,p_1,p_2<\infty$, $1\le q_1,q_2\le\infty$, 
\begin{equation}\label{eq:weak_Young}
\|f_1*f_2\|_{L^{p,q}}\lesssim\|f_1\|_{L^{p_1,q_1}}\|f_2\|_{L^{p_2,q_2}}\,, \qquad 1+\frac{1}{p}=\frac{1}{p_1}+\frac{1}{p_2}\,,\quad \frac{1}{q}=\frac{1}{q_1}+\frac{1}{q_2}\,,
\end{equation}
and for $1< p<\infty$, $1\le q\le\infty$,
\begin{equation}\label{eq:weak_Young2}
\|f_1*f_2\|_{L^\infty}\lesssim\|f_1\|_{L^{p,q}}\|f_2\|_{L^{p',q'}} \,, \qquad \frac{1}{p}+\frac{1}{p'}=1\,,\quad \frac{1}{q}+\frac{1}{q'}=1\,.
\end{equation}

We have the following estimates that are used several times in Section \ref{sec:linear}. The first one is an obvious application of the usual H\"older and Young inequalities. The second and third ones are close to  the Hardy-Littlewood-Sobolev inequality but cannot be directly deduced from it. Recall the convention \eqref{eq:convfourier} on the Fourier transform.

\begin{lemma}\label{lm:convol-ineq}$ $
\begin{enumerate}[label=(\roman*)]
\item Let $u_1,u_2\in L^2$ and $W\in L^1$. Then, 
\begin{equation}\label{eq:convol-ineq1}
\big\| \bar{\FF}(W)*(u_1u_2) \big \|_{L^\infty}\lesssim \|W\|_{L^1}\|u_1\|_{L^2}\|u_2\|_{L^2}.
\end{equation}
\item Let $u_1,u_2\in \dot{H}^1$ and $W\in L^{3,\infty}$. Then $W\mathcal{F}(u_1u_2) \in L^1$ and 
\begin{align}\label{eq:convol-ineq2}
&\big\| \bar{\FF}(W)*(u_1u_2) \big \|_{L^\infty} \lesssim \|W\|_{L^{3,\infty}} \|u_1\|_{\dot{H}^1} \|u_2\|_{\dot{H}^1} .
\end{align}
\item Let $u_1\in L^2$, $u_2,u_3\in \dot{H}^1$ and $W\in L^{3,\infty}$. Then $W\mathcal{F}(u_1u_2) \in L^{3/2,\infty}$ and 
\begin{align}\label{eq:convol-ineq2_0}
&\big\| \big(\bar{\FF}(W)*(u_1u_2)\big)u_3 \big \|_{L^2} \lesssim \|W\|_{L^{3,\infty}} \|u_1\|_{L^2} \|u_2\|_{\dot{H}^1}\|u_3\|_{\dot{H}^1} .
\end{align}
\end{enumerate}
\end{lemma}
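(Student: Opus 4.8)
All three estimates reduce, via the convention \eqref{eq:convfourier} rewriting $\bar{\FF}(W)*(u_1u_2)=(2\pi)^{-3}\bar{\FF}(W\FF(u_1u_2))$, to controlling $W\FF(u_1u_2)$ in an appropriate space and then applying either the elementary bound $\|\bar{\FF}(g)\|_{L^\infty}\le\|g\|_{L^1}$ or a Hausdorff--Young/Lorentz duality argument. The first item (i) is the trivial one: $u_1u_2\in L^1$ with $\|u_1u_2\|_{L^1}\le\|u_1\|_{L^2}\|u_2\|_{L^2}$, hence $\FF(u_1u_2)\in L^\infty$ with $\|\FF(u_1u_2)\|_{L^\infty}\le\|u_1u_2\|_{L^1}$; then $W\FF(u_1u_2)\in L^1$ with the obvious bound, and one more application of $\|\bar{\FF}(\cdot)\|_{L^\infty}\le\|\cdot\|_{L^1}$ closes it.

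\textbf{Item (ii).} First I would note that for $u\in\dot H^1(\mathbb{R}^3)$, Sobolev's embedding gives $u\in L^6$, so $u_1u_2\in L^3$, and then $\FF(u_1u_2)\in L^{3/2,\infty}$ by the Hausdorff--Young inequality in Lorentz form (the Fourier transform maps $L^3\to L^{3/2,\infty}$; alternatively one uses $\FF:L^{p}\to L^{p',p}\hookrightarrow L^{p',\infty}$, here $p=3$). Actually the sharper route is: $\||\xi|\,\FF(\cdot)\| $-type control is not needed; simply $\|u_j\|_{L^6}\lesssim\|u_j\|_{\dot H^1}$ and Hausdorff--Young. Then by the weak H\"older inequality \eqref{eq:weak_Holder} with exponents $\tfrac13+\tfrac23=1$ in the $p$-index and $\tfrac1\infty+\tfrac11=1$ in the $q$-index (using $L^{3,\infty}$ for $W$ and $L^{3/2,3/2}=L^{3/2}\hookrightarrow L^{3/2,\infty}$... one must be careful), one gets $W\FF(u_1u_2)\in L^{1}$ — more precisely, $W\in L^{3,\infty}$ and $\FF(u_1u_2)\in L^{3/2,3/2}\subseteq L^{3/2,\infty}$, and $\tfrac13+\tfrac23=1$, so \eqref{eq:weak_Holder} yields the product in $L^{1,q}$ with $\tfrac1q=0+1$, i.e.\ $L^{1,1}=L^1$, with norm $\lesssim\|W\|_{L^{3,\infty}}\|\FF(u_1u_2)\|_{L^{3/2}}\lesssim\|W\|_{L^{3,\infty}}\|u_1u_2\|_{L^3}\lesssim\|W\|_{L^{3,\infty}}\|u_1\|_{\dot H^1}\|u_2\|_{\dot H^1}$. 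Finally $\|\bar{\FF}(W\FF(u_1u_2))\|_{L^\infty}\le\|W\FF(u_1u_2)\|_{L^1}$ gives \eqref{eq:convol-ineq2}.

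\textbf{Item (iii).} Here the plan is to keep one factor in $L^2$ rather than using Sobolev. With $u_1\in L^2$ and $u_2\in\dot H^1\subset L^6$, interpolation/H\"older gives $u_1u_2\in L^{3/2}$ with $\|u_1u_2\|_{L^{3/2}}\le\|u_1\|_{L^2}\|u_2\|_{L^6}\lesssim\|u_1\|_{L^2}\|u_2\|_{\dot H^1}$; Hausdorff--Young (in Lorentz form: $\FF:L^{3/2}\to L^{3,3/2}\hookrightarrow L^{3,\infty}$, exponent conjugate to $3/2$ being $3$) then yields $\FF(u_1u_2)\in L^{3,3/2}\subseteq L^{3,\infty}$. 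Applying weak H\"older \eqref{eq:weak_Holder} to $W\FF(u_1u_2)$ with $W\in L^{3,\infty}$ gives a function in $L^{3/2,\infty}$ — matching the claim $W\FF(u_1u_2)\in L^{3/2,\infty}$ — with $\|W\FF(u_1u_2)\|_{L^{3/2,\infty}}\lesssim\|W\|_{L^{3,\infty}}\|u_1\|_{L^2}\|u_2\|_{\dot H^1}$. Now $\bar{\FF}$ does not map $L^{3/2,\infty}\to L^\infty$, so I would instead use the convolution inequality \eqref{eq:weak_Young} in Lorentz form: from $W\FF(u_1u_2)\in L^{3/2,\infty}$ one gets $\bar{\FF}(W)*(u_1u_2)=\bar{\FF}(W\FF(u_1u_2))\in L^{3,\infty}$ by Hausdorff--Young again (dual exponent of $3/2$ is $3$). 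Hence $\bar{\FF}(W)*(u_1u_2)\in L^{3,\infty}$ with norm $\lesssim\|W\|_{L^{3,\infty}}\|u_1\|_{L^2}\|u_2\|_{\dot H^1}$, and finally H\"older (ordinary, or weak H\"older with $\tfrac13+\tfrac16=\tfrac12$ and using $u_3\in L^6$) against $u_3\in\dot H^1\subset L^6$ — more precisely $L^{3,\infty}\cdot L^{6,2}\subseteq L^2$ with $\tfrac13+\tfrac16=\tfrac12$, $\tfrac1\infty+\tfrac12=\tfrac12$... one arranges the $q$-indices so the product lands in $L^{2,2}=L^2$ — gives $\|(\bar{\FF}(W)*(u_1u_2))u_3\|_{L^2}\lesssim\|W\|_{L^{3,\infty}}\|u_1\|_{L^2}\|u_2\|_{\dot H^1}\|u_3\|_{\dot H^1}$, as claimed.

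\textbf{Main obstacle.} The routine parts are the Sobolev embeddings and the counting of exponents; the delicate point is the repeated use of the \emph{Lorentz-space} refinement of Hausdorff--Young ($\FF:L^p\to L^{p',p}$ for $1\le p\le 2$) together with the weak H\"older and weak Young inequalities \eqref{eq:weak_Holder}--\eqref{eq:weak_Young}, since the endpoint exponents ($p=1$ in H\"older's output, $p=3/2$ feeding into Young) are exactly the borderline cases where one must check the hypotheses ($1\le q\le\infty$, strict inequality $1<p<\infty$ for Young) are met — in particular one needs the second Lorentz index to be finite at each stage so that the embeddings $L^{p,q}\hookrightarrow L^{p,\infty}$ can be used without loss, and one must verify the convention \eqref{eq:convfourier} genuinely applies, i.e.\ that $W\FF(u_1u_2)$ is a tempered distribution whose inverse Fourier transform is the stated convolution. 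I would also double-check that in (iii) the final pairing with $u_3$ uses $u_3\in L^6$ rather than $\dot H^1$ directly, the $\dot H^1$ bound entering only through Sobolev.
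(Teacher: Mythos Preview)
Your arguments for (i) and (iii) are correct. For (i) the approach is the same as the paper's (you phrase it on the Fourier side, the paper on the physical side). For (iii) your route is slightly different but valid: you return to physical space via $\bar{\FF}:L^{3/2,\infty}\to L^{3,\infty}$ and then pair with $u_3\in L^{6,2}$, whereas the paper stays on the Fourier side and applies the Lorentz--Young inequality \eqref{eq:weak_Young} to $(W\FF(u_1u_2))*\FF(u_3)$ directly, estimating $\|\FF(u_3)\|_{L^{6/5,2}}$ via $\FF(u_3)=|k|^{-1}\cdot|k|\FF(u_3)$.

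The genuine gap is in (ii). Your claim that $u_1u_2\in L^3$ implies $\FF(u_1u_2)\in L^{3/2}$ (or $L^{3/2,3}$, or $L^{3/2,\infty}$) via ``Hausdorff--Young for $p=3$'' is false: Hausdorff--Young, including its Lorentz refinement $\FF:L^{p,q}\to L^{p',q}$, is only valid for $1\le p\le 2$; for $p>2$ there exist $f\in L^p(\mathbb{R}^3)$ whose Fourier transform is not even locally integrable. Moreover, even if $\FF(u_1u_2)\in L^{3/2}=L^{3/2,3/2}$ did hold, the H\"older pairing with $W\in L^{3,\infty}$ would land in $L^{1,3/2}$, strictly larger than $L^1$ --- so your exponent bookkeeping ``$\tfrac1q=0+1$'' is inconsistent with the space you claim for $\FF(u_1u_2)$; you need the second index equal to $1$, not $3/2$.

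The paper obtains the required $\FF(u_1u_2)\in L^{3/2,1}$ by working entirely on the Fourier side: from $u_j\in\dot H^1$ one has $|k|\FF(u_j)\in L^2$, so writing $\FF(u_j)=|k|^{-1}\cdot|k|\FF(u_j)$ and using $|k|^{-1}\in L^{3,\infty}$, H\"older in Lorentz spaces gives $\FF(u_j)\in L^{6/5,2}$. Then Young's inequality \eqref{eq:weak_Young} with $1+\tfrac23=\tfrac56+\tfrac56$ and $\tfrac11=\tfrac12+\tfrac12$ yields $\FF(u_1u_2)=\FF(u_1)*\FF(u_2)\in L^{3/2,1}$; pairing with $W\in L^{3,\infty}$ now correctly lands in $L^{1,1}=L^1$.
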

\begin{proof}
To simplify formulas below, we write $w=\bar{\FF}(W)$.

(i) directly follows from the H\"older and Young inequalities:
\begin{align*}
\big\| w*(u_1u_2) \big \|_{L^\infty} 
\lesssim \| w \|_{L^\infty} \big \| u_1u_2 \big \|_{L^1} \lesssim \| w \|_{L^\infty} \| u_1\|_{L^2}\|u_2\|_{L^2} ,
\end{align*}
which yields~\eqref{eq:convol-ineq1} using $\| w\|_{L^\infty}  \lesssim \| W\|_{L^1} $.

To prove (ii), we use first the  H\"older inequality in Lorentz spaces~\eqref{eq:weak_Holder},
\begin{align*}
\big\| w*(u_1u_2) \big\|_{L^\infty} \lesssim \big\| W\,\mathcal{F}(u_1u_2) \big\|_{L^1}
\lesssim \|W\|_{L^{3,\infty}}  \big\| \mathcal{F}(u_1u_2) \big\|_{L^{3/2,1}} \,,
\end{align*}
and then both the Young \eqref{eq:weak_Young} 
and H\"older~\eqref{eq:weak_Holder} 
inequalities in Lorentz spaces
\begin{multline*}
\big\| \mathcal{F}(u_1u_2) \big\|_{L^{3/2,1}} 
\lesssim  \big\| \mathcal{F}(u_1)*\mathcal{F}(u_2) \big\|_{L^{3/2,1}} 
\lesssim   \big\| \mathcal{F}(u_1)\big\|_{L^{6/5,2}} \big\| \mathcal{F}(u_2) \big\|_{L^{6/5,2}}  \\
\lesssim  \big\| |k|^{-1} \big\|^2_{L^{3,\infty}} \big\| |k| \mathcal{F}(u_1)\big\|_{L^{2,2}} \big\| |k| \mathcal{F}(u_2) \big\|_{L^{2,2}}  
\lesssim  \|u_1\|_{\dot{H}^1} \|u_2\|_{\dot{H}^1} \, . 
\end{multline*}

To prove (iii), we use first the  Young inequality in Lorentz spaces~\eqref{eq:weak_Young},
\begin{align*}
\big\| (w*(u_1u_2))u_3\big\|_{L^2}\lesssim\big\| (W\FF(u_1u_2))*\FF(u_3)\big \|_{L^2}
\lesssim \big\| W\FF(u_1u_2)\big\|_{L^{3/2,\infty}} \big\| \FF(u_3)\big \|_{L^{6/5,2}}\,.
\end{align*}
The term with $u_3$ is controlled with the  H\"older inequality in Lorentz spaces~\eqref{eq:weak_Holder},
\begin{align*}
\big\| \FF(u_3)\big \|_{L^{6/5,2}}
\lesssim  \||k|^{-1}\|_{L^{3,\infty}}\big\| |k|\FF(u_3)\big\|_{L^{2,2}}
\lesssim \|u_3\|_{\dot{H}^1} .
\end{align*}
The term with $u_1$ and $u_2$ is estimated using the  H\"older inequality in Lorentz spaces~\eqref{eq:weak_Holder} 
first, followed by the  Young inequality in Lorentz spaces~\eqref{eq:weak_Young},
\begin{align*}
 \big\| W\FF(u_1u_2)\big\|_{L^{3/2,\infty}} 
\lesssim \|W\|_{L^{3,\infty}} \big\|\FF(u_1)*\FF(u_2)\big\|_{L^{3,\infty}} 
\lesssim \|W\|_{L^{3,\infty}} \big\|\FF(u_1)\big\|_{L^{2,\infty}} \big\|\FF(u_2)\big\|_{L^{6/5,\infty}}\,.
\end{align*}
The term
$ \|\FF(u_2)\|_{L^{6/5,\infty}} \lesssim \|\FF(u_2)\|_{L^{6/5,2}}$ is estimated in the same way as $ \|\FF(u_3)\|_{L^{6/5,2}} $, while
\begin{align*}\big\|\FF(u_1)\big\|_{L^{2,\infty}} 
\lesssim \big\|\FF(u_1)\big\|_{L^{2,2}}
\lesssim  \|u_1\|_{L^{2}} \, .
\end{align*}
This proves the lemma.
\end{proof}

\section{Proof of the main results}\label{sec:linear}

In this section, we prove the results stated in Section \ref{subsec:linear}. We begin with reducing the problem of the minimization of the Klein--Gordon--Schr\"odinger energy functional to the problem of the minimization of a well-chosen Hartree functional in Section \ref{sec:KGS-Hartree}. We prove the existence and uniqueness of a minimizer in Sections \ref{sec:exist-min-Hartree} and \ref{sec:uniq-min-Hartree}, respectively. In Section \ref{sec:asympt-Hartree}, we derive the asymptotic expansion of the ground state energy at small coupling. Finally, in Section~\ref{subsec:UV} we prove the convergence of the ground state and ground state energy in the ultraviolet limit.

Throughout this section, $C_V$ stands for a positive constant depending on $V$ which may vary from line to line.

\subsection{The Hartree energy functional}\label{sec:KGS-Hartree}

Recall that the Klein--Gordon--Schr\"odinger energy functional $\mathcal{E}(u,f)$ has been defined in~\eqref{eq:cE2}. The next proposition shows that the minimization problem for $\mathcal{E}$ reduces to the minimization problem for the Hartree energy.

\begin{proposition}\label{prop:KGS-Hartree}
Suppose that $V$ satisfies Hypothesis \ref{condV} and $W$ satisfies Hypothesis \ref{condv0:2a}. We have
\begin{align}\label{eq:E-J-1}
\inf_{u\in\mathcal{U},f\in\mathcal{Z}_\omega} \mathcal{E}(u,f)=\inf_{u\in\mathcal{U}} J(u),
\end{align}
where 
\begin{align}
J(u)&:= \ps{u}{H_V\,u}-\big\|W^{\frac12}\FF(|u|^2)\big\|_{L^2}^2 \notag\\
&=\ps{u}{H_V\,u}-\int_{\R^3}\big(\bar{\FF}\left(W\right)*\abs{u}^2\big)(x) \, \abs{u(x)}^2\diff x = \mathcal{E}(u,f_u) \, ,\label{eq:Hartree_en}
\end{align}
 and
 \begin{equation}
 f_u:=-g\omega^{-1}v\FF(|u|^2)\,. \label{eq:f_u}
 \end{equation}
Moreover $u\mapsto (u,f_u)$ is a bijection between the minimizers of $J$ and those of~$\mathcal{E}$. 
\end{proposition}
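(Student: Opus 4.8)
The plan is to start from the energy functional $\mathcal{E}(u,f)$ given by \eqref{eq:cE2} and, for fixed $u\in\mathcal{U}$, minimize explicitly over the field variable $f\in\mathcal{Z}_\omega$. First I would recall that $v$ is real-valued and, using the convention \eqref{eq:convfourier} together with the identity $\int_{\mathbb{R}^6} e^{ikx} v(k) f(k) |u(x)|^2 \diff x \diff k = (2\pi)^3 \langle \omega^{-1/2} v\,\overline{\FF(|u|^2)}, \omega^{1/2} f\rangle_{L^2}$ up to the Fourier normalization in use, rewrite the last two $f$-dependent terms of $\mathcal{E}(u,f)$ as a quadratic-plus-linear expression in the single vector $\omega^{1/2}f\in L^2(\mathbb{R}^3)$. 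Concretely, completing the square gives
\begin{equation*}
\int_{\mathbb{R}^3}\omega|f|^2\diff k + 2g\,\Real\!\int_{\mathbb{R}^6} e^{ikx}v(k)f(k)|u(x)|^2\diff x\diff k
= \big\|\omega^{1/2}f + g\,\omega^{-1/2}v\,\FF(|u|^2)\big\|_{L^2}^2 - \big\|W^{1/2}\FF(|u|^2)\big\|_{L^2}^2,
\end{equation*}
where $W=g^2\omega^{-1}v^2$. The minimum over $f$ is therefore attained exactly when $\omega^{1/2}f = -g\,\omega^{-1/2}v\,\FF(|u|^2)$, i.e. when $f=f_u$ as in \eqref{eq:f_u}, and its value is $-\|W^{1/2}\FF(|u|^2)\|_{L^2}^2$. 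Adding back the $u$-only terms $\|\vec\nabla u\|_{L^2}^2 + \int V|u|^2 = \langle u, H_V u\rangle$ yields $\mathcal{E}(u,f_u)=J(u)$ and identity \eqref{eq:E-J-1}.

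The one genuine point requiring care — and the main obstacle — is \emph{well-definedness}: I must check that all objects appearing are meaningful under Hypotheses \ref{condV} and \ref{condv0:2a}, and in particular that $f_u\in\mathcal{Z}_\omega$ so that the infimum over $\mathcal{Z}_\omega$ is actually achieved in $\mathcal{Z}_\omega$ and the pointwise completion-of-squares manipulation is licit. For $u\in\mathcal{U}\subset\mathcal{Q}_V\subset H^1(\mathbb{R}^3)$, Lemma~\ref{lm:relative-bound} gives $u\in\dot H^1$, hence by Lemma~\ref{lm:convol-ineq}(ii) we have $W\,\FF(|u|^2)\in L^1$, and since $W^{1/2}=g\,\omega^{-1/2}|v|$ one checks via the weak Hölder inequality \eqref{eq:weak_Holder} (writing $W=W_1+W_2$ with $W_1\in L^1$, $W_2\in L^{3,\infty}$, and using $\FF(|u|^2)\in L^{3/2,1}\cap L^\infty$ as in the proof of Lemma~\ref{lm:convol-ineq}) that $W^{1/2}\FF(|u|^2)\in L^2(\mathbb{R}^3)$; equivalently $\omega^{1/2}f_u = -g\,\omega^{-1/2}v\,\FF(|u|^2)$ has finite $L^2$ norm, so $f_u\in\mathcal{Z}_\omega$. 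This also shows $J(u)$ is finite for every $u\in\mathcal{U}$, so both sides of \eqref{eq:E-J-1} make sense (with values in $[-\infty,+\infty)$ a priori, but the lower bound will follow from the existence analysis; for the present proposition only finiteness of $J(u)$ for each fixed $u$ is needed). The second equality in \eqref{eq:Hartree_en}, rewriting $\|W^{1/2}\FF(|u|^2)\|_{L^2}^2 = \langle \FF(|u|^2), W\,\FF(|u|^2)\rangle = \int (\bar\FF(W)*|u|^2)|u|^2$, is just Parseval combined with the convention \eqref{eq:convfourier}, and is justified because $W\,\FF(|u|^2)\in L^1$ and $\FF(|u|^2)\in L^\infty$.

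Finally, for the bijection statement: since for each fixed $u$ the map $f\mapsto\mathcal{E}(u,f)$ is a strictly convex quadratic on the affine space $\omega^{1/2}\mathcal{Z}_\omega$ translated appropriately (the quadratic part being $\|\omega^{1/2}f\|_{L^2}^2$, which is positive definite on $\mathcal{Z}_\omega$), it has the unique minimizer $f=f_u$, with $\mathcal{E}(u,f)\ge\mathcal{E}(u,f_u)=J(u)$ and equality iff $f=f_u$. Hence if $(u_\star,f_\star)$ minimizes $\mathcal{E}$ then necessarily $f_\star=f_{u_\star}$ and $J(u_\star)=\mathcal{E}(u_\star,f_{u_\star})=\inf_u J(u)$ by \eqref{eq:E-J-1}, so $u_\star$ minimizes $J$; conversely if $u_\star$ minimizes $J$ then $\mathcal{E}(u_\star,f_{u_\star})=J(u_\star)=\inf_u J(u)=\inf_{u,f}\mathcal{E}(u,f)$, so $(u_\star,f_{u_\star})$ minimizes $\mathcal{E}$. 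Injectivity of $u\mapsto(u,f_u)$ is trivial (read off the first component) and surjectivity onto the set of minimizers of $\mathcal{E}$ is exactly the content just proved. This completes the plan; I expect the only delicate verifications to be the Lorentz-space bounds ensuring $f_u\in\mathcal{Z}_\omega$, all of which are already packaged in Lemma~\ref{lm:convol-ineq} and the inequalities \eqref{eq:weak_Holder}–\eqref{eq:weak_Young2}.
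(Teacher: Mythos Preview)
Your proposal is correct and follows essentially the same approach as the paper: complete the square in $f$ to identify the unique fieldwise minimizer $f_u$, verify $f_u\in\mathcal{Z}_\omega$ (equivalently $W^{1/2}\FF(|u|^2)\in L^2$) via the Lorentz-space estimates of Lemma~\ref{lm:convol-ineq}, and then read off both \eqref{eq:E-J-1} and the bijection between minimizers. You are in fact slightly more explicit than the paper in spelling out the bijection argument and the Parseval step for the second form of $J(u)$, but the substance is identical.
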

\begin{proof}
The Klein--Gordon--Schr\"odinger energy functional can be written under the form
\begin{align}
 \mathcal{E}(u,f)&=\langle u , H_Vu\rangle + \big\langle\omega^{\frac12}f,\omega^{\frac12}f\big\rangle+2g\Real\big\langle\omega^{-\frac12}v\FF(|u|^2),\omega^{\frac12}f\big\rangle\notag\\
&=\langle u , H_Vu\rangle+\big\|\omega^{\frac12}f+g\omega^{-\frac12}v\FF(|u|^2)\big\|_{L^2}^2-g^2\big\|\omega^{-\frac12}v\FF(|u|^2)\big\|_{L^2}^2. \label{eq:cE2_2}
\end{align}
Note that $\omega^{1/2}f$ belongs to~$L^2$ since $f$ is in~$\mathcal{Z}_\omega$. Moreover, $\omega^{-\frac12}v\FF(|u|^2)$ belongs to~$L^2$ since
\begin{align*}
g^2\big\|\omega^{-\frac12}v\FF(|u|^2)\big\|_{L^2}^2&=\big\|W\big|\FF(|u|^2)\big|^2\big\|_{L^1} \\
&\le\|W_1\|_{L^1}\|\FF(|u|^2)\|^2_{L^\infty}+\|W_2\FF(|u|^2)\|_{L^1}\|\FF(|u|^2)\|_{L^\infty}\\
&\lesssim\|W_1\|_{L^1}\|u\|^4_{L^2}+\|W_2\|_{L^{3,\infty}}  \|u\|_{\dot{H}^1}^2 \|u\|_{L^2}^2<\infty,
\end{align*}
where we used Lemma \ref{lm:convol-ineq} in the second inequality. Eq.~\eqref{eq:cE2_2} then implies that 
\begin{equation*}
\mathcal{E}(u,f_u)=\langle u , H_Vu\rangle-g^2\big\|\omega^{-\frac12}v\FF(|u|^2)\big\|_{L^2}^2
=\min_{f\in \mathcal{Z}_\omega}\mathcal{E}(u,f) \,,
\end{equation*}
and $f_u$ is the unique minimizer of $\mathcal{E}(u,f)$ at fixed $u$.
By our convention \eqref{eq:convfourier} on the Fourier transform, the energy~$\mathcal{E}(u,f_u)$ can be rewritten as the Hartree energy~\eqref{eq:Hartree_en}, which yields the result.
\end{proof}

We now establish Proposition \ref{prop:condfL2} which gives necessary conditions and sufficient conditions so that $f_{\mathrm{gs}}$ belongs to~$L^2(\mathbb{R}^3)$, where $(u_{\mathrm{gs}},f_{\mathrm{gs}})$ a minimizer of the Klein--Gordon--Schr\"odinger energy functional $\mathcal{E}(u,f)$.

\begin{proof}[Proof of Proposition \ref{prop:condfL2}]
By Proposition \ref{prop:KGS-Hartree}, any minimizer $(u_{\mathrm{gs}},f_{\mathrm{gs}})$ of  the Klein--Gordon--Schr\"odinger energy functional over $\mathcal{U}\times\mathcal{Z}_\omega$ satisfies the relation~\eqref{eq:f_u}:
\begin{equation}
\forall k \in \R^3 , \quad f_{\mathrm{gs}}(k)=-g\omega^{-1}(k)v(k) \mathcal{F}(|u_{\mathrm{gs}}|^2)(k).
\end{equation}
Since $u_{\mathrm{gs}}$ is in~$H^1(\R^3)$, we have, for all $k$ in~$\mathbb{R}^3$,
\begin{equation*}
\big|\mathcal{F}(|u_{\mathrm{gs}}|^2)(k)\big|\lesssim \min(1,|k|^{-1}).
\end{equation*}
Moreover, since $\mathcal{F}(|u_{\mathrm{gs}}|^2)$ is continuous at $k=0$ and $\mathcal{F}(|u_{\mathrm{gs}}|^2)(0)=1$, we have 
\begin{equation*}
\frac12\le\big|\mathcal{F}(|u_{\mathrm{gs}}|^2)(k)\big|\le\frac32,
\end{equation*}
in a neighborhood of $k=0$.
Hence we see that sufficient conditions ensuring that $f_{\mathrm{gs}}$ belongs to~$L^2(\R^3)$ are
\begin{equation*}
k\mapsto \frac{v(k)}{|k|\omega(k)} \mathds{1}_{|k|\ge1} \in L^2(\R^3) \quad\text{and}\quad k\mapsto \frac{v(k)}{\omega(k)} \mathds{1}_{|k|\le1} \in L^2(\R^3),
\end{equation*}
while a necessary condition is
\begin{equation*}
 k\mapsto \frac{v(k)}{\omega(k)} \mathds{1}_{|k|\le1} \in L^2(\R^3).
\end{equation*}
This establishes Proposition \ref{prop:condfL2}.
\end{proof}

In the remainder of this section, we study the Hartree energy functional \eqref{eq:Hartree_en}. By Proposition \ref{prop:KGS-Hartree}, the results that we prove for the Hartree energy directly imply the corresponding results for the Klein--Gordon--Schr\"odinger energy.

\subsection{Existence of a minimizer}\label{sec:exist-min-Hartree}

The next  proposition gives the existence of a minimizer for the Hartree energy \eqref{eq:Hartree_en}. It implies Theorem \ref{thm:mainlnc} from the introduction.

\begin{proposition} \label{prop:min_Hartree}
Suppose that $V$ satisfies Hypothesis \ref{condV} and $W$ satisfies Hypothesis \ref{condv0:2a}. There exists $C_V>0$ such that, if the decompositions of $V$ and $W$ of the form $V=V_1+V_2$, $W=W_1+W_2$ in Hypotheses \ref{condV} and \ref{condv0:2a}, respectively, can be chosen such that
\begin{equation}\label{eq:condv:0_Hartree}
\|W_1\|_{L^1}+C_V\|W_2\|_{L^{3,\infty}}\le \delta(\mu_{V_1}-\mu_V)
\end{equation}
and
\begin{equation}\label{eq:smallness_Hartree}
C\|W_2\|_{L^{3,\infty}}\le \frac12(1-a),
\end{equation}
then the Hartree energy functional \eqref{eq:Hartree_en} has a minimizer. Here $C$ and $\delta$ are universal constants and $a$ is given by Hypothesis \ref{condV}.
\end{proposition}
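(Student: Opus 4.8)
\textbf{Proof strategy for Proposition~\ref{prop:min_Hartree}.}
The plan is to apply the direct method of the calculus of variations to the Hartree functional $J$ on $\mathcal{U}$. First I would show that $J$ is bounded below and coercive on $\mathcal{U}$ for the $\mathcal{Q}_V$-norm. Write $V=V_1+V_2$, $W=W_1+W_2$ as in the hypotheses and estimate the convolution term by splitting it according to $W=W_1+W_2$: by Lemma~\ref{lm:convol-ineq}(i) the $W_1$ part is bounded by $\|W_1\|_{L^1}\|u\|_{L^2}^2\|u\|_{L^2}^2=\|W_1\|_{L^1}$, while by Lemma~\ref{lm:convol-ineq}(ii) the $W_2$ part is bounded by $C\|W_2\|_{L^{3,\infty}}\|u\|_{\dot H^1}^2$. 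Combining this with Lemma~\ref{lm:relative-bound}, which controls $\|u\|_{\dot H^1}^2$ by $(1-a)^{-1}(\langle u,H_Vu\rangle+b)$, the smallness condition \eqref{eq:smallness_Hartree} ensures that the $W_2$ term can be absorbed into a fraction of $\langle u,H_Vu\rangle$, giving $J(u)\ge \tfrac12(\langle u,H_Vu\rangle - C_V') - \|W_1\|_{L^1}$, hence lower-semiboundedness and coercivity of minimizing sequences in $H^1$ (and in fact in $\mathcal{Q}_V$, using $V_1\ge 0$). Fix a minimizing sequence $(u_j)$; by coercivity it is bounded in $\mathcal{Q}_V\hookrightarrow H^1$, so up to a subsequence $u_j\rightharpoonup u_\infty$ weakly in $H^1$ and, by Rellich on balls plus a tightness/diagonal argument, $u_j\to u_\infty$ strongly in $L^2_{\mathrm{loc}}$.

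The second and crucial step is to rule out loss of mass at infinity, i.e.\ to upgrade $L^2_{\mathrm{loc}}$ convergence to $\|u_\infty\|_{L^2}=1$ (strong $L^2$ convergence). This is where the condition \eqref{eq:condv:0_Hartree} and the spectral gap $\mu_V<\mu_{V_1}$ enter, following the Lions-type concentration-compactness philosophy. The idea is: if a positive fraction $\theta\in(0,1)$ of the mass escapes, then using the IMS localization $\eta_R^2+\tilde\eta_R^2=1$ one splits $u_j$ into a piece near the origin and a piece near infinity; the piece near infinity sees only $H_{V_1}$ effectively (since $V_2\to 0$ and the interaction term, being controlled by $\|W_2\|_{L^{3,\infty}}\|u\|_{\dot H^1}^2$ with a small constant plus an $L^1$ piece that is also small by \eqref{eq:condv:0_Hartree}, contributes at least $-[\|W_1\|_{L^1}+C_V\|W_2\|_{L^{3,\infty}}]$ which is $\ge -\delta(\mu_{V_1}-\mu_V)$), and therefore contributes at least $\approx (\mu_{V_1}-\delta(\mu_{V_1}-\mu_V))\theta$ to the energy, strictly more than $\mu_V\theta$ when $\delta$ is small enough. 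Meanwhile the piece near the origin contributes at least $\mu_V(1-\theta)$ minus the (negative) interaction, and the cross terms and localization errors are $o(R^0)$. Adding up and optimizing, one finds the total energy would be $>\inf_{\mathcal U}J$, a contradiction. Hence no mass is lost, $u_j\to u_\infty$ strongly in $L^2$, and $\|u_\infty\|_{L^2}=1$, so $u_\infty\in\mathcal{U}$.

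The final step is lower semicontinuity: I would show $J(u_\infty)\le\liminf_j J(u_j)$. The quadratic form part $\langle u,H_Vu\rangle$ is handled by Lemma~\ref{lem:LowerSemiContDeltaV} (which needs exactly the weak-$H^1$ plus strong-$L^2$ convergence we have). For the interaction term $\|W^{1/2}\FF(|u|^2)\|_{L^2}^2$, one needs continuity (not just semicontinuity); here one uses that $|u_j|^2\to|u_\infty|^2$ in $L^1$ and, via the uniform $H^1$ bound and interpolation, in $L^{3/2}$ as well (so $\FF(|u_j|^2)\to\FF(|u_\infty|^2)$ pointwise with a uniform $\min(1,|k|^{-1})$-type bound), and then a dominated-convergence argument in the $W=W_1+W_2$ splitting together with the estimates of Lemma~\ref{lm:convol-ineq} gives convergence of the interaction term. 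Combining, $J(u_\infty)\le\inf_{\mathcal U}J$, so $u_\infty$ is a minimizer. By Proposition~\ref{prop:KGS-Hartree}, $(u_\infty,f_{u_\infty})$ is then a minimizer of $\mathcal{E}$, proving Theorem~\ref{thm:mainlnc}.

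I expect the main obstacle to be the second step, the exclusion of dichotomy/vanishing: making the localization argument rigorous requires carefully tracking how the nonlocal, possibly ultraviolet-singular interaction term behaves under the IMS splitting (the convolution does not localize cleanly), and it is precisely there that the combination of the Lorentz-space estimates of Lemma~\ref{lm:convol-ineq}, the smallness \eqref{eq:smallness_Hartree}, and the gap condition \eqref{eq:condv:0_Hartree} must be balanced to beat $\mu_V$. A secondary subtlety is establishing strong $L^{3/2}$ (or at least good pointwise) convergence of $|u_j|^2$ for the continuity of the interaction term without assuming strong $H^1$ convergence.
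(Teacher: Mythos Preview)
Your outline is essentially the paper's own proof: direct method, coercivity from Lemma~\ref{lm:convol-ineq} plus Lemma~\ref{lm:relative-bound} under~\eqref{eq:smallness_Hartree}, IMS localization to exclude loss of mass, and lower semicontinuity via Lemma~\ref{lem:LowerSemiContDeltaV} together with continuity of the quartic term. Two points are worth sharpening.

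\textbf{The near-origin piece must be compared to $E_V$, not $\mu_V$.} In your dichotomy step you write that the localized piece near the origin contributes ``at least $\mu_V(1-\theta)$ minus the (negative) interaction''. If you carry this through, you obtain
\[
J_V(u_j)\ge \mu_V(1-\theta)+(\mu_{V_1}-\delta(\mu_{V_1}-\mu_V))\theta-[\text{int}_{\text{near}}]+o(R^0),
\]
and to derive a contradiction you need the right side to exceed $E_V$. But $[\text{int}_{\text{near}}]$ is not small in general (it can be comparable to $\mu_V-E_V$ when $\theta$ is small), so this inequality does not close. The paper avoids this by first proving the gap $E_V<E_{V_1}$ between the \emph{Hartree} infima (its Step~1, using~\eqref{eq:condv:0_Hartree}), and then in Step~2 normalizing each localized piece to unit $L^2$-norm and applying $J_V\ge E_V$, $J_{V_1}\ge E_{V_1}$ directly; the quartic scaling of the interaction under normalization produces exactly the controllable error terms. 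The resulting bound
\[
J_V(u)\ge E_V+\big(E_{V_1}-E_V-4\|W_1\|_{L^1}-C\|W_2\|_{L^{3,\infty}}\|u\|_{H^1}^2\big)\|\tilde\eta_R u\|_{L^2}^2+o(R^0)
\]
then forces $\|\tilde\eta_R u_j\|_{L^2}^2\to 0$ because the parenthesis is uniformly positive by Step~1 and the uniform $H^1$ bound.

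\textbf{Continuity of the quartic term.} Your dominated-convergence route works, but the paper's argument is more direct and avoids the $L^{3/2}$ issue you flag: using the symmetry of the convolution and Lemma~\ref{lm:convol-ineq}(i)--(ii), one gets
\[
\Big|\int(w*|u_j|^2)|u_j|^2-\int(w*|u_\infty|^2)|u_\infty|^2\Big|\lesssim \big(\|W_1\|_{L^1}+\|W_2\|_{L^{3,\infty}}\big)\big(\|u_j\|_{H^1}^2+\|u_\infty\|_{H^1}^2\big)\,\big\||u_j|^2-|u_\infty|^2\big\|_{L^1},
\]
which goes to $0$ by strong $L^2$ convergence alone; no strong $H^1$ or $L^{3/2}$ convergence is needed.
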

As mentioned in the introduction, the existence of a minimizer for the Hartree energy has been proven under various conditions in different contexts, but we are not aware of a reference giving the result under our assumptions. We  detail the proof of Proposition \ref{prop:min_Hartree} in Appendix \ref{app:Hartree}.

\begin{remark}
 Writing the Hartree energy in its usual form
\begin{equation}
J(u)=\ps{u}{H_V\,u}-\int_{\R^3}(w*\abs{u}^2)(x)\, \abs{u(x)}^2\diff x,
\end{equation}
we have in our context $w=\bar\FF(W)$ (in the sense of distributions) and it is thus natural to make an assumption on the Fourier transform of the convolution potential $w$. In other contexts, however, it might be more natural to make hypotheses on $w$ rather than on its Fourier transform. It is  straightforward to verify that our proof adapts to the conditions
\begin{align*}
&w=w_1+w_2\in L^\infty(\mathbb{R}^3) + L^{3/2,\infty}(\mathbb{R}^3), \\
& \|w_1\|_{L^\infty}+C_V\|w_2\|_{L^{3/2,\infty}}\le \delta(\mu_{V_1}-\mu_V), \quad C\|w_2\|_{L^{3/2,\infty}}\le \frac12(1-a).
\end{align*}
It suffices to use, instead of \eqref{eq:convol-ineq1}--\eqref{eq:convol-ineq2}, the inequalities 
\begin{align*}
&\big\| (w*|u_1|^2)|u_2|^2\big \|_{L^1}\lesssim \|w\|_{L^\infty}\|u_1\|_{L^2}^2\|u_2\|_{L^2}^2\,,\\
&\big\| (w*|u_1|^2)|u_2|^2\big \|_{L^1}\lesssim \|w\|_{L^{3/2,\infty}}\|u_1\|_{\dot{H}^1}^2\|u_2\|_{L^2}^2\,,
\end{align*}
that can be proven using the weak H\"older and Young inequalities \eqref{eq:weak_Holder}--\eqref{eq:weak_Young2}.
\end{remark}
\begin{remark}
 As in Remark \ref{rk:intro_Nelson2}, in the case of a confining external potential $V$, one can always find a decomposition $V=V_1+V_2$ such that \eqref{eq:condv:0_Hartree} 
 holds, by Lemma \ref{lm:confining_intro}. Hence a minimizer exists in this case without any restriction on the size of $\|W_1\|_{L^1}$. 
\end{remark}

\subsection{Uniqueness of the minimizer}\label{sec:uniq-min-Hartree}

Now that we have the existence of a minimizer for the Hartree energy functional~$J$, we prove the uniqueness of the minimizer. The next proposition implies Theorem \ref{thm:mainlnc2} from the introduction.
\begin{proposition}\label{prop:uniqueness Nelson}
Suppose that $V$ satisfies Hypotheses \ref{condV} and \ref{condGS} and that $W$ satisfies Hypothesis \ref{condv0:2a}. There exists $\varepsilon_V>0$ such that, if
\begin{equation*}
\|W\|_{L^1+L^{3,\infty}}\le\varepsilon_V,
\end{equation*}
then $J$ has a unique minimizer $u_{\mathrm{gs}}$ in~$\mathcal{U}$ such that $\langle u_{\mathrm{gs}},u_V\rangle_{L^2}>0$.
\end{proposition}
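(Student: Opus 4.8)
The plan is to work entirely at the level of the Hartree functional $J$, using Proposition \ref{prop:KGS-Hartree} to transfer the result to $\mathcal{E}$. The existence of a minimizer is already provided by Proposition \ref{prop:min_Hartree}, so I only need uniqueness (up to phase) under a smallness condition on $\|W\|_{L^1+L^{3,\infty}}$, normalizing the phase by requiring $\langle u_{\mathrm{gs}},u_V\rangle_{L^2}>0$. First I would record that any minimizer $u$ satisfies the Euler--Lagrange (nonlinear eigenvalue) equation $H_V u - 2\big(\bar{\FF}(W)*|u|^2\big)u = \lambda u$ for some Lagrange multiplier $\lambda\in\mathbb{R}$, with $\lambda = J(u) - \big\|W^{1/2}\FF(|u|^2)\big\|_{L^2}^2 + o(1)$ as $\|W\|\to 0$; in particular the minimal energy and $\lambda$ both converge to $\mu_V$ as $\|W\|_{L^1+L^{3,\infty}}\to 0$, using the estimates of Lemma \ref{lm:convol-ineq} to control the nonlinear term by $\|W\|_{L^1+L^{3,\infty}}\|u\|_{\dot H^1}^2$ (and $\|u\|_{\dot H^1}$ is bounded uniformly by Lemma \ref{lm:relative-bound}).

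The core of the argument is the projection onto $u_V$ and its orthogonal complement. Write $u = \alpha u_V + w$ with $\alpha = \langle u_V,u\rangle > 0$ (so that $\alpha$ is real and close to $1$) and $w = \Pi_V^\perp u \perp u_V$. Projecting the Euler--Lagrange equation with $\Pi_V^\perp$ gives
\begin{equation*}
\big(H_V - \lambda\big)w = 2\,\Pi_V^\perp\big[\big(\bar{\FF}(W)*|u|^2\big)u\big].
\end{equation*}
Since $\lambda\to\mu_V$ and $\mu_V$ is an isolated simple eigenvalue (Hypothesis \ref{condGS}), the operator $\Pi_V^\perp(H_V-\lambda)\Pi_V^\perp$ is invertible on $\mathrm{Ran}\,\Pi_V^\perp$ with norm bounded uniformly (for $\|W\|$ small) in terms of the spectral gap $\delta_V$. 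Combining this with the bound $\big\|\big(\bar{\FF}(W)*|u|^2\big)u\big\|$ in a suitable dual norm $\lesssim \|W\|_{L^1+L^{3,\infty}}\|u\|_{\dot H^1}^2\|\cdot\|$, I get $\|w\|_{\mathcal{Q}_V} \lesssim \|W\|_{L^1+L^{3,\infty}}$; that is, every minimizer is $O(\|W\|)$-close to $u_V$ in the form-domain norm. The point of this step is that it localizes all minimizers in a small ball around $u_V$, where the functional should be strictly convex in the relevant directions.

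Finally, to get uniqueness I would take two minimizers $u,\tilde u$ (both phase-normalized), each of the form $\alpha u_V + w$, $\tilde\alpha u_V + \tilde w$ with $w,\tilde w = O(\|W\|)$, subtract their Euler--Lagrange equations, and estimate $u - \tilde u$. Setting $d = u - \tilde u$, the difference equation reads $(H_V - \lambda)d = (\lambda - \tilde\lambda)\tilde u + 2\big[(\bar{\FF}(W)*|u|^2)u - (\bar{\FF}(W)*|\tilde u|^2)\tilde u\big]$; the nonlinear difference is trilinear in $u,\tilde u,d$ and hence bounded by $C\|W\|_{L^1+L^{3,\infty}}\|d\|_{\mathcal{Q}_V}$ times $O(1)$ factors, via Lemma \ref{lm:convol-ineq}. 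The component of $d$ along $u_V$ is controlled because $\|u\|_{L^2}=\|\tilde u\|_{L^2}=1$ and both overlaps with $u_V$ are positive and close to $1$, forcing $|\alpha - \tilde\alpha| \lesssim \|w-\tilde w\|_{\mathcal{Q}_V} = \|\Pi_V^\perp d\|_{\mathcal{Q}_V}$; and the multiplier difference satisfies $|\lambda - \tilde\lambda|\lesssim \|W\|\,\|d\|$. Projecting the difference equation with $\Pi_V^\perp$ and inverting $(H_V-\lambda)$ on $\mathrm{Ran}\,\Pi_V^\perp$ as above then yields $\|\Pi_V^\perp d\|_{\mathcal{Q}_V}\le C\|W\|_{L^1+L^{3,\infty}}\|d\|_{\mathcal{Q}_V}$, hence $\|d\|_{\mathcal{Q}_V}\le C\|W\|_{L^1+L^{3,\infty}}\|d\|_{\mathcal{Q}_V}$, so $d=0$ as soon as $\varepsilon_V$ is small enough. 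The main obstacle I anticipate is bookkeeping the Lagrange multipliers carefully — showing $\lambda,\tilde\lambda$ are genuinely $O(\|W\|)$-close to each other and to $\mu_V$ with constants independent of the particular minimizer — and making sure the invertibility of $\Pi_V^\perp(H_V-\lambda)\Pi_V^\perp$ is quantified uniformly; once that is in place, the trilinear Lorentz-space estimates from Lemma \ref{lm:convol-ineq} close the contraction cleanly.
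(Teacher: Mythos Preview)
Your plan is correct and essentially identical to the paper's proof: the paper also derives the Euler--Lagrange equation, projects onto $u_V$ and its orthogonal complement to write $\varphi=\Pi_V^\perp u = 2R_{\lambda_V}\Pi_V^\perp\big[(\bar{\FF}(W)*|u|^2)u\big]$ (this is their Lemma~\ref{lm:phi}), then subtracts the equations for two minimizers and closes the same contraction (their Lemma~\ref{lm:phi_stab}), controlling $|\alpha_1-\alpha_2|$ by $\|\varphi_1-\varphi_2\|_{L^2}$ via the normalization constraint exactly as you outline. The bookkeeping you flag as the main obstacle---the uniform bound $\lambda_V\le\mu_V+\tfrac12\delta_V$ ensuring invertibility of $\Pi_V^\perp(H_V-\lambda_V)\Pi_V^\perp$---is precisely what the paper handles in Lemma~\ref{lm:phi} via \eqref{eq:EV<muV}--\eqref{eq:lambda<}.
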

To prove Proposition \ref{prop:uniqueness Nelson} we use the following decomposition: $$L^2(\R^3)=\C u_V\oplus (\C u_V)^\perp,$$ where $u_V$ is the ground state of $H_V$ as in Hypothesis \ref{condGS}, and we write $u=\alpha u_V+\varphi,$ with the normalization condition~$\abs{\alpha}^2+\norm{\varphi}_{L^2}^2=1.$ The Hartree ground state energy is denoted by
\begin{equation*}
E_V:=\inf_{u\in\mathcal{U}}J(u).
\end{equation*}
We display the dependence on $V$ since the existence of a gap, $E_V<E_{V_1}$, is an important step in our proof of the existence of a minimizer in Appendix \ref{app:Hartree}.

Our proof of Proposition \ref{prop:uniqueness Nelson} relies on the following two lemmata. For $\lambda\in\mathbb{R}$, we set the resolvent~$R_\lambda:=(H_{V}-\lambda)^{-1}$ (defined, a priori, as an unbounded operator on $\mathrm{Ran}(\mathbf{1}_{\{\lambda\}}(H_V))^\perp$). We also recall that $\Pi_V^\perp:=\mathbf{I}-\ket{u_V}\bra{u_V}.$
\begin{lemma}\label{lm:phi}
Suppose that $V$ satisfies Hypotheses \ref{condV} and \ref{condGS} and that $W$ satisfies Hypothesis~\ref{condv0:2a}. There exists $\varepsilon_V>0$ such that, if
\begin{equation*}
\|W\|_{L^1+L^{3,\infty}}\le\varepsilon_V,
\end{equation*} 
then for all global minimizer $u$ in $\mathcal{U}$ of $J$ such that $u = \alpha u_V + \varphi$, with~$\alpha$ in~$\mathbb{C}$, $\varphi$ in~$\mathcal{Q}_V$, and~$u_V\perp \varphi$ in~$L^2$, we have
\begin{align}\label{eq:minimizer_phi}
\varphi=2R_{\lambda_V}\Pi_V^\perp\,(\bar{\FF}(W)*\abs{u}^2)u,\quad \lambda_V:=E_V-\big\langle u,(\bar{\FF}(W)*|u|^2)u\big\rangle.
\end{align}
\end{lemma}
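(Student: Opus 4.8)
The plan is to derive the Euler--Lagrange equation for the Hartree functional $J$ and then project it onto the two pieces of the decomposition $L^2(\R^3)=\C u_V\oplus(\C u_V)^\perp$. First I would compute the Gateaux derivative of $J$ at a minimizer $u$ under the constraint $\|u\|_{L^2}=1$: writing $W(u):=\bar\FF(W)*|u|^2$, one gets the non-linear eigenvalue equation
\begin{equation}\label{eq:EL}
H_V u - 2\,W(u)\,u = \lambda_V\, u,
\end{equation}
where $\lambda_V$ is the Lagrange multiplier. Pairing \eqref{eq:EL} with $u$ and using $\langle u,H_Vu\rangle - \langle u, W(u)u\rangle = J(u) = E_V$ (for a minimizer) together with $\|u\|_{L^2}=1$ gives $\lambda_V = E_V - \langle u,W(u)u\rangle$, which is exactly the value claimed. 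The differentiability of the quartic term $u\mapsto\langle u,W(u)u\rangle$ and the fact that $W(u)u\in\mathcal{Q}_V^*$ (so that \eqref{eq:EL} makes sense in the dual of the form domain) should be justified from Lemma~\ref{lm:convol-ineq}; this is where the smallness of $\|W\|_{L^1+L^{3,\infty}}$, or at least Hypothesis~\ref{condv0:2a} together with Hypothesis~\ref{condV}, is needed to keep all terms in the right spaces and to ensure $\lambda_V$ lies below the bottom of the essential spectrum of $H_V$, so that $R_{\lambda_V}\Pi_V^\perp$ is a bounded operator.

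Next I would apply $\Pi_V^\perp$ to \eqref{eq:EL}. Since $\Pi_V^\perp u_V = 0$ and $\Pi_V^\perp u = \varphi$, and since $\Pi_V^\perp$ commutes with $H_V$ (as $u_V$ is an eigenvector of $H_V$), we obtain
\begin{equation}\label{eq:proj}
(H_V-\lambda_V)\varphi = 2\,\Pi_V^\perp\big(W(u)\,u\big).
\end{equation}
The operator $H_V - \lambda_V$ restricted to $\mathrm{Ran}(\Pi_V^\perp)$ is invertible provided $\lambda_V < \mu_{V} + \delta_V$ in the appropriate sense — more precisely, provided $\lambda_V$ stays strictly below $\inf\big(\sigma(H_V)\setminus\{\mu_V\}\big)$. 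For $\|W\|_{L^1+L^{3,\infty}}$ small one has $E_V$ close to $\mu_V$ and $\langle u, W(u)u\rangle$ small, hence $\lambda_V$ close to $\mu_V$, so this spectral condition holds; here one invokes Hypothesis~\ref{condGS} to know $\mu_V$ is a simple isolated eigenvalue. Then inverting \eqref{eq:proj} on $\mathrm{Ran}(\Pi_V^\perp)$ yields
\begin{equation*}
\varphi = 2\,R_{\lambda_V}\,\Pi_V^\perp\big(W(u)\,u\big) = 2\,R_{\lambda_V}\,\Pi_V^\perp\big((\bar\FF(W)*|u|^2)\,u\big),
\end{equation*}
which is \eqref{eq:minimizer_phi}.

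The main obstacle I anticipate is making sense of the Euler--Lagrange equation \eqref{eq:EL} and the resolvent identity at the level of regularity available: $u$ lies only in $\mathcal{Q}_V$ (form domain), not necessarily in the operator domain $\mathcal{D}(H_V)$, and $W(u)u$ is a priori only in a negative-order space. One must argue that $R_{\lambda_V}\Pi_V^\perp$ maps $W(u)u$ back into $\mathcal{Q}_V$ and that the equation holds as an identity in $\mathcal{Q}_V$ (or its dual), using the estimates of Lemma~\ref{lm:convol-ineq}(i)--(iii) — in particular \eqref{eq:convol-ineq1} and \eqref{eq:convol-ineq2} to control $W(u)$ in $L^\infty$ up to a term handled via $\dot H^1$, and Lemma~\ref{lm:relative-bound} to go back and forth between $\dot H^1$ and $\mathcal{Q}_V$ norms. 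A secondary technical point is verifying that the constraint manifold $\mathcal{U}$ is smooth enough that the Lagrange multiplier rule applies; this is standard since $u\mapsto\|u\|_{L^2}^2$ has nonvanishing derivative $2u\neq0$ on $\mathcal{U}$. Once \eqref{eq:minimizer_phi} is established, it will serve as the fixed-point-type relation from which the companion lemma and then Proposition~\ref{prop:uniqueness Nelson} extract smallness of $\varphi$ and uniqueness.
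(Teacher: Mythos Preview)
Your proposal is correct and follows essentially the same approach as the paper: derive the Euler--Lagrange equation \eqref{eq:EL} in $\mathcal{Q}_V^*$, identify $\lambda_V$ by pairing with $u$, project with $\Pi_V^\perp$, and invert $(H_V-\lambda_V)\Pi_V^\perp$ once $\lambda_V$ is shown to lie below $\mu_V+\delta_V$. One small simplification the paper uses that you might adopt: since $\langle u,(\bar\FF(W)*|u|^2)u\rangle=\|W^{1/2}\FF(|u|^2)\|_{L^2}^2\ge0$, one has $\lambda_V\le E_V$ directly, so only $E_V\le\mu_V+\tfrac12\delta_V$ needs the smallness of $W$.
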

\begin{proof}
We first recall that if $u$ is a minimizer of $J$, then $u$ is a non-linear Hartree eigenstate, 
\begin{equation}\label{eq:Hartree_eigenstate}
\big(H_V - 2\bar{\FF}(W)*|u|^2\big)u=\lambda_V u\quad\text{in}\; \mathcal{Q}_V^*,
\end{equation}
where $\mathcal{Q}_V^*$ is the topological dual of $\mathcal{Q}_V$ and $\lambda_V$ is defined by \eqref{eq:minimizer_phi}.
 Here $H_V$ should be understood as an operator in $\LL(\mathcal{Q}_V,\mathcal{Q}_V^*).$ Eq.~\eqref{eq:Hartree_eigenstate} can be proven by using that, for all~$t$ in~$\mathbb{C}$ and all $u^\perp\in\mathcal{U}$ such that $u\perp u^\perp$ in $L^2$, we have
 \begin{align*}
 J\big((1+|t|^2)^{-\frac12}(u+tu^\perp)\big)\ge E_V.
 \end{align*}
Computing the term of order $1$ in~$t$ in the asymptotic expansion of the previous expression as~$t\to0$ shows that
\begin{equation*}
\big\langle u^\perp,\big(H_V - 2\bar{\FF}(W)*|u|^2\big)u\big\rangle=0,
\end{equation*}
 for all $u^\perp\in\mathcal{U}$ such that $u\perp u^\perp$ in $L^2$. Hence \eqref{eq:Hartree_eigenstate} holds for some $\lambda_V\in\mathbb{C}$. Since $J(u)=E_V$, we obtain that $\lambda_V$ is given by \eqref{eq:minimizer_phi}.

Now, applying $\Pi_V^\perp$ to \eqref{eq:Hartree_eigenstate} gives
\begin{equation}\label{eq:piperpphi}
(H_V - \lambda_V)\Pi_V^\perp u= 2\Pi_V^\perp\big(\bar{\FF}(W)*|u|^2\big)u.
\end{equation}
Let $\delta_V:=\mathrm{dist}(\mu_V,\sigma(H_V)\setminus\{\mu_V\})$ be the distance from $\mu_V$ to the rest of the spectrum of~$H_V$. Recall that Hypothesis~\ref{condGS} implies that~$\delta_V>0$. Using perturbative arguments, it is not difficult to verify that 
\begin{equation}\label{eq:EV<muV}
E_V \le \mu_V + \frac12\delta_V,
\end{equation}
provided that $\|W\|_{L^1+L^{3,\infty}}\le\varepsilon_V$, for some $\varepsilon_V>0$ small enough (see \eqref{eq:r1} in Appendix \ref{app:Hartree} for a precise justification). Moreover, since
\begin{equation*}
\big\langle u,(\bar{\FF}(W)*|u|^2)u\big\rangle=\big\|W^{\frac12}\FF(|u|^2)\big\|_{L^2}^2\ge0\,,
\end{equation*}
we also have $\lambda_V \leq E_V$, and hence
\begin{equation}\label{eq:lambda<}
\lambda_V \le \mu_V + \frac12\delta_V \,.
\end{equation}
 Therefore the operator 
 \begin{equation*}
 (H_V - \lambda_V)\Pi_V^\perp\in\LL(\mathcal{Q}_V,\mathcal{Q}_V^*)
 \end{equation*}
 is bounded invertible with inverse $R_{\lambda_V}\Pi_V^\perp\in\LL(\mathcal{Q}_V^*,\mathcal{Q}_V).$ Since $\varphi=\Pi_V^\perp u$, \eqref{eq:piperpphi} then implies~\eqref{eq:minimizer_phi}.
\end{proof}
In the following lemma, given $u_1$, $u_2$ two minimizers of $J$, we decompose as before, for $j=1,2$, $u_j = \alpha_j u_V + \varphi_j$, with~$\alpha_j=\langle u_j,u_V\rangle>0$, $\varphi_j\in\mathcal{Q}_V$ and $u_V\perp \varphi_j$ in~$L^2$.
\begin{lemma}\label{lm:phi_stab}
Suppose that $V$ satisfies Hypotheses~\ref{condV} and \ref{condGS} and that $W$ satisfies Hypothesis~\ref{condv0:2a}. There exists $\varepsilon_V>0$ such that, if
\begin{equation*}
\|W\|_{L^1+L^{3,\infty}}\le\varepsilon_V,
\end{equation*} 
then for all global minimizers $u_1$, $u_2$ in $\mathcal{U}$ of $J$, we have
\begin{align}
\|\varphi_1-\varphi_2\|_{\mathcal{Q}_V} \le C_V\|W\|_{L^1+L^{3,\infty}}  \big(\|u_1\|_{{H}^1}^2+\|u_2\|_{{H}^1}^2\big)^2 \, \| u_1 - u_2\|_{L^2}. \label{eq:estim_phi1-phi2}
\end{align}
\end{lemma}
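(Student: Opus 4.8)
The plan is to derive the estimate \eqref{eq:estim_phi1-phi2} directly from the representation formula \eqref{eq:minimizer_phi} of Lemma \ref{lm:phi}, writing $\varphi_j = 2R_{\lambda_{V,j}}\Pi_V^\perp(\bar{\FF}(W)*|u_j|^2)u_j$ where $\lambda_{V,j}=E_V-\langle u_j,(\bar{\FF}(W)*|u_j|^2)u_j\rangle$ (note the ground state energy $E_V$ is the same for both minimizers), and then estimating the difference $\varphi_1-\varphi_2$ by telescoping. The main structural point is that, for $\|W\|_{L^1+L^{3,\infty}}\le\varepsilon_V$ small enough, the bounds \eqref{eq:EV<muV} and \eqref{eq:lambda<} give $\lambda_{V,j}\le\mu_V+\tfrac12\delta_V$, so that $(H_V-\lambda_{V,j})\Pi_V^\perp$ is boundedly invertible in $\LL(\mathcal{Q}_V,\mathcal{Q}_V^*)$ with operator norm of the inverse $R_{\lambda_{V,j}}\Pi_V^\perp\in\LL(\mathcal{Q}_V^*,\mathcal{Q}_V)$ controlled by a constant $C_V$ depending only on $V$ (uniformly in $j$, since $\lambda_{V,j}$ stays in a fixed compact interval bounded away from the spectrum of $H_V$ restricted to $\mathrm{Ran}\,\Pi_V^\perp$).

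Next I would split $\varphi_1-\varphi_2$ into three contributions, inserting and subtracting terms: (a) the difference of the resolvents, $2(R_{\lambda_{V,1}}-R_{\lambda_{V,2}})\Pi_V^\perp(\bar{\FF}(W)*|u_1|^2)u_1$, handled via the second resolvent identity $R_{\lambda_{V,1}}-R_{\lambda_{V,2}}=(\lambda_{V,1}-\lambda_{V,2})R_{\lambda_{V,1}}R_{\lambda_{V,2}}$ together with a bound on $|\lambda_{V,1}-\lambda_{V,2}|$; (b) the difference $2R_{\lambda_{V,2}}\Pi_V^\perp\big((\bar{\FF}(W)*|u_1|^2)-(\bar{\FF}(W)*|u_2|^2)\big)u_1$; and (c) the difference $2R_{\lambda_{V,2}}\Pi_V^\perp(\bar{\FF}(W)*|u_2|^2)(u_1-u_2)$. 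For (b) and (c) one writes $|u_1|^2-|u_2|^2=\bar u_1(u_1-u_2)+(\bar u_1-\bar u_2)u_2$, so every term carries an explicit factor $u_1-u_2$; the relevant nonlinear quantities are estimated in $\mathcal{Q}_V^*$ (equivalently, paired against an arbitrary $w\in\mathcal{Q}_V$) using Lemma \ref{lm:convol-ineq}, in particular \eqref{eq:convol-ineq1} and \eqref{eq:convol-ineq2_0}, which produce the factor $\|W\|_{L^1+L^{3,\infty}}$ and powers of the $H^1$-norms of $u_1,u_2$, together with $\|u_1-u_2\|_{L^2}$ (recalling $\dot H^1$-norms and $L^2$-norms are all dominated by the $H^1$-norm). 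For (a), the bound $|\lambda_{V,1}-\lambda_{V,2}|\lesssim \|W\|_{L^1+L^{3,\infty}}(\|u_1\|_{H^1}^2+\|u_2\|_{H^1}^2)\|u_1-u_2\|_{L^2}$ follows from the same Lemma \ref{lm:convol-ineq} estimates applied to the difference of the quadratic forms $\langle u_j,(\bar{\FF}(W)*|u_j|^2)u_j\rangle$, again after a telescoping expansion in $u_1-u_2$.

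Assembling the three pieces, each is bounded by $C_V\|W\|_{L^1+L^{3,\infty}}$ times a polynomial in $\|u_1\|_{H^1},\|u_2\|_{H^1}$ times $\|u_1-u_2\|_{L^2}$; collecting the polynomial into $(\|u_1\|_{H^1}^2+\|u_2\|_{H^1}^2)^2$ (which dominates each monomial that appears, after using $\|u_j\|_{L^2}=1$) yields exactly \eqref{eq:estim_phi1-phi2}. I expect the main obstacle to be bookkeeping: making sure that in each of the terms (a), (b), (c) the correct dual-pairing formulation with Lemma \ref{lm:convol-ineq} is used so that the critical $L^{3,\infty}$ part of $W$ is absorbed without loss, and verifying that the power of $H^1$-norms that genuinely arises is no larger than the fourth power claimed; the uniformity of the resolvent bound $\|R_{\lambda_{V,j}}\Pi_V^\perp\|_{\LL(\mathcal{Q}_V^*,\mathcal{Q}_V)}\le C_V$ over the admissible $\lambda_{V,j}$ is the other point requiring care, but it follows cleanly from the spectral gap $\delta_V>0$ once $\varepsilon_V$ is fixed small enough that $\lambda_{V,j}\le\mu_V+\tfrac12\delta_V$.
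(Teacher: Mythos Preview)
Your proposal is correct and follows essentially the same approach as the paper: the paper writes $\varphi_1-\varphi_2$ as a sum $S_1+S_2$ where $S_1$ is your term (a) (handled via the resolvent identity and the bound on $|\lambda_{V,1}-\lambda_{V,2}|$) and $S_2$ combines your terms (b) and (c), which are then telescoped exactly as you describe and estimated in $L^2\subset\mathcal{Q}_V^*$ using Lemma~\ref{lm:convol-ineq}. The uniform resolvent bound $\|R_{\lambda_{V,j}}\Pi_V^\perp\|_{\LL(\mathcal{Q}_V^*,\mathcal{Q}_V)}\le 4\delta_V^{-1}$ is used just as you anticipate.
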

\begin{proof}
Let $\lambda_1$, $\lambda_2$ be defined as in Lemma \ref{lm:phi}, namely $\lambda_j=E_V-\langle u_j,(\bar{\FF}(W)*|u_j|^2)u_j\rangle$. By Lemma \ref{lm:phi} and the triangle inequality, we have
\begin{align*}
\|\varphi_1-\varphi_2\|_{\mathcal{Q}_V}&\le S_1+S_2,
\end{align*}
where
\begin{align*}
&S_1:=2\big\|\big(R_{\lambda_1}\Pi_V^\perp-R_{\lambda_2}\Pi_V^\perp\big)(\bar{\FF}(W)*\abs{u_1}^2)u_1\big\|_{\mathcal{Q}_V} ,\\
&S_2:=2\big\|R_{\lambda_2}\Pi_V^\perp\big((\bar{\FF}(W)*\abs{u_1}^2)u_1-(\bar{\FF}(W)*\abs{u_2}^2)u_2\big)\big\|_{\mathcal{Q}_V}.
\end{align*}

We first estimate the second term. From \eqref{eq:lambda<} we deduce that $R_{\lambda_2}\Pi_V^\perp\in\LL(\mathcal{Q}_V^*,\mathcal{Q}_V)$ and
\begin{equation}\label{eq:estim_R2}
\big\|R_{\lambda_2}\Pi_V^\perp\big\|_{\LL(\mathcal{Q}_V^*,\mathcal{Q}_V)}\le4\delta_V^{-1}.
\end{equation}
Hence we can estimate
\begin{align*}
S_2
&\le 8\delta_V^{-1}\norm{(\bar{\FF}(W)*\abs{u_1}^2)u_1-(\bar{\FF}(W)*\abs{u_2}^2)u_2}_{\mathcal{Q}_V^*}\\
&\le 8\delta_V^{-1}\norm{(\bar{\FF}(W)*\abs{u_1}^2)u_1-(\bar{\FF}(W)*\abs{u_2}^2)u_2}_{L^2},
\end{align*}
since $L^2\subset \mathcal{Q}_V^*$. We obtain from the triangle inequality that
\begin{align*}
S_2&\le 8\delta_V^{-1}\norm{\big(\bar{\FF}(W)*[(\bar u_1 -\bar u_2 ) u_1]\big )u_1}_{L^2}\\
&\quad+ 8\delta_V^{-1}\norm{\big(\bar{\FF}(W)*[\bar u_2( u_1 - u_2 )]\big )u_1}_{L^2}\\
&\quad+ 8\delta_V^{-1}\norm{\big(\bar{\FF}(W)*|u_2|^2\big )(u_1-u_2)}_{L^2} ,
\end{align*}
and hence Lemma \ref{lm:convol-ineq} yields
\begin{align}\label{eq:estim_S2_a}
S_2&\le C_V\|W\|_{L^1+L^{3,\infty}} \big(\|u_1\|_{{H}^1}^2+\|u_2\|_{{H}^1}^2\big) \| u_1 - u_2\|_{L^2}.
\end{align}

Now we estimate $S_1$. It follows from the resolvent equation that
\begin{equation*}
S_1\le2|\lambda_1-\lambda_2|\big\|\big(R_{\lambda_1}\Pi_V^\perp R_{\lambda_2}\Pi_V^\perp\big)(\bar{\FF}(W)*\abs{u_1}^2)u_1\big\|_{\mathcal{Q}_V}.
\end{equation*}
Using \eqref{eq:estim_R2} twice, first for $R_{\lambda_2}$ and next for $R_{\lambda_1}$ (using also that $\mathcal{Q}_V\subset \mathcal{Q}_V^*$ in the latter case), and then appying Lemma \ref{lm:convol-ineq}, we obtain
\begin{equation}\label{eq:estim_S1a_b}
S_1\le C_V\|W\|_{L^1+L^{3,\infty}} \|u_1\|_{{H}^1}^2 |\lambda_1-\lambda_2| .
\end{equation}
Since $\lambda_j=E_V-\langle u_j,(\FF(W)*|u_j|^2)u_j\rangle$ and $\|u_j\|_{L^2}=1$, we can estimate, by the triangle inequality,
\begin{align*}
|\lambda_1-\lambda_2|\le \|u_1-u_2\|_{L^2}\big\|(\bar{\FF}(W)*|u_1|^2)u_1\big\|_{L^2}+\norm{(\bar{\FF}(W)*\abs{u_1}^2)u_1-(\bar{\FF}(W)*\abs{u_2}^2)u_2}_{L^2}.
\end{align*}
Using Lemma \ref{lm:convol-ineq} to bound the first term, and the same argument we used to prove \eqref{eq:estim_S2_a} for the second one, we obtain
\begin{align}\label{eq:estim_S1b}
|\lambda_1-\lambda_2|\le C_V\|W\|_{L^1+L^{3,\infty}} \big(\|u_1\|_{{H}^1}^2+\|u_2\|_{{H}^1}^2\big) \| u_1 - u_2\|_{L^2}.
\end{align}
Putting together \eqref{eq:estim_S1a_b} and \eqref{eq:estim_S1b} yields
\begin{equation}\label{eq:estim_S1}
S_1\le C_V\|W\|_{L^1+L^{3,\infty}}^2 \|u_1\|_{{H}^1}^2 \big(\|u_1\|_{{H}^1}^2+\|u_2\|_{{H}^1}^2\big) \| u_1 - u_2\|_{L^2}.
\end{equation}

Using $\|W\|_{L^1+L^{3,\infty}}\le\varepsilon_V$ and combining \eqref{eq:estim_S1} and \eqref{eq:estim_S2_a}, we obtain \eqref{eq:estim_phi1-phi2}. Since the role of $\varphi_1$ and $\varphi_2$ in the left-hand-side of \eqref{eq:estim_phi1-phi2} is symmetric, this concludes the proof.
\end{proof}

We are now ready to prove the uniqueness of a minimizer for the Hartree energy \eqref{eq:Hartree_en}.

\begin{proof}[Proof of Proposition \ref{prop:uniqueness Nelson}]
By Proposition \ref{prop:min_Hartree}, we know that the Hartree functional energy $J$ has a minimizer for $\|W\|_{L^1+L^{3,\infty}}\le\varepsilon_V$ with $\varepsilon_V$ small enough. Let $u_1$, $u_2$ be two minimizers of~$J$. We use the same notations as in the previous proof, decomposing $u_j=\alpha_ju_V+\varphi_j$. Since $\alpha_j=\langle u_j,u_V\rangle>0$, in order to verify that $u_1=u_2$, it suffices to prove that $\varphi_1=\varphi_2$. 

To this end, we first show that $\|u_j\|_{\dot{H}^1}^2$ is bounded by $C_V$ and next apply Lemma \ref{lm:phi_stab}. We can write
\begin{align*}
\|u_j\|_{\dot{H}^1}^2 \le \langle u_j,H_{V_+}u_j\rangle = J(u_j) + \langle u_j , V_- u_j \rangle - \big\langle u_j,(\bar{\FF}(W)*|u_j|^2)u_j\big\rangle.
\end{align*}
We have $J(u_j)=E_V$ since $u_j$ is a minimizer, $\langle u_j,V_-u_j\rangle\le a\|u_j\|_{\dot{H}^1}^2+b$ with $a<1$ by Hypothesis \ref{condV} and $|\langle u_j,(\bar{\FF}(W)*|u_j|^2)u_j\rangle|\le C\|W\|_{L^1+L^{3,\infty}}\|u_j\|_{{H}^1}^2$ for some universal constant $C$ by Lemma \ref{lm:convol-ineq}. Therefore
\begin{align*}
\|u_j\|_{{H}^1}^2 \le 1+ \frac{1}{1-a-C\|W\|_{L^1+L^{3,\infty}}} \big ( E_V + b + C\|W\|_{L^1+L^{3,\infty}}  \big).
\end{align*}
Hence, since in addition, by \eqref{eq:EV<muV}, $E_V \le \mu_V + \frac12\delta_V$ for $\|W\|_{L^1+L^{3,\infty}}\le\varepsilon_V$ with $\varepsilon_V$ small enough, we deduce that $\|u_j\|_{{H}^1}^2\le C_V$. Lemma \ref{lm:phi_stab} then implies that
\begin{align}\label{eq:estim_phi1-phi2_d}
\|\varphi_1-\varphi_2\|_{\mathcal{Q}_V}&\le C_V\|W\|_{L^1+L^{3,\infty}} \| u_1 - u_2\|_{L^2}.
\end{align}

Now we have
\begin{align}
\| u_1 - u_2\|_{L^2}^2=|\alpha_1-\alpha_2|^2+\|\varphi_1-\varphi_2\|^2_{L^2}. \label{eq:estim_phi1-phi2_e}
\end{align}
To conclude the proof, we show that $|\alpha_1-\alpha_2|$ can be controlled by $\|\varphi_1-\varphi_2\|^2_{L^2}$. Lemma \ref{lm:phi} and the arguments used in the proof of Lemma \ref{lm:phi_stab} ensure that $\|\varphi_j\|_{L^2}\le\frac12$. Indeed,
\begin{align*}
\|\varphi_j\|_{L^2}\le\|\varphi_j\|_{\mathcal{Q}_V}&=2\big\|R_{\lambda_j}\Pi_V^\perp\,(\bar{\FF}(W)*\abs{u_j}^2)u_j\big\|_{\mathcal{Q}_V}\\
&\le C_V\big\|(\bar{\FF}(W)*\abs{u_j}^2)u_j\big\|_{\mathcal{Q}_V^*}\\
&\le C_V\big\|(\bar{\FF}(W)*\abs{u_j}^2)u_j\big\|_{L^2}\\
&\le C_V\|W\|_{L^1+L^{3,\infty}} \|u_j\|_{{H}^1}^2\\
&\le C_V\|W\|_{L^1+L^{3,\infty}} ,
\end{align*}
where in the last inequality we used in addition that $\|u_j\|_{\dot{H}^1}^2\le C_V$. Therefore $\|\varphi_j\|_{L^2}\le\frac12$ for $\|W\|_{L^1+L^{3,\infty}}\le\varepsilon_V$ with $\varepsilon_V$ small enough and we can thus estimate
\begin{align}
\abs{\alpha_1-\alpha_2}&=\abs{\left(1-\norm{\varphi_1}_{L^2}^2\right)^{1/2}-\left(1-\norm{\varphi_2}_{L^2}^2\right)^{1/2}}\notag\\
&=\abs{\frac{\norm{\varphi_2}_{L^2}+\norm{\varphi_1}_{L^2}}{\left(1-\norm{\varphi_1}_{L^2}^2\right)^{1/2}+\left(1-\norm{\varphi_2}_{L^2}^2\right)^{1/2}}}\abs{\norm{\varphi_2}_{L^2}-\norm{\varphi_1}_{L^2}}\notag\\
&\le C\norm{\varphi_1-\varphi_2}_{L^2}. \label{eq:control_alpha}
\end{align}
Inserting this into \eqref{eq:estim_phi1-phi2_d}--\eqref{eq:estim_phi1-phi2_e} and using that $\|\varphi_1-\varphi_2\|^2_{L^2}\le\|\varphi_1-\varphi_2\|^2_{\mathcal{Q}_V}$, we finally conclude that
\begin{equation*}
\|\varphi_1-\varphi_2\|_{\mathcal{Q}_V}\le C_V\|W\|_{L^1+L^{3,\infty}} \|\varphi_1-\varphi_2\|_{\mathcal{Q}_V}.
\end{equation*}
For $\|W\|_{L^1+L^{3,\infty}}\le\varepsilon_V$ with $\varepsilon_V$ small enough, this implies that $\varphi_1=\varphi_2$, which concludes the proof of the proposition.
\end{proof}

\subsection{Expansion of the ground state energy for small coupling constants}\label{sec:asympt-Hartree}

Assuming that $V$ satisfies Hypotheses \ref{condV} and \ref{condGS} and that $W=g^2\omega^{-1}v^2$ satisfies Hypothesis \ref{condv0:2a}, it follows from Proposition \ref{prop:uniqueness Nelson} that $J$ has a unique ground state in $\mathcal{U}$, denoted by  $u_{\mathrm{gs}}$, such that~$\langle u_{\mathrm{gs}},u_V\rangle_{L^2}>0$. The next proposition provides the asymptotic expansion of the Hartree ground state energy (or equivalently the Klein--Gordon--Schr\"odinger ground state energy) stated in Proposition \ref{thm:mainlnc3}.
\begin{proposition}\label{prop:asymptotics_Nelson}
Suppose that $V$ satisfies Hypotheses \ref{condV} and \ref{condGS} and that $W=g^2\omega^{-1}v^2$ satisfies Hypothesis \ref{condv0:2a}. Then, as $g\to0$,
\begin{equation*}
E_V=J(u_{\mathrm{gs}})=\mu_V-g^2\int \big( \bar{\FF}\left(\omega^{-1}v^2\right)*\abs{u_V}^2\big) (x)\abs{u_V(x)}^2\diff x+\mathcal{O}(g^4).
\end{equation*}
\end{proposition}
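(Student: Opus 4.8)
The plan is to leverage the projection decomposition $u_{\mathrm{gs}} = \alpha u_V + \varphi$ with $\alpha = \langle u_{\mathrm{gs}}, u_V\rangle > 0$, $\varphi = \Pi_V^\perp u_{\mathrm{gs}}$, already set up in Lemmas \ref{lm:phi} and \ref{lm:phi_stab}, and to track the $g$-dependence of every quantity carefully. Write $W = g^2 \omega^{-1} v^2$, so $\|W\|_{L^1 + L^{3,\infty}} = g^2 \|\omega^{-1}v^2\|_{L^1 + L^{3,\infty}} = \bigO{g^2}$. First I would record the a priori bounds: from Proposition \ref{prop:uniqueness Nelson} and its proof, $\|u_{\mathrm{gs}}\|_{H^1}^2 \le C_V$ uniformly for $g$ small, and from the chain of estimates at the end of the proof of Proposition \ref{prop:uniqueness Nelson}, $\|\varphi\|_{\mathcal{Q}_V} \le C_V \|W\|_{L^1+L^{3,\infty}} = \bigO{g^2}$, hence $\|\varphi\|_{L^2} = \bigO{g^2}$ and $1 - \alpha^2 = \|\varphi\|_{L^2}^2 = \bigO{g^4}$, so $\alpha = 1 + \bigO{g^4}$.

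Next I would evaluate $E_V = J(u_{\mathrm{gs}}) = \langle u_{\mathrm{gs}}, H_V u_{\mathrm{gs}}\rangle - \langle u_{\mathrm{gs}}, (\bar{\FF}(W)*|u_{\mathrm{gs}}|^2)u_{\mathrm{gs}}\rangle$ term by term. For the quartic interaction term, by Lemma \ref{lm:convol-ineq} it is bounded by $C\|W\|_{L^1+L^{3,\infty}}\|u_{\mathrm{gs}}\|_{H^1}^2 \le C_V g^2$, and replacing $u_{\mathrm{gs}}$ by $\alpha u_V$ costs $\bigO{\|W\|_{L^1+L^{3,\infty}} \|\varphi\|_{H^1}(\ldots)} = \bigO{g^2 \cdot g^2} = \bigO{g^4}$ (using multilinearity of the map $(a,b,c,d)\mapsto \langle a,(\bar\FF(W)*\bar b c) d\rangle$, the bound $\|\varphi\|_{H^1} = \bigO{g^2}$, and $\alpha = 1 + \bigO{g^4}$); since $|\alpha|^4 = 1 + \bigO{g^4}$, this term equals $-g^2 \int (\bar\FF(\omega^{-1}v^2)*|u_V|^2)|u_V|^2 \diff x + \bigO{g^4}$, which is exactly the displayed second-order term. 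For the kinetic-plus-potential term, I would expand $\langle u_{\mathrm{gs}}, H_V u_{\mathrm{gs}}\rangle = |\alpha|^2 \mu_V + 2\Real\big(\bar\alpha \langle u_V, H_V \varphi\rangle\big) + \langle \varphi, H_V \varphi\rangle = |\alpha|^2 \mu_V + \langle \varphi, H_V \varphi\rangle$, since $\langle u_V, H_V\varphi\rangle = \mu_V \langle u_V, \varphi\rangle = 0$. Then $|\alpha|^2 \mu_V = \mu_V + \bigO{g^4}$, and it remains to control $\langle \varphi, H_V\varphi\rangle = \langle \varphi, (H_V - \mu_V)\varphi\rangle + \mu_V\|\varphi\|_{L^2}^2$; the second piece is $\bigO{g^4}$, and for the first piece I would use the formula $\varphi = 2R_{\lambda_V}\Pi_V^\perp(\bar\FF(W)*|u_{\mathrm{gs}}|^2)u_{\mathrm{gs}}$ from Lemma \ref{lm:phi}, so that $\langle \varphi, (H_V - \mu_V)\varphi\rangle$ is, up to the gap between $\mu_V$ and $\lambda_V$ (which is $\bigO{g^2}$), comparable to $\|(H_V-\mu_V)^{1/2}R_{\lambda_V}\Pi_V^\perp(\ldots)\|_{L^2}^2$; estimating via $\|\varphi\|_{\mathcal{Q}_V} \le C_V\|W\|_{L^1+L^{3,\infty}}\|u_{\mathrm{gs}}\|_{H^1}^2 = \bigO{g^2}$ and boundedness of $H_V - \mu_V$ relative to the $\mathcal{Q}_V$ norm on $\mathrm{Ran}\,\Pi_V^\perp$ gives $\langle \varphi, H_V\varphi\rangle = \bigO{g^4}$. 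Collecting all contributions yields the claimed expansion.

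The main obstacle I anticipate is bookkeeping the error terms at precisely order $g^4$ rather than merely $g^2$: the naive bounds on both the interaction term and $\langle\varphi, H_V\varphi\rangle$ only give $\bigO{g^2}$, and one genuinely needs the extra factor of $g^2$ coming from $\|\varphi\|_{\mathcal{Q}_V} = \bigO{g^2}$ (for the kinetic term) and from $\alpha = 1 + \bigO{g^4}$ together with multilinear Lipschitz estimates on the quartic form (for the interaction term). A secondary technical point is justifying that $H_V - \mu_V$ restricted to $\mathrm{Ran}\,\Pi_V^\perp$ is bounded from $\mathcal{Q}_V$ to $\mathcal{Q}_V^*$ with norm controlled by $\delta_V$ and the $\mathcal{Q}_V$-geometry, so that $\langle\varphi, (H_V-\mu_V)\varphi\rangle \lesssim \|\varphi\|_{\mathcal{Q}_V}^2$; this follows from the spectral gap in Hypothesis \ref{condGS} and the equivalence of norms on $\mathcal{Q}_V$, but should be stated cleanly. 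Once these two points are in place, the rest is the routine term-by-term expansion sketched above.
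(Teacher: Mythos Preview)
Your proposal is correct and follows essentially the same approach as the paper: decompose $u_{\mathrm{gs}}=\alpha u_V+\varphi$, use Lemma~\ref{lm:phi} to get $\|\varphi\|_{\mathcal{Q}_V}=\mathcal{O}(g^2)$ and hence $\alpha=1+\mathcal{O}(g^4)$, then expand $J(u_{\mathrm{gs}})$ term by term. The only cosmetic difference is in the bound on $\langle\varphi,H_V\varphi\rangle$: the paper writes $H_V\varphi=\lambda_V\varphi+(H_V-\lambda_V)\varphi$ and uses the Euler--Lagrange identity $(H_V-\lambda_V)\varphi=2\Pi_V^\perp(\bar{\mathcal F}(W)*|u_{\mathrm{gs}}|^2)u_{\mathrm{gs}}$ directly, whereas you argue via $|\langle\varphi,H_V\varphi\rangle|\lesssim\|\varphi\|_{\mathcal{Q}_V}^2$; note that this last inequality holds simply because the $H_V$ quadratic form is controlled by the $\mathcal{Q}_V$-norm (no spectral gap or projection is needed for that step).
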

\begin{proof}
As in Lemma \ref{lm:phi}, we decompose the Hartree ground state $u_{\mathrm{gs}} = \alpha_{\mathrm{gs}} u_V + \varphi_{\mathrm{gs}}$, with~$\alpha_{\mathrm{gs}}$ in~$\mathbb{C}$, $\varphi_{\mathrm{gs}}$ in~$\mathcal{Q}_V$ and~$u_V\perp \varphi_{\mathrm{gs}}$ in~$L^2$. By Lemma \ref{lm:phi}, we have
\begin{align*}
\|\varphi_{\mathrm{gs}}\|_{\mathcal{Q}_V}&=2g^2\big\|R_{\lambda_V}\Pi_V^\perp\,(\bar{\FF}(\omega^{-1}v^2)*\abs{u_{\mathrm{gs}}}^2)u_{\mathrm{gs}}\big\|_{\mathcal{Q}_V} \\
&\le C_V g^2 \big \| (\bar{\FF}(\omega^{-1}v^2)*\abs{u_{\mathrm{gs}}}^2)u_{\mathrm{gs}}\big\|_{\mathcal{Q}_V^*} \\
&\le C_V g^2 \big \| (\bar{\FF}(\omega^{-1}v^2)*\abs{u_{\mathrm{gs}}}^2)u_{\mathrm{gs}}\big\|_{L^2} = \mathcal{O}(g^2),
\end{align*}
the last equality being a consequence of Lemma \ref{lm:convol-ineq}. Similarly, using that $R_{\lambda_V}=(H_V-\lambda_V)^{-1}$, we can estimate
\begin{align}
| \langle \varphi_{\mathrm{gs}},H_V\varphi_{\mathrm{gs}}\rangle | &\le |\lambda_V| \|\varphi_{\mathrm{gs}}\|_{L^2}^2
+g^4 \big|\big\langle \Pi_V^\perp\,(\bar{\FF}(\omega^{-1}v^2)*\abs{u_{\mathrm{gs}}}^2)u_{\mathrm{gs}} , R_{\lambda_V} \Pi_V^\perp\,(\bar{\FF}(\omega^{-1}v^2)*\abs{u_{\mathrm{gs}}}^2)u_{\mathrm{gs}}\big\rangle \big| \notag\\
&= \mathcal{O}(g^4). \label{eq:estima}
\end{align}
Here we used \eqref{eq:lambda<} which shows that $|\lambda_V|$ is bounded by $C_V$ and that $R_{\lambda_V}\Pi_V^\perp\in\LL(\mathcal{Q}_V^*,\mathcal{Q}_V).$ Besides, since $|\alpha_{\mathrm{gs}}|^2=1-\|\varphi_{\mathrm{gs}}\|_{L^2}^2$, we have
\begin{equation}\label{eq:estimb}
|\alpha_{\mathrm{gs}}|=1+\mathcal{O}(g^4).
\end{equation}
We can then compute
\begin{align*}
J(u_{\mathrm{gs}})&=\ps{\alpha_{\mathrm{gs}} u_V+\varphi_{\mathrm{gs}}}{H_V(\alpha_{\mathrm{gs}} u_V+\varphi_{\mathrm{gs}})}_{L^2}\\
&\quad-g^2\int \bar{\FF}(\omega^{-1}v^2)*\abs{\alpha_{\mathrm{gs}} u_V+\varphi_{\mathrm{gs}}}^2(x)\abs{\alpha_{\mathrm{gs}} u_V(x)+\varphi_{\mathrm{gs}}(x)}^2\diff x\\
&=\abs{\alpha_{\mathrm{gs}}}^2 \mu_V+\ps{\varphi_{\mathrm{gs}}}{H_V\varphi_{\mathrm{gs}}}_{L^2}\\
&\quad-g^2\abs{\alpha_{\mathrm{gs}}}^4\int \bar{\FF}(\omega^{-1}v^2)*\abs{u_V}^2(x)\abs{u_V(x)}^2\diff x + \mathcal{O}(g^4),
\end{align*}
where in the second equality we used that $H_Vu_V=\mu_Vu_V$, $u_V\perp \varphi_{\mathrm{gs}}$ in $L^2$, and Lemma \ref{lm:convol-ineq} in order to obtain the expansion of the convolution term.
Inserting \eqref{eq:estima} and \eqref{eq:estimb} into the last equality concludes the proof of the proposition.
\end{proof}

We conclude this section with the proof of Proposition \ref{prop:diff_GSE} which provides the difference, at second order in the coupling constant, between the ground state energy of the Pauli-Fierz Hamiltonian $\mathbb{H}$ and the ground state energy of the Klein--Gordon--Schr\"odinger energy functional.

\begin{proof}[Proof of Proposition \ref{prop:diff_GSE}]
Under the conditions of Proposition \ref{prop:diff_GSE}, using perturbative methods developed in the literature to study ground states of Pauli-Fierz Hamiltonians (see e.g. \cite{BachFrohlichSigal99,Hainzl03,BachFrohlichPizzo09,GriesemerHasler09,Sigal09,HaslerHerbst11}), it is not difficult to verify that the second-order asymptotic expansion of $\inf\sigma(\mathbb{H})$ is given by
\begin{multline*}
\inf \sigma(\mathbb{H})\\
=\mu_V-g^2 \ps{u_V\otimes\Omega}{a(h_x)(\Pi_V\otimes\Pi_\Omega)^\perp\big(\mathbb{H}_{\mathrm{free}}-\mu_V\big)^{-1} (\Pi_V\otimes\Pi_\Omega)^\perp a^*(h_x)u_V\otimes\Omega} +o(g^2) \,,
\end{multline*}
where we recall that $\Pi_V$ stands for the orthogonal projection onto the ground state $u_V$ of $H_V$, $\Pi_\Omega$ stands for the orthogonal projection onto the Fock vacuum $\Omega$, and $(\Pi_V\otimes\Pi_\Omega)^\perp=\mathbf{I}-\Pi_\Omega$. Moreover, $\mathbb{H}_{\mathrm{free}}=H_V\otimes\mathbf{I}_{\mathrm{f}}+\mathbf{I}_{\mathrm{el}}\otimes \mathbb{H}_{\mathrm{f}}$. Decomposing $(\Pi_V\otimes\Pi_\Omega)^\perp=\Pi_V\otimes\Pi_\Omega^\perp+\Pi_V^\perp\otimes\mathbf{I}_\text{f}$ and using \eqref{eq:comput-2ndorder}, we obtain
\begin{multline*}
\inf \sigma(\mathbb{H})=\mu_V-g^2 \int_{\R^3} \big(\bar{\FF}(\omega^{-1}v^2)*\abs{u_V}^2\big)(x)\abs{u_V(x)}^2\diff x\\
-g^2 \ps{u_V\otimes\Omega}{a(h_x)(\Pi_V^\perp\otimes\mathbf{I}_{\mathrm{f}})\big(\mathbb{H}_{\mathrm{free}}-\mu_V\big)^{-1}(\Pi_V^\perp\otimes\mathbf{I}_{\mathrm{f}})a^*(h_x)u_V\otimes\Omega} +o(g^2) \,.
\end{multline*}
A direct computation gives
\begin{multline*}
 \ps{u_V\otimes\Omega}{a(h_x)(\Pi_V^\perp\otimes\mathbf{I}_{\mathrm{f}})\big(\mathbb{H}_{\mathrm{free}}-\mu_V\big)^{-1}(\Pi_V^\perp\otimes\mathbf{I}_{\mathrm{f}})a^*(h_x)u_V\otimes\Omega} \\
=\int_{\mathbb{R}^3} v(k)^2 \big\langle u_V , e^{ikx} \Pi_V^\perp\big({H}_V-\mu_V+|k|\big)^{-1} \Pi_V^\perp e^{-ikx} u_V\big\rangle_{L^2_x}\mathrm{d}k, 
\end{multline*}
which, together with Proposition \ref{thm:mainlnc3}, proves Proposition \ref{prop:diff_GSE}.
\end{proof}

\subsection{Ultraviolet limit}\label{subsec:UV}

We suppose in this section that the coupling function is of the form~$v_\Lambda=v\mathds{1}_{|k|\le\Lambda}$ for some ultraviolet parameter $\Lambda>0$ and we write
\begin{equation*}
W_\Lambda:=g^2\omega^{-1}v_\Lambda=W\mathds{1}_{|k|\le\Lambda},
\end{equation*}
where we recall that $\omega$ stands for the dispersion relation for the bosons and $W=g^2\omega^{-1}v^2$.

Our fist concern is to show that the ground state energies $E_{V,\Lambda}$ defined by
\begin{equation*}
E_{V,\Lambda}:=\inf_{u\in\mathcal{U}}J_\Lambda(u),  \quad J_\Lambda(u):=\ps{u}{H_V\,u}-\int_{\R^3}\big(\bar{\FF}\left(W_\Lambda\right)*\abs{u}^2\big)(x)  \abs{u(x)}^2\diff x,
\end{equation*}
converge to $E_V$ as $\Lambda\to\infty$. As mentioned in the introduction, the main difficulty comes from the fact that, in general, $W_\Lambda$ does \emph{not} converge to $W$ in $L^1+L^{3,\infty}$. Nevertheless, we can rely on the following easy lemma.
\begin{lemma}\label{lm:Leb}
Suppose that $W$ satisfies Hypothesis \ref{condv0:2a}. Let $u$ in~$\mathcal{Q}_V$. Then
\begin{equation}\label{eq:lim1}
\int_{\R^3}\big(\bar{\FF}\left(W_\Lambda\right)*\abs{u}^2\big)(x)  \abs{u(x)}^2\diff x \underset{\Lambda\to\infty}{\longrightarrow} \int_{\R^3}\big(\bar{\FF}(W)*\abs{u}^2\big)(x)  \abs{u(x)}^2\diff x,
\end{equation}
and
\begin{equation}\label{eq:lim2}
\big\| (\bar{\FF}(W-W_\Lambda)*\abs{u}^2)u \big\|_{L^2} \underset{\Lambda\to\infty}{\longrightarrow} 0.
\end{equation}
\end{lemma}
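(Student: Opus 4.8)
The plan is to prove both convergences by a single application of Lebesgue's dominated convergence theorem applied to the $k$-integral defining the relevant quantities, after rewriting everything in Fourier variables so that the cutoff $\mathds{1}_{|k|\le\Lambda}$ appears explicitly. By the convention \eqref{eq:convfourier} and Plancherel's formula, one has
\begin{equation*}
\int_{\R^3}\big(\bar{\FF}(W_\Lambda)*\abs{u}^2\big)(x)\abs{u(x)}^2\diff x=\int_{\R^3}W(k)\mathds{1}_{|k|\le\Lambda}\big|\FF(|u|^2)(k)\big|^2\diff k,
\end{equation*}
and likewise the expression with $W$ in place of $W_\Lambda$ corresponds to dropping the indicator. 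The integrand $W(k)\big|\FF(|u|^2)(k)\big|^2$ is pointwise nonnegative and converges pointwise (monotonically, in fact) to itself as $\Lambda\to\infty$, while it is dominated by $W(k)\big|\FF(|u|^2)(k)\big|^2$ for all $\Lambda$; the point is that this dominating function is in $L^1(\R^3)$. That integrability is exactly the content of the computation carried out in the proof of Proposition~\ref{prop:KGS-Hartree}: writing $W=W_1+W_2$ as in Hypothesis~\ref{condv0:2a} and using Lemma~\ref{lm:convol-ineq} together with $\|\FF(|u|^2)\|_{L^\infty}\le\|u\|_{L^2}^2<\infty$, one bounds $\int W|\FF(|u|^2)|^2$ by $\|W_1\|_{L^1}\|u\|_{L^2}^4+C\|W_2\|_{L^{3,\infty}}\|u\|_{\dot H^1}^2\|u\|_{L^2}^2<\infty$, which is finite since $u\in\mathcal{Q}_V\subset H^1(\R^3)$. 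Dominated convergence then gives \eqref{eq:lim1} immediately. (One could also invoke monotone convergence here since the integrand increases with $\Lambda$; dominated convergence is the cleaner route if one prefers not to track monotonicity of $W\ge0$.)

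For \eqref{eq:lim2} I would argue similarly but now with the $L^2_x$-norm squared. Set $g_\Lambda:=\bar{\FF}(W-W_\Lambda)*|u|^2=\bar{\FF}(W\mathds{1}_{|k|>\Lambda})*|u|^2$. The plan is first to control $\|g_\Lambda u\|_{L^2}$ by the right-hand sides of Lemma~\ref{lm:convol-ineq}(i) and (iii) applied to the function $W\mathds{1}_{|k|>\Lambda}$ in place of $W$: decomposing $W\mathds{1}_{|k|>\Lambda}=W_1\mathds{1}_{|k|>\Lambda}+W_2\mathds{1}_{|k|>\Lambda}$ one gets
\begin{equation*}
\|g_\Lambda u\|_{L^2}\lesssim \|W_1\mathds{1}_{|k|>\Lambda}\|_{L^1}\|u\|_{L^2}^3+\|W_2\mathds{1}_{|k|>\Lambda}\|_{L^{3,\infty}}\|u\|_{L^2}\|u\|_{\dot H^1}^2.
\end{equation*}
Now $\|W_1\mathds{1}_{|k|>\Lambda}\|_{L^1}\to0$ as $\Lambda\to\infty$ by dominated convergence in $L^1$ (since $W_1\in L^1$). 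The term $\|W_2\mathds{1}_{|k|>\Lambda}\|_{L^{3,\infty}}$, however, need \emph{not} tend to $0$, since $W_2$ is only in the weak space $L^{3,\infty}$, which is not the closure of nice functions — this is precisely the difficulty flagged in the introduction. So for this term I would instead return to the direct Fourier-side expression: by Plancherel,
\begin{equation*}
\big\|g_\Lambda u\big\|_{L^2}^2=\big\langle u, g_\Lambda^2 u\big\rangle\quad\text{and}\quad \big\langle u,g_\Lambda u\, u\big\rangle=\int_{\R^3}W(k)\mathds{1}_{|k|>\Lambda}\,\overline{\FF(|u|^2)(k)}\,\FF(\bar u\, g_\Lambda u)(k)\,\diff k,
\end{equation*}
but the cleanest path is to note $\|g_\Lambda u\|_{L^2}^2\le\|g_\Lambda\|_{L^\infty}\|g_\Lambda u\|_{L^1}\cdot\ldots$ — actually the most robust route: write $\|g_\Lambda u\|_{L^2}^2 = \int (\bar{\FF}(W\mathds 1_{>\Lambda})*|u|^2)(x)\,\overline{(\bar{\FF}(W\mathds 1_{>\Lambda})*|u|^2)(x)}\,|u(x)|^2\,\diff x$ and apply Lemma~\ref{lm:convol-ineq}(i)--(iii) to bound this by $C(u)\int_{\R^3}W(k)\mathds{1}_{|k|>\Lambda}\,|\FF(|u|^2)(k)|\,|\FF(h_\Lambda)(k)|\,\diff k$ with $h_\Lambda:=\overline{g_\Lambda u}\,u$ having $L^1$-norm $\lesssim\|g_\Lambda u\|_{L^2}\|u\|_{L^2}$; dividing through one gets a bound of the form $\|g_\Lambda u\|_{L^2}\lesssim \big(\int_{\R^3}W(k)\mathds{1}_{|k|>\Lambda}|\FF(|u|^2)(k)|^2\diff k\big)^{1/2}\|u\|_{\text{something}}$ — no, this circular reasoning is getting tangled.

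Let me restate the clean version for \eqref{eq:lim2}: the genuinely correct and simplest argument is to interpret $\|g_\Lambda u\|_{L^2}^2$ directly. One has $g_\Lambda u\to 0$ weakly or one uses: by the exact computation in the proof of Proposition~\ref{prop:KGS-Hartree}, for \emph{any} measurable $W'$ satisfying Hypothesis~\ref{condv0:2a}, $\|(\bar{\FF}(W')*|u|^2)u\|_{L^2}^2\le C(u)\big(\|W'_1\|_{L^1}+\|W'_2\|_{L^{3,\infty}}\big)^2$. The obstruction is that we cannot make $\|W_2\mathds 1_{>\Lambda}\|_{L^{3,\infty}}$ small. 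So instead I would choose, for fixed $\varepsilon>0$, a \emph{new} decomposition $W\mathds{1}_{|k|>\Lambda}=W_1^\Lambda+W_2^\Lambda$ where we first fix a large radius $\rho$ and a threshold to peel off an $L^1$ piece: on the region $\{|k|>\Lambda\}$ we have $W=W_1+W_2$ with $W_2\in L^{3,\infty}$, and for $\Lambda$ large the set $\{|k|>\Lambda\}$ is "small at infinity" so that $\|W_2\mathds 1_{|k|>\Lambda}\|_{L^{3,\infty}}$ — hmm, still not obviously small. The honest resolution, which is what the paper's Lemma intends: apply dominated convergence to the \emph{scalar} quantity $\int W\mathds 1_{|k|>\Lambda}|\FF(|u|^2)|^2\to 0$ (dominated by $W|\FF(|u|^2)|^2\in L^1$ as established above), and bound $\|g_\Lambda u\|_{L^2}$ by a Lorentz-space estimate whose $W$-factor appears only through $\int W\mathds 1_{>\Lambda}|\FF(|u|^2)|^2$; the key observation is that the estimate in Lemma~\ref{lm:convol-ineq}(iii), traced carefully, can be arranged to produce $\|W^{1/2}\mathds 1_{>\Lambda}\FF(|u|^2)\|_{L^2}$ times a finite constant depending on $\|u\|_{H^1}$. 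I expect the main obstacle to be precisely this: arranging the bound on $\|g_\Lambda u\|_{L^2}$ so that the $W$-dependence enters only through the convergent scalar integral $\int W\mathds 1_{|k|>\Lambda}|\FF(|u|^2)|^2\,\diff k$ rather than through $\|W_2\mathds 1_{>\Lambda}\|_{L^{3,\infty}}$, since the latter does not vanish. Once that reorganization of Lemma~\ref{lm:convol-ineq}'s estimates is in place, both \eqref{eq:lim1} and \eqref{eq:lim2} follow from a single application of dominated convergence to $k\mapsto W(k)|\FF(|u|^2)(k)|^2$.
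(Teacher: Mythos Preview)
Your treatment of \eqref{eq:lim1} is correct and matches the paper's proof exactly: rewrite the difference as $\int_{\R^3} W(k)\mathds{1}_{|k|>\Lambda}\,|\FF(|u|^2)(k)|^2\,\diff k$, observe that $W\,|\FF(|u|^2)|^2\in L^1(\R^3)$ (via Lemma~\ref{lm:convol-ineq} and $\FF(|u|^2)\in L^\infty$), and apply dominated convergence.

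For \eqref{eq:lim2}, however, you do not arrive at a proof. You correctly identify the obstruction --- $\|W_2\mathds{1}_{|k|>\Lambda}\|_{L^{3,\infty}}$ need not vanish --- but the workaround you sketch (reorganising the Lorentz estimates so that only the scalar $\int W\mathds{1}_{|k|>\Lambda}|\FF(|u|^2)|^2$ appears) is left vague and unexecuted. The paper's argument is much simpler than what you attempt: it is dominated convergence in $L^2_x$, not a refinement of the Lorentz-norm bounds. The key fact you overlook is that Lemma~\ref{lm:convol-ineq} (part (ii) for $W_2$, part (i) trivially for $W_1$) gives $W\FF(|u|^2)\in L^1(\R^3)$. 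Writing $g_\Lambda(x)=(2\pi)^{-3}\int_{|k|>\Lambda}W(k)\FF(|u|^2)(k)e^{ikx}\,\diff k$, this $L^1$-membership immediately yields both (a) $g_\Lambda(x)\to0$ for every $x$ (dominated convergence in $k$), and (b) $|g_\Lambda(x)|\le(2\pi)^{-3}\|W\FF(|u|^2)\|_{L^1}$ uniformly in $x$ and $\Lambda$. Hence $|g_\Lambda(x)u(x)|\le C|u(x)|\in L^2_x$, and dominated convergence in $L^2_x$ gives $\|g_\Lambda u\|_{L^2}\to0$.

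In short: you were looking for a quantitative decay of a norm of $W\mathds{1}_{|k|>\Lambda}$, which is not available; the paper instead exploits pointwise convergence of $g_\Lambda u$ together with an $L^2$-dominating function independent of $\Lambda$.
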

\begin{proof}
To prove \eqref{eq:lim1}, we compute
\begin{align*}
&\int_{\R^3}\big(\bar{\FF}\left(W\right)*\abs{u}^2\big)(x)  \abs{u(x)}^2\diff x - \int_{\R^3}\big(\bar{\FF}(W_\Lambda)*\abs{u}^2\big)(x)  \abs{u(x)}^2\diff x \\
&=\int_{\R^3} (W(k)-W_\Lambda(k)) \big|\FF(|u|^2)\big|^2(k) \diff k =\int_{\R^3} W(k)\mathds{1}_{|k|\ge\Lambda} \big|\FF(|u|^2)\big|^2(k) \diff k.
\end{align*}
Clearly, for all $k$ in~$\mathbb{R}^3$, $W(k)\mathds{1}_{|k|\ge\Lambda} |\FF(|u|^2)|^2(k)\to 0$ as $\Lambda\to\infty$. Moreover, since $W$ is in~$L^1+L^{3,\infty}$ and $u$ in~$\mathcal{Q}_V$, the proof of Lemma \ref{lm:convol-ineq} shows that  $W\FF(|u|^2)$ is in~$ L^1$ and hence~$W |\FF(|u|^2)|^2$ belongs to~$ L^1$. Lebesgue's dominated convergence Theorem then proves~\eqref{eq:lim1}.

The proof of \eqref{eq:lim2} is similar, using Lemma \ref{lm:convol-ineq}(iii) together with Lebesgue's dominated convergence in $L^2$.
\end{proof}
Now we can prove the convergence of the ground state energies in the ultraviolet limit. The next proposition implies Proposition \ref{prop:conv-GS-en-intro} from the introduction.
\begin{proposition}[Ultraviolet limit of the ground state energies]\label{prop:conv-GS-en}
Suppose that $V$ satisfies Hypothesis \ref{condV} and that $W$ satisfies Hypothesis \ref{condv0:2a}. Then
\begin{equation*}
E_{V,\Lambda} \underset{\Lambda\to\infty}{\longrightarrow} E_{V}.
\end{equation*}
\end{proposition}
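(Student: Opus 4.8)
The plan is to obtain $E_{V,\Lambda}\to E_{V}$ by squeezing $E_{V,\Lambda}$ between $E_V$ and a family of quantities that tend to $E_V$, exploiting only the monotonicity of the ultraviolet truncation together with the pointwise convergence already established in Lemma~\ref{lm:Leb}. First I would record the lower bound $E_{V,\Lambda}\ge E_V$ for every $\Lambda>0$. Since $W=g^2\omega^{-1}v^2\ge0$, we have $0\le W_\Lambda=W\mathds{1}_{|k|\le\Lambda}\le W$ pointwise, so for every $u\in\mathcal{U}$, using the identity behind \eqref{eq:Hartree_en} (itself a consequence of the convention \eqref{eq:convfourier}),
\[
\int_{\R^3}\big(\bar{\FF}(W_\Lambda)*|u|^2\big)(x)\,|u(x)|^2\diff x=\int_{\R^3}W_\Lambda(k)\,\big|\FF(|u|^2)(k)\big|^2\diff k\le\int_{\R^3}W(k)\,\big|\FF(|u|^2)(k)\big|^2\diff k,
\]
the right-hand side being finite for any $u\in\mathcal{U}\subset\mathcal{Q}_V$ by Hypothesis~\ref{condv0:2a} and (the proof of) Lemma~\ref{lm:convol-ineq}. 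Hence $J_\Lambda(u)\ge J(u)\ge E_V$ for all $u\in\mathcal{U}$, and taking the infimum over $u$ gives $E_{V,\Lambda}\ge E_V$. (The same pointwise inequality shows moreover that $\Lambda\mapsto E_{V,\Lambda}$ is non-increasing, so $E_{V,\Lambda}$ converges to some limit $\ge E_V$; this monotonicity is not needed for what follows.)

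Next I would prove the matching upper bound. Fix an arbitrary $u\in\mathcal{U}$. Since $u\in\mathcal{Q}_V$, Lemma~\ref{lm:Leb}, eq.~\eqref{eq:lim1}, gives $J_\Lambda(u)\to J(u)$ as $\Lambda\to\infty$; combined with $E_{V,\Lambda}\le J_\Lambda(u)$ this yields $\limsup_{\Lambda\to\infty}E_{V,\Lambda}\le J(u)$. As $u\in\mathcal{U}$ is arbitrary, $\limsup_{\Lambda\to\infty}E_{V,\Lambda}\le\inf_{u\in\mathcal{U}}J(u)=E_V$. Together with the lower bound this gives
\[
E_V\le\liminf_{\Lambda\to\infty}E_{V,\Lambda}\le\limsup_{\Lambda\to\infty}E_{V,\Lambda}\le E_V,
\]
that is, $E_{V,\Lambda}\to E_V$.

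There is no real obstacle here: the argument is a one-line sandwich once Lemma~\ref{lm:Leb} is available, and it remains valid even if $E_V=-\infty$ (in which case it shows $E_{V,\Lambda}\to-\infty$, each $E_{V,\Lambda}$ being itself finite because $W_\Lambda\in L^1$, so that $E_{V,\Lambda}\ge\mu_V-\|W_\Lambda\|_{L^1}$). The only substantive point is the convergence $J_\Lambda(u)\to J(u)$ at fixed $u$, which rests on dominated convergence using the integrability of $W\FF(|u|^2)$ proved in Lemma~\ref{lm:convol-ineq}; all the genuine difficulty of the ultraviolet limit — in particular the fact that $W_\Lambda$ does \emph{not} converge to $W$ in $L^1+L^{3,\infty}$ — has been isolated in Lemma~\ref{lm:Leb}, not in the passage from functionals to their infima.
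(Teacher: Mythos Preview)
Your proof is correct and follows essentially the same approach as the paper: the lower bound $E_{V,\Lambda}\ge E_V$ comes from $0\le W_\Lambda\le W$ pointwise, and the upper bound $\limsup_{\Lambda\to\infty}E_{V,\Lambda}\le E_V$ comes from applying Lemma~\ref{lm:Leb} at a fixed (near-)minimizer. The only cosmetic difference is that the paper first invokes the monotonicity of $\Lambda\mapsto E_{V,\Lambda}$ to ensure the limit exists before bounding it via an $\varepsilon$-minimizer, whereas you bypass this by working directly with $\limsup$ and $\liminf$ and then taking the infimum over all $u\in\mathcal{U}$; your formulation has the minor advantage of handling the case $E_V=-\infty$ without any rephrasing.
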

\begin{proof}
First, we observe that for $0<\Lambda\le\Lambda'$,
\begin{align*}
\int_{\R^3}\big(\bar{\FF}\left(W_\Lambda\right)*\abs{u}^2\big)(x) \abs{u(x)}^2\diff x &= \int_{\R^3} W_\Lambda(k) \big|\FF(|u|^2)\big|^2(k) \diff k \le \int_{\R^3} W_{\Lambda'}(k) \big|\FF(|u|^2)\big|^2(k) \diff k,
\end{align*}
and therefore $J_\Lambda\ge J_{\Lambda'}$. Hence $\Lambda\mapsto E_{V,\Lambda}$ is non-increasing on $(0,\infty)$ and bounded below by~$E_{V}$. We set 
\begin{equation*}
E_{V,\infty}:=\lim_{\Lambda\to\infty}E_{V,\Lambda} \ge E_{V}.
\end{equation*}

Now we show that $E_{V,\infty}\le E_{V}$. Let $\varepsilon>0$ and let $u_\varepsilon\in\mathcal{U}$ be such that $J(u_\varepsilon)\le E_{V}+\varepsilon$. We have
\begin{align*}
E_{V,\Lambda}&\le J_\Lambda(u_\varepsilon) = J(u_\varepsilon) - \int_{\R^3} (W_\Lambda(k)-W(k)) \big|\FF(|u_\varepsilon|^2)\big|^2(k) \diff k \\
&\le E_V+\varepsilon - \int_{\R^3} (W_\Lambda(k)-W(k)) \big|\FF(|u_\varepsilon|^2)\big|^2(k) \diff k.
\end{align*}
Applying Lemma \ref{lm:Leb}, this yields
\begin{equation*}
E_{V,\infty} = \lim_{\Lambda\to\infty} E_{V,\Lambda}\le E_{V}+\varepsilon.
\end{equation*}
Since $\varepsilon>0$ is arbitrary, this concludes the proof of the proposition.
\end{proof}

Next, we establish the convergence of the ground states of $J_\Lambda$ to the ground state of $J$, as~$\Lambda\to\infty$. Combined with Proposition \ref{prop:KGS-Hartree}, the next result implies Proposition \ref{prop:conv-GS2-en-intro} from the introduction. Some arguments of the proof below are similar to those used in the proof of the uniqueness of the minimizer of $J$ in Section \ref{sec:uniq-min-Hartree}. We do not give all the details.
\begin{proposition}[Ultraviolet limit of the ground states]
Suppose that $V$ satisfies Hypotheses \ref{condV} and \ref{condGS} and that $W$ satisfies Hypothesis \ref{condv0:2a}. There exists $\varepsilon_V>0$ such that, if
\begin{equation*}
\|W\|_{L^1+L^{3,\infty}}\le\varepsilon_V,
\end{equation*}
then for all $\Lambda>0$, $J_\Lambda$ and $J$ have unique minimizers $u_{\Lambda,\mathrm{gs}}$ and $u_{\mathrm{gs}}$ in $\mathcal{U}$, respectively, such that $\langle u_{\Lambda,\mathrm{gs}},u_V\rangle_{L^2}>0$ and $\langle u_{\mathrm{gs}},u_V\rangle_{L^2}>0$. They satisfy
\begin{equation*}
\big\|u_{\Lambda,\mathrm{gs}} - u_{\mathrm{gs}} \big\|_{\mathcal{Q}_V} \underset{\Lambda\to\infty}{\longrightarrow} 0.
\end{equation*}
\end{proposition}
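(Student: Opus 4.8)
The plan is to mimic, at the level of the Hartree functional, the uniqueness argument of Section~\ref{sec:uniq-min-Hartree}, treating $J_\Lambda$ as a perturbation of $J$ in which the possibly non-small change of convolution kernel $W\mapsto W_\Lambda$ is handled not by an operator-norm estimate but through the weak convergence of Lemma~\ref{lm:Leb}. First I would record that, since truncation increases neither the $L^1$ norm nor the $L^{3,\infty}$ quasi-norm, one has $\|W_\Lambda\|_{L^1+L^{3,\infty}}\le\|W\|_{L^1+L^{3,\infty}}\le\varepsilon_V$ for every $\Lambda>0$; moreover $W_\Lambda=g^2\omega^{-1}v_\Lambda^2\ge0$ and $W_\Lambda$ satisfies Hypothesis~\ref{condv0:2a} (indeed $W_\Lambda\in L^1$). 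Hence Proposition~\ref{prop:uniqueness Nelson} applies uniformly to $J$ and to all the $J_\Lambda$, yielding the claimed existence and uniqueness of $u_{\mathrm{gs}}$ and of each $u_{\Lambda,\mathrm{gs}}$, with $\langle u_{\mathrm{gs}},u_V\rangle>0$ and $\langle u_{\Lambda,\mathrm{gs}},u_V\rangle>0$. Running the $H^1$ bound from the proof of Proposition~\ref{prop:uniqueness Nelson}, but with the elementary estimate $E_{V,\Lambda}\le J_\Lambda(u_V)=\mu_V-\|W_\Lambda^{\frac12}\FF(|u_V|^2)\|_{L^2}^2\le\mu_V<\mu_V+\tfrac12\delta_V$ (and its analogue for $E_V$) in place of~\eqref{eq:EV<muV}, gives $\|u_{\Lambda,\mathrm{gs}}\|_{H^1}^2\le C_V$ and $\|u_{\mathrm{gs}}\|_{H^1}^2\le C_V$ with $C_V$ independent of $\Lambda$.

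Next I would decompose $u_{\Lambda,\mathrm{gs}}=\alpha_\Lambda u_V+\varphi_\Lambda$ and $u_{\mathrm{gs}}=\alpha_{\mathrm{gs}}u_V+\varphi_{\mathrm{gs}}$ with $\alpha_\Lambda,\alpha_{\mathrm{gs}}>0$ and $\varphi_\Lambda,\varphi_{\mathrm{gs}}\in\mathcal{Q}_V$ orthogonal to $u_V$ in $L^2$. Since the proof of Lemma~\ref{lm:phi} uses only Hypotheses~\ref{condV}, \ref{condGS}, \ref{condv0:2a} and the smallness bound, it applies verbatim to $J_\Lambda$, giving
\[
\varphi_{\mathrm{gs}}=2R_{\lambda_V}\Pi_V^\perp(\bar{\FF}(W)*|u_{\mathrm{gs}}|^2)u_{\mathrm{gs}},\qquad \varphi_\Lambda=2R_{\lambda_{V,\Lambda}}\Pi_V^\perp(\bar{\FF}(W_\Lambda)*|u_{\Lambda,\mathrm{gs}}|^2)u_{\Lambda,\mathrm{gs}},
\]
with $\lambda_V=E_V-\langle u_{\mathrm{gs}},(\bar{\FF}(W)*|u_{\mathrm{gs}}|^2)u_{\mathrm{gs}}\rangle$ and $\lambda_{V,\Lambda}=E_{V,\Lambda}-\langle u_{\Lambda,\mathrm{gs}},(\bar{\FF}(W_\Lambda)*|u_{\Lambda,\mathrm{gs}}|^2)u_{\Lambda,\mathrm{gs}}\rangle$; both satisfy $\lambda\le\mu_V<\mu_V+\tfrac12\delta_V$, so $\|R_{\lambda}\Pi_V^\perp\|_{\LL(\mathcal{Q}_V^*,\mathcal{Q}_V)}\le 4\delta_V^{-1}$ as in~\eqref{eq:estim_R2}. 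Exactly as in the proof of Proposition~\ref{prop:uniqueness Nelson}, this together with Lemma~\ref{lm:convol-ineq} gives $\|\varphi_\Lambda\|_{L^2},\|\varphi_{\mathrm{gs}}\|_{L^2}\le C_V\|W\|_{L^1+L^{3,\infty}}\le\tfrac12$ for $\varepsilon_V$ small, whence, by the computation~\eqref{eq:control_alpha}, $|\alpha_\Lambda-\alpha_{\mathrm{gs}}|\le C\|\varphi_\Lambda-\varphi_{\mathrm{gs}}\|_{L^2}$, and consequently $\|u_{\Lambda,\mathrm{gs}}-u_{\mathrm{gs}}\|_{L^2}\le C\|\varphi_\Lambda-\varphi_{\mathrm{gs}}\|_{\mathcal{Q}_V}$ and $\|u_{\Lambda,\mathrm{gs}}-u_{\mathrm{gs}}\|_{\mathcal{Q}_V}\le C_V\|\varphi_\Lambda-\varphi_{\mathrm{gs}}\|_{\mathcal{Q}_V}$. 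So it suffices to show $\|\varphi_\Lambda-\varphi_{\mathrm{gs}}\|_{\mathcal{Q}_V}\to0$.

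The heart of the argument is then the estimate
\[
\|\varphi_\Lambda-\varphi_{\mathrm{gs}}\|_{\mathcal{Q}_V}\le C_V\|W\|_{L^1+L^{3,\infty}}\,\|u_{\Lambda,\mathrm{gs}}-u_{\mathrm{gs}}\|_{L^2}+\eta(\Lambda),\qquad \eta(\Lambda)\xrightarrow[\Lambda\to\infty]{}0,
\]
which I would obtain by following the proof of Lemma~\ref{lm:phi_stab}: split $\varphi_\Lambda-\varphi_{\mathrm{gs}}$ into a resolvent-difference term $2(R_{\lambda_{V,\Lambda}}-R_{\lambda_V})\Pi_V^\perp(\bar{\FF}(W_\Lambda)*|u_{\Lambda,\mathrm{gs}}|^2)u_{\Lambda,\mathrm{gs}}$ and a source-difference term $2R_{\lambda_V}\Pi_V^\perp\big[(\bar{\FF}(W_\Lambda)*|u_{\Lambda,\mathrm{gs}}|^2)u_{\Lambda,\mathrm{gs}}-(\bar{\FF}(W)*|u_{\mathrm{gs}}|^2)u_{\mathrm{gs}}\big]$. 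In the latter I would write the bracket as $\big[(\bar{\FF}(W_\Lambda)*|u_{\Lambda,\mathrm{gs}}|^2)u_{\Lambda,\mathrm{gs}}-(\bar{\FF}(W_\Lambda)*|u_{\mathrm{gs}}|^2)u_{\mathrm{gs}}\big]+(\bar{\FF}(W_\Lambda-W)*|u_{\mathrm{gs}}|^2)u_{\mathrm{gs}}$: the first piece is bounded in $L^2$ (hence in $\mathcal{Q}_V^*$) by $C_V\|W\|_{L^1+L^{3,\infty}}\|u_{\Lambda,\mathrm{gs}}-u_{\mathrm{gs}}\|_{L^2}$ exactly as in~\eqref{eq:estim_S2_a} (using the uniform $H^1$ bounds), while the second tends to $0$ in $L^2$ by Lemma~\ref{lm:Leb}\,\eqref{eq:lim2} applied to the \emph{fixed} function $u_{\mathrm{gs}}$. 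For the resolvent-difference term, the resolvent identity and $\|R_\lambda\Pi_V^\perp\|\le 4\delta_V^{-1}$ reduce it to $C_V\|W\|_{L^1+L^{3,\infty}}|\lambda_{V,\Lambda}-\lambda_V|$, and $|\lambda_{V,\Lambda}-\lambda_V|\le|E_{V,\Lambda}-E_V|+\big|\langle u_{\Lambda,\mathrm{gs}},(\bar{\FF}(W_\Lambda)*|u_{\Lambda,\mathrm{gs}}|^2)u_{\Lambda,\mathrm{gs}}\rangle-\langle u_{\mathrm{gs}},(\bar{\FF}(W)*|u_{\mathrm{gs}}|^2)u_{\mathrm{gs}}\rangle\big|$, where $|E_{V,\Lambda}-E_V|\to0$ by Proposition~\ref{prop:conv-GS-en} and the last difference splits, as before, into a part $\le C_V\|W\|_{L^1+L^{3,\infty}}\|u_{\Lambda,\mathrm{gs}}-u_{\mathrm{gs}}\|_{L^2}$ and the part $\int_{\R^3}(W-W_\Lambda)|\FF(|u_{\mathrm{gs}}|^2)|^2$, which vanishes by Lemma~\ref{lm:Leb}\,\eqref{eq:lim1}. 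Collecting the $\Lambda$-vanishing contributions into $\eta(\Lambda)$ gives the displayed bound; combining it with $\|u_{\Lambda,\mathrm{gs}}-u_{\mathrm{gs}}\|_{L^2}\le C\|\varphi_\Lambda-\varphi_{\mathrm{gs}}\|_{\mathcal{Q}_V}$ yields $\|\varphi_\Lambda-\varphi_{\mathrm{gs}}\|_{\mathcal{Q}_V}\le C_VC\varepsilon_V\|\varphi_\Lambda-\varphi_{\mathrm{gs}}\|_{\mathcal{Q}_V}+\eta(\Lambda)$, and choosing $\varepsilon_V$ so that $C_VC\varepsilon_V\le\tfrac12$ gives $\|\varphi_\Lambda-\varphi_{\mathrm{gs}}\|_{\mathcal{Q}_V}\le 2\eta(\Lambda)\to0$, hence $\|u_{\Lambda,\mathrm{gs}}-u_{\mathrm{gs}}\|_{\mathcal{Q}_V}\to0$.

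The main obstacle — and the reason this is not a one-line perturbation of Proposition~\ref{prop:uniqueness Nelson} — is precisely that $W_\Lambda\not\to W$ in $L^1+L^{3,\infty}$, so the kernel change cannot be made small in any norm we control. The decisive bookkeeping point is therefore to arrange every split so that each factor $W-W_\Lambda$ stays attached to the \emph{fixed} minimizer $u_{\mathrm{gs}}$ (where Lemma~\ref{lm:Leb}, resting on dominated convergence against $|\FF(|u_{\mathrm{gs}}|^2)|^2$ and $(\bar{\FF}(W-W_\Lambda)*|u_{\mathrm{gs}}|^2)u_{\mathrm{gs}}$, can be invoked), while every factor $u_{\Lambda,\mathrm{gs}}-u_{\mathrm{gs}}$ carries a coefficient bounded uniformly in $\Lambda$ by $C_V\|W\|_{L^1+L^{3,\infty}}$, so that the self-improving inequality closes for $\varepsilon_V$ small; one must also keep the resolvent parameters $\lambda_V,\lambda_{V,\Lambda}$ uniformly below $\mu_V+\tfrac12\delta_V$, which follows from the elementary upper bound $E_{V,\Lambda}\le\mu_V$ noted above.
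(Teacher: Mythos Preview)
Your proposal is correct and follows essentially the same approach as the paper: the same decomposition $u=\alpha u_V+\varphi$ via Lemma~\ref{lm:phi}, the same three-way telescoping split (resolvent difference, kernel change $W-W_\Lambda$ attached to the fixed minimizer, and source change with $W_\Lambda$), the same use of Lemma~\ref{lm:Leb} and Proposition~\ref{prop:conv-GS-en} for the $\Lambda$-vanishing terms, and the same self-improving inequality closed for small $\varepsilon_V$. The only cosmetic differences are the direction of the telescoping (you change the resolvent first, the paper changes it last) and your use of the elementary bound $E_{V,\Lambda}\le\mu_V$ in place of the paper's appeal to the convergence of $E_{V,\Lambda}$ for the uniform $H^1$ control.
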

\begin{proof}
Existence of a unique minimizer for $J$ follows from Proposition \ref{prop:uniqueness Nelson}. The same holds for $J_\Lambda$ since, for any $\Lambda>0$,
\begin{equation}\label{eq:Wlambda<Wlam}
\|W_\Lambda\|_{L^1+L^{3,\infty}}\le\|W\|_{L^1+L^{3,\infty}}.
\end{equation}
We write $u=u_{\mathrm{gs}}$ and $u_\Lambda=u_{\Lambda,\mathrm{gs}}$ to simplify expressions below. We decompose $u = \alpha u_V + \varphi$, with a coefficient~$\alpha=\langle u,u_V\rangle>0$, $\varphi$ in~$\mathcal{Q}_V$ and $u_V\perp \varphi$ in~$L^2$, and likewise $u_\Lambda = \alpha_\Lambda u_V + \varphi_\Lambda$, with a coefficient~$\alpha_\Lambda=\langle u_\Lambda,u_V\rangle>0$, $\varphi_\Lambda$ in~$\mathcal{Q}_V$ and $u_V\perp \varphi_\Lambda$ in~$L^2$. 

In the same way as in the proof of Proposition \ref{prop:uniqueness Nelson}, using also \eqref{eq:Wlambda<Wlam} and the fact that $E_{V,\Lambda}$ is uniformly bounded in $\Lambda$ (since $E_{V,\Lambda}$ converges as $\Lambda\to\infty$, by Proposition \ref{prop:conv-GS-en}), we have
\begin{align}\label{eq:boundphiLambda}
\|\varphi\|_{L^2}\le C_V\|W\|_{L^1+L^{3,\infty}} ,\quad \|\varphi_\Lambda\|_{L^2}\le C_V\|W\|_{L^1+L^{3,\infty}} .
\end{align}
This yields
\begin{align}
\| u - u_\Lambda\|_{\mathcal{Q}_V}\le C_V|\alpha-\alpha_\Lambda|+\|\varphi-\varphi_\Lambda\|_{\mathcal{Q}_V}. \label{eq:ult1}
\end{align}
The difference $|\alpha-\alpha_\Lambda|$ can be controlled by $\|\varphi-\varphi_\Lambda\|^2_{L^2}$. Indeed, we have the upper bound~$\|\varphi_\sharp\|_{L^2}\le C_V\|W\|_{L^1+L^{3,\infty}}\le\frac12$ for $\|W\|_{L^1+L^{3,\infty}}\le\varepsilon_V$ with $\varepsilon_V$ small enough, where~$\varphi_\sharp$ stands for $\varphi$ or $\varphi_\Lambda$. We can thus estimate in the same way as in \eqref{eq:control_alpha},
\begin{equation*}
\abs{\alpha-\alpha_\Lambda}=\abs{\frac{\norm{\varphi_\Lambda}_{L^2}+\norm{\varphi}_{L^2}}{\left(1-\norm{\varphi}_{L^2}^2\right)^{1/2}+\left(1-\norm{\varphi_\Lambda}_{L^2}^2\right)^{1/2}}}\abs{\norm{\varphi_\Lambda}_{L^2}-\norm{\varphi}_{L^2}}\le C\norm{\varphi-\varphi_\Lambda}_{L^2}.
\end{equation*}
Since $\norm{\varphi-\varphi_\Lambda}_{L^2}\le\norm{\varphi-\varphi_\Lambda}_{\mathcal{Q}_V}$, inserting the previous inequality into \eqref{eq:ult1} gives
\begin{align}
\| u - u_\Lambda\|_{\mathcal{Q}_V}\le C_V\|\varphi-\varphi_\Lambda\|_{\mathcal{Q}_V}. \label{eq:ult1a}
\end{align}

Now we estimate $\|\varphi-\varphi_\Lambda\|_{\mathcal{Q}_V}$. To this end, we use Lemma \ref{lm:phi}, which gives
\begin{align*}
&\varphi=2R_{\lambda_V}\Pi_V^\perp\,(\bar{\FF}(W)*\abs{u}^2)u,\quad \lambda_V:=E_V-\big\langle u,(\bar{\FF}(W)*|u|^2)u\big\rangle, \\
&\varphi_\Lambda=2R_{\lambda_{V,\Lambda}}\Pi_V^\perp\,(\bar{\FF}(W_\Lambda)*\abs{u_\Lambda}^2)u_\Lambda,\quad \lambda_{V,\Lambda}:=E_{V,\Lambda}-\big\langle u_\Lambda,(\bar{\FF}(W_\Lambda)*|u_\Lambda|^2)u_\Lambda\big\rangle.
\end{align*}
By the triangle inequality,
\begin{align}
\|\varphi-\varphi_\Lambda\|_{\mathcal{Q}_V}&\le T_1+T_2+T_3, \label{eq:ult2}
\end{align}
where
\begin{align*}
&T_1:=2\big\|\big(R_{\lambda_V}\Pi_V^\perp-R_{\lambda_{V,\Lambda}}\Pi_V^\perp\big)(\bar{\FF}(W)*\abs{u}^2)u\big\|_{\mathcal{Q}_V} ,\\
&T_2:=2\big\|R_{\lambda_{V,\Lambda}}\Pi_V^\perp(\bar{\FF}(W-W_\Lambda)*\abs{u}^2)u\big\|_{\mathcal{Q}_V} ,\\
&T_3:=2\big\|R_{\lambda_{V,\Lambda}}\Pi_V^\perp\big((\bar{\FF}(W_\Lambda)*\abs{u}^2)u-(\bar{\FF}(W_\Lambda)*\abs{u_\Lambda}^2)u_\Lambda\big)\big\|_{\mathcal{Q}_V}.
\end{align*}
We first estimate the term $T_3$. As in \eqref{eq:lambda<}, we have that $\lambda_{V,\Lambda}\le\mu_V+\frac12\delta_V$, with the distance to the lower eigenvalue~$\delta_V=\mathrm{dist}(\mu_V,\sigma(H_V)\setminus\{\mu_V\})$. Hence $R_{\lambda_{V,\Lambda}}\Pi_V^\perp$ is in~$\LL(\mathcal{Q}_V^*,\mathcal{Q}_V)$ and
\begin{equation}\label{eq:estim_R2_ultraviolet}
\big\|R_{\lambda_{V,\Lambda}}\Pi_V^\perp\big\|_{\LL(\mathcal{Q}_V^*,\mathcal{Q}_V)}\le2\delta_V^{-1}.
\end{equation}
This yields
\begin{align*}
T_3
&\le 4\delta_V^{-1}\norm{(\bar{\FF}(W)*\abs{u}^2)u-(\bar{\FF}(W)*\abs{u_\Lambda}^2)u_\Lambda}_{\mathcal{Q}_V^*}\\
&\le 4\delta_V^{-1}\norm{(\bar{\FF}(W)*\abs{u}^2)u-(\bar{\FF}(W)*\abs{u_\Lambda}^2)u_\Lambda}_{L^2},
\end{align*}
since $L^2\subset \mathcal{Q}_V^*$. We obtain from the triangle inequality that
\begin{align*}
T_3&\le 4\delta_V^{-1}\norm{\big(\bar{\FF}(W)*[(\bar u -\bar u_\Lambda ) u]\big )u}_{L^2}\\
&\quad+ 4\delta_V^{-1}\norm{\big(\bar{\FF}(W)*[\bar u_\Lambda( u - u_\Lambda )]\big )u}_{L^2}\\
&\quad+ 4\delta_V^{-1}\norm{\big(\bar{\FF}(W)*|u_\Lambda|^2\big )(u-u_\Lambda)}_{L^2} ,
\end{align*}
and hence Lemma \ref{lm:convol-ineq} yields
\begin{align}\label{eq:estim_S2}
T_3&\le C_V\|W\|_{L^1+L^{3,\infty}} \big(\|u\|_{{H}^1}^2+\|u_\Lambda\|_{{H}^1}^2\big) \| u - u_\Lambda\|_{L^2}.
\end{align}
Since in addition we have $\|u_\Lambda\|_{\dot{H}^1}^2\le C_V$ (in the same way as in the proof of Proposition \ref{prop:uniqueness Nelson}) uniformly in $\Lambda$, this gives
\begin{align}
T_3 \le C_V\|W\|_{L^1+L^{3,\infty}} \|u-u_\Lambda\|_{\mathcal{Q}_V}. \label{eq:ult3}
\end{align}

Next we estimate $T_1$. It follows from the resolvent equation that
\begin{equation*}
T_1\le2|\lambda_V-\lambda_{V,\Lambda}|\big\|\big(R_{\lambda_V}\Pi_V^\perp R_{\lambda_{V,\Lambda}}\Pi_V^\perp\big)(\bar{\FF}(W)*\abs{u}^2)u\big\|_{\mathcal{Q}_V}.
\end{equation*}
Using \eqref{eq:estim_R2_ultraviolet}, the fact that, likewise, $\big\|R_{\lambda_{V}}\Pi_V^\perp\big\|_{\LL(\mathcal{Q}_V^*,\mathcal{Q}_V^*)}\le2\delta_V^{-1}$ and then Lemma~\ref{lm:convol-ineq}, we obtain
\begin{equation}\label{eq:estim_S1a}
T_1\le C_V\|W\|_{L^1+L^{3,\infty}} \|u\|_{{H}^1}^2 |\lambda_V-\lambda_{V,\Lambda}| \le C_V\|W\|_{L^1+L^{3,\infty}} |\lambda_V-\lambda_{V,\Lambda}| .
\end{equation}
The expressions of $\lambda$, $\lambda_V$ imply
\begin{align*}
|\lambda_V-\lambda_{V,\Lambda}|
&\le|E_V-E_{V,\Lambda}|+\big|\big\langle u,(\bar{\FF}(W)*|u|^2)u\big\rangle-\big\langle u_\Lambda,(\bar{\FF}(W_\Lambda)*|u_\Lambda|^2)u_\Lambda\big\rangle\big|\notag\\ 
&\le|E_V-E_{V,\Lambda}|+\big|\big\langle u,(\bar{\FF}(W-W_\Lambda)*|u|^2)u\big\rangle\big|\\
&\quad+\big|\big\langle u,(\bar{\FF}(W_\Lambda)*|u|^2)u\big\rangle-\big\langle u_\Lambda,(\bar{\FF}(W_\Lambda)*|u_\Lambda|^2)u_\Lambda\big\rangle\big|.
\end{align*}
The last term can be estimated by the same argument we used to bound $T_3$. This gives
\begin{multline}
|\lambda_V-\lambda_{V,\Lambda}|\\
\le|E_V-E_{V,\Lambda}|+C_V\|W\|_{L^1+L^{3,\infty}} \|u-u_\Lambda\|_{\mathcal{Q}_V}+\big|\big\langle u,(\bar{\FF}(W-W_\Lambda)*|u|^2)u\big\rangle\big|. \label{eq:ult5}
\end{multline}

To estimate the term $T_2$, we write
\begin{align}
T_2&\le C_V \big\|(\bar{\FF}(W-W_\Lambda)*\abs{u}^2)u\big\|_{\mathcal{Q}_V^*} ,\notag\\
&\le C_V \big\|(\bar{\FF}(W-W_\Lambda)*\abs{u}^2)u\big\|_{L^2} ,\label{eq:ult6}
\end{align}
since $L^2\subset \mathcal{Q}_V^*$.

Putting together \eqref{eq:ult1a}, \eqref{eq:ult2}, \eqref{eq:ult3}, \eqref{eq:estim_S1a}, \eqref{eq:ult5} and \eqref{eq:ult6}, we deduce that
\begin{align*}
\big(1-C_V\|W\|_{L^1+L^{3,\infty}}\big)\| u - u_\Lambda\|_{\mathcal{Q}_V}\le C_V\big( &|E_V-E_{V,\Lambda}|+\big|\big\langle u,(\bar{\FF}(W-W_\Lambda)*|u|^2)u\big\rangle\big| \\
&+\big\|(\bar{\FF}(W-W_\Lambda)*\abs{u}^2)u\big\|_{L^2}\big).
\end{align*}
For $\|W\|_{L^1+L^{3,\infty}}\le\varepsilon_V$ with $\varepsilon_V$ small enough, Lemma \ref{lm:Leb} together with Proposition \ref{prop:conv-GS-en} then imply that $\|u-u_\Lambda\|_{\mathcal{Q}_V}\to0$ as $\Lambda\to\infty$.
\end{proof}
%
%


\appendix

\section{Operators in Fock space, self-adjointness}\label{app:Fock}

\subsection{Operators in Fock space}

We recall in this section a few well-known properties of basic operators in Fock space. We do not specify their domains. For more details the reader may consult e.g.~\cite{Berezin66,BratteliRobinson297,ReedSimonII}. Recall that the symmetric Fock space $\F(\mathfrak{h})$ over the one-particle space~$\mathfrak{h}= L^2(\mathbb{R}^3)$ has been defined in \eqref{eq:FockSpace}. For $h$ in~$\mathfrak{h},$ the creation and annihilation operators~$a^*(h)$ and~$a(h)$  are defined as follows:
\begin{align*}
a^*(h)_{|\bigvee^n\mathfrak{h}}&=\sqrt{(n+1)}\;|h\rangle\bigvee\I_{\bigvee^n\mathfrak{h}}, \quad n \ge 0, \\
a(h)_{|\bigvee^n\mathfrak{h}}&=\sqrt{n}\;\langle h|\otimes\I_{\bigvee^{n-1}\mathfrak{h}} , \quad n > 0 , \qquad a(h)_{|\mathbb{C}}=0.
\end{align*}
Formally, we also have
\begin{equation*}
a(h)=\int_{\mathbb{R}^3} \overline{h(k)}a(k)\diff k,\quad a^*(h)=\int_{\mathbb{R}^3} h(k)a^*(k)\diff k,
\end{equation*} 
where $a(k)$ and $a^*(k)$ are operator-valued distributions which satisfy the well-known canonical commutations relations
\begin{align*}
[a(k),a(k')]&=[a^*(k),a^*(k')] = 0,\quad 
[a(k),a^*(k')]=\delta(k-k').
\end{align*}
The field operator $\Phi(h)$ is defined by
\begin{equation*}
\Phi(h)=(a(h)+a^*(h))/\sqrt{2}.
\end{equation*}
Let $\omega$ be a self-adjoint operator on $\mathfrak{h}.$ The second quantization of $\omega$ is defined by 
\begin{equation*}
\mathrm{d}\Gamma(\omega)_{|\bigvee^n\mathfrak{h}}=\sum_{k=1}^n\I_{\bigvee^{k-1}\mathfrak{h}}\otimes \omega\otimes \I_{\bigvee^{n-k}\mathfrak{h}}.
\end{equation*} 
Note that this operator can be expressed in terms of creation and annihilation operators :
\begin{equation*}
\mathrm{d}\Gamma(\omega)=\int_{\mathbb{R}^3} \omega(k) a^*(k)a(k)\diff k.
\end{equation*}
The coherent state of parameter $f$ in~$\mathfrak{h}$ is the vector in Fock space defined as
\begin{equation*}
\Psi_f:=e^{i\Phi\left(\frac{\sqrt{2}}{i}f\right)}\Omega=e^{-\frac{\norm{f}^2_{\mathfrak{h}}}{2}}\sum_{n=0}^\infty\frac{f^{\otimes n}}{\sqrt{n!}},
\end{equation*}
where $\Omega$ stands for the Fock vacuum. Coherent states are eigenvectors of the annihilation operator in the sense that, for all $f,h$ in~$\mathfrak{h},$  we have 
\begin{equation*}
a(h)\Psi_f=\ps{h}{f}_{\mathfrak{h}}\Psi_f.
\end{equation*}
This identity implies the following relations:
\begin{align*}
\ps{\Psi_f}{\Phi(h)\Psi_f}_{\F(\mathfrak{h})}&=2\Real\ps{h}{f}_\mathfrak{h}, \qquad \ps{\Psi_f}{\mathrm{d}\Gamma(\omega)\Psi_f}_{\F(\mathfrak{h})}=\ps{f}{\omega\, f}_\mathfrak{h}.
\end{align*}
These equalities were used to compute the expressions \eqref{eq:cE}--\eqref{eq:cE2} of the Pauli-Fierz energy of product stated of the form $u\otimes\Psi_f$.

\subsection{Self-adjointness of the Pauli-Fierz Hamiltonian} In the next proposition, we recall the self-adjointness property of the Pauli-Fierz Hamiltonian $\mathbb{H}$ defined in \eqref{eq:general_PF}. Recall also that the free Hamiltonian $\mathbb{H}_{\mathrm{free}}$ has been defined in \eqref{eq:Hfree}.
\begin{proposition}
Suppose $V$ satisfies Hypothesis \ref{condV} and that $W=g^2\omega^{-1}v^2$ is in~$L^1(\mathbb{R}^3)$. Then $\mathbb{H}$ is a self-adjoint, semi-bounded operator with form domain $\mathcal{Q}(\mathbb{H})=\mathcal{Q}(\mathbb{H}_{\mathrm{free}})$ for all~$g$ in~$\mathbb{R}$.
\end{proposition}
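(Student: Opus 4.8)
The plan is to construct $\mathbb{H}$ by the KLMN theorem, taking $\mathbb{H}_{\mathrm{free}}$ as the semi-bounded self-adjoint unperturbed operator and $g\sqrt2\,\Phi(h_x)$ as an infinitesimally small relative form perturbation. The only information available on the form factor is that $W=g^2\omega^{-1}v^2\in L^1(\mathbb{R}^3)$, which gives $\omega^{-1/2}v\in L^2(\mathbb{R}^3)$ with $|g|\,\|\omega^{-1/2}v\|_{L^2}=\|W\|_{L^1}^{1/2}$; in particular $v$ itself need not be square integrable, since no ultraviolet cutoff is imposed beyond $W\in L^1$, so the argument has to be carried out entirely at the level of quadratic forms, using only the $\omega^{-1/2}$-weighted norm of $h_x$.

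First I would record the standard Fock-space bound: for any $h$ with $\omega^{-1/2}h\in\mathfrak{h}$ and any $\Psi$ in the form domain of $\mathrm{d}\Gamma(\omega)$, one has $\|a(h)\Psi\|_{\mathcal{H}_{\mathrm{f}}}\le\|\omega^{-1/2}h\|_{\mathfrak{h}}\,\langle\Psi,\mathrm{d}\Gamma(\omega)\Psi\rangle^{1/2}$. Accordingly the interaction is understood as the real-valued quadratic form $\Psi\mapsto g\sqrt2\,\langle\Psi,\Phi(h_x)\Psi\rangle=2g\,\Real\langle\Psi,a(h_x)\Psi\rangle$, which is well defined on $\mathcal{Q}(\mathbf{I}_{\mathrm{el}}\otimes\mathbb{H}_{\mathrm{f}})\supseteq\mathcal{Q}(\mathbb{H}_{\mathrm{free}})$ even though $a^*(h_x)\Psi$ need not belong to $\mathcal{H}$. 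Transferring the Fock bound to $\mathcal{H}=L^2(\mathbb{R}^3;\mathcal{H}_{\mathrm{f}})$ by the direct integral decomposition in the electron variable, and using that $\|\omega^{-1/2}h_x\|_{\mathfrak{h}}=\|\omega^{-1/2}v\|_{\mathfrak{h}}$ is independent of $x$ together with the Cauchy--Schwarz inequality in $x$, one gets, for all $\Psi\in\mathcal{Q}(\mathbb{H}_{\mathrm{free}})$,
\[
\big|2g\,\Real\langle\Psi,a(h_x)\Psi\rangle\big|\le 2\,\|W\|_{L^1}^{1/2}\,\big\langle\Psi,(\mathbf{I}_{\mathrm{el}}\otimes\mathbb{H}_{\mathrm{f}})\Psi\big\rangle^{1/2}\,\|\Psi\|_{\mathcal{H}}.
\]
Since $H_V\ge\mu_V$ forces $\mathbf{I}_{\mathrm{el}}\otimes\mathbb{H}_{\mathrm{f}}\le\mathbb{H}_{\mathrm{free}}-\mu_V$ as forms on $\mathcal{Q}(\mathbb{H}_{\mathrm{free}})$, the elementary inequality $st\le\varepsilon s^2+(4\varepsilon)^{-1}t^2$ then yields, for every $\varepsilon>0$,
\[
\big|2g\,\Real\langle\Psi,a(h_x)\Psi\rangle\big|\le\varepsilon\,\big\langle\Psi,(\mathbb{H}_{\mathrm{free}}-\mu_V)\Psi\big\rangle+C_\varepsilon\|W\|_{L^1}\|\Psi\|_{\mathcal{H}}^2,
\]
so the interaction form is infinitesimally form-bounded relative to $\mathbb{H}_{\mathrm{free}}$. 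The KLMN theorem \cite[Theorem~X.17]{ReedSimonII} then produces a unique semi-bounded self-adjoint operator $\mathbb{H}$ with $\mathcal{Q}(\mathbb{H})=\mathcal{Q}(\mathbb{H}_{\mathrm{free}})$, and since the relative bound is infinitesimal this holds for every $g\in\mathbb{R}$.

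The only genuinely delicate point is the one already flagged: because $v$ need not be in $L^2$, one cannot reduce matters to an operator bound of the type $\|\Phi(h_x)\Psi\|\lesssim\|(\mathrm{d}\Gamma(\omega)+\mathbf{I}_{\mathrm{f}})^{1/2}\Psi\|$ (which would in addition require $\omega$ bounded away from $0$ and hence fail for massless fields), and $a^*(h_x)\Psi$ may fail to be a vector in $\mathcal{H}$; the point is that only the symmetrized quantity $2g\,\Real\langle\Psi,a(h_x)\Psi\rangle$ enters, and it is controlled solely through $\|a(h_x)\Psi\|\le\|\omega^{-1/2}v\|_{\mathfrak{h}}\,\langle\Psi,\mathrm{d}\Gamma(\omega)\Psi\rangle^{1/2}$. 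Everything else — the fibrewise lift in $x$, the comparison $\mathbf{I}_{\mathrm{el}}\otimes\mathbb{H}_{\mathrm{f}}\le\mathbb{H}_{\mathrm{free}}-\mu_V$, and the verification that the displayed estimate makes the interaction a symmetric quadratic form on $\mathcal{Q}(\mathbb{H}_{\mathrm{free}})$ — is routine.
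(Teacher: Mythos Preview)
Your proof is correct, and in one respect it is more careful than the paper's own argument. Both proofs arrive at the same endpoint (relative form boundedness of the interaction, then KLMN), but the routes differ.

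The paper proceeds via an \emph{operator} bound: it invokes the standard $N_\tau$-estimates $\|a(h_x)\psi\|\le\|\omega^{-1/2}h_x\|\,\|\mathrm{d}\Gamma(\omega)^{1/2}\psi\|$ and $\|a^*(h_x)\psi\|\le\|\omega^{-1/2}h_x\|\,\|\mathrm{d}\Gamma(\omega)^{1/2}\psi\|+\|h_x\|\,\|\psi\|$, then uses $\|\mathrm{d}\Gamma(\omega)^{1/2}\psi\|^2\le\tfrac{\varepsilon^2}{2}\|\mathrm{d}\Gamma(\omega)\psi\|^2+\tfrac{1}{2\varepsilon^2}\|\psi\|^2$ to conclude $\|g\Phi(h_x)\psi\|\le a\|\mathbb{H}_{\mathrm{free}}\psi\|+b\|\psi\|$ with $a<1$, and only then passes to forms and applies KLMN. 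This route tacitly uses $\|h_x\|_{\mathfrak h}=\|v\|_{L^2}<\infty$, which is \emph{not} a consequence of $W=g^2\omega^{-1}v^2\in L^1$ alone (it holds in all of the paper's concrete models where $W\in L^1$, but not abstractly when $\omega$ is unbounded).

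You instead stay entirely at the form level, writing the interaction as $2g\,\Real\langle\Psi,a(h_x)\Psi\rangle$ and bounding it via Cauchy--Schwarz and the annihilation estimate only, which needs nothing beyond $\|\omega^{-1/2}v\|_{L^2}^2=|g|^{-2}\|W\|_{L^1}$. Combined with $\mathbf{I}_{\mathrm{el}}\otimes\mathbb{H}_{\mathrm f}\le\mathbb{H}_{\mathrm{free}}-\mu_V$ this gives infinitesimal form boundedness directly. So your argument is the one that matches the stated hypothesis exactly; the paper's argument is shorter but implicitly assumes the extra integrability $v\in L^2$ needed for the $a^*$ operator bound.
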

\begin{proof}
Using the well-known $N_\tau$-estimates for the creation and annihilation operators, we have
\begin{align*}
\norm{a(h_x)\psi}_{\F(\mathfrak{h})}&\leq\norm{\omega^{-1/2}h_x}_\mathfrak{h}\norm{\mathrm{d}\Gamma(\omega)^{1/2}\psi}_{\F(\mathfrak{h})},\\
\norm{a^*(h_x)\psi}_{\F(\mathfrak{h})}&\leq\norm{\omega^{-1/2}h_x}_\mathfrak{h}\norm{\mathrm{d}\Gamma(\omega)^{1/2}\psi}_{\F(\mathfrak{h})}+\norm{h_x}_\mathfrak{h}\norm{\psi}_{\F(\mathfrak{h})}.
\end{align*}
Note that the quantity $\norm{\omega^{-1/2}h_x}_\mathfrak{h}$ is well-defined since $W$ is in~$L^1$. Now, by the Cauchy-Schwarz inequality, we have
\begin{align*}
\norm{\mathrm{d}\Gamma(\omega)^{1/2}\psi}_{\F(\mathfrak{h})}^2&=\ps{\psi}{\mathrm{d}\Gamma(\omega)\psi}_{\F(\mathfrak{h})}\\
&\leq \norm{\psi}_{\F(\mathfrak{h})}\norm{\mathrm{d}\Gamma(\omega)\psi}_{\F(\mathfrak{h})} \leq \frac{1}{2\varepsilon^2}\norm{\psi}^2_{\F(\mathfrak{h})}+\frac{\varepsilon^2}{2}\norm{\mathrm{d}\Gamma(\omega)\psi}_{\F(\mathfrak{h})}^2.
\end{align*}
Taking $\varepsilon>0$ small enough allows us to conclude that there exists $a<1$ and $b$ in~$\R$ such that, for all $\psi$ in~$\F(\mathfrak{h}),$ the following inequality holds 
\begin{equation*}
\norm{g\Phi(h_x)\psi}_{\F(\mathfrak{h})}\leq a\norm{\mathbb{H}_{\mathrm{free}}\psi}_{\F(\mathfrak{h})}+b\norm{\psi}_{\F(\mathfrak{h})}
\end{equation*}
Therefore $g\Phi(h_x)$ is relatively bounded (and hence also relatively form bounded) with respect to $\mathbb{H}_{\mathrm{free}}$ with relative bound less than $1$. Applying the KLMN theorem  (see e.g.~\cite[Theorem X.17]{ReedSimonII}) then yields the result.
\end{proof}

As mentioned in the introduction, the condition $W\in L^1$ is not satisfied by the polaron model. It is nevertheless proven in \cite{GriesemerWunsch16}, by other means, that the polaron Hamiltonian $\mathbb{H}$ also identifies with a semi-bounded self-adjoint operator with form domain $\mathcal{Q}(\mathbb{H})=\mathcal{Q}(\mathbb{H}_{\mathrm{free}})$.

\section{Existence of a minimizer for the Hartree equation}\label{app:Hartree}

In this section, we prove the existence of a minimizer for the Hartree energy functional as stated in Proposition \ref{prop:min_Hartree}. We write $w=\bar{\FF}(W)=g^2\bar{\FF}(\omega^{-1}v^2)$ (where, recall, $\omega$ is the field dispersion relation, $v$ is the coupling function and $g$ is the coupling parameter) and display the dependence of the Hartree energy functional \eqref{eq:Hartree_en} on the external potential $V$. In other words, we study in this section the energy functional
\begin{align*}
J_V(u) :=  \langle u,H_{V}u\rangle-\int_{\mathbb{R}^3} \big(w*|u|^2\big)(x) |u(x)|^2 \diff x, \quad u \in \mathcal{U},
\end{align*}
where $V,w:\mathbb{R}^3\to\mathbb{R}$ are real potentials, $H_{V} = -\Delta + V$, and
\begin{equation*}
\mathcal{U} = \big\{ u \in \mathcal{Q}_{V}\mid \|u\|_{L^2}=1\big\},
\end{equation*}
with $\mathcal{Q}_{V}\subset L^2(\mathbb{R}^3)$ the form domain of $H_V$.
%
%

We begin with a lemma showing that $J_V$ is well-defined and semi-bounded from below under our assumptions.
\begin{lemma}\label{lm:JV}
Assume that $V$ satisfies Hypothesis \ref{condV} and that $W$ satisfies Hypothesis \ref{condv0:2a}. Then~$J_V(u)$ is well-defined for all $ u $ in~$ \mathcal{U}$. Moreover, if the decomposition $W=W_1+W_2$ in Hypothesis \ref{condv0:2a} can be chosen such that
\begin{equation*}
 \|W_2\|_{L^{3,\infty}} < C(1-a)
\end{equation*}
for some universal constant $C$, where $a$ is as in Hypothesis \ref{condV}, then
\begin{align}\label{eq:exist_EV}
E_V := \inf_{u\in \mathcal{U}} J_V(u) > - \infty.
\end{align}
\end{lemma}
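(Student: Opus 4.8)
The plan is to prove the two assertions in turn. For well-definedness, I first note that $\langle u, H_V u\rangle$ is finite for $u\in\mathcal{U}\subset\mathcal{Q}_V$ by the discussion preceding Lemma \ref{lm:relative-bound} (KLMN gives that $H_V$ is semi-bounded on $\mathcal{Q}_V$). For the convolution term, write $w=\bar{\FF}(W)$ with $W=W_1+W_2$, $W_1\in L^1$, $W_2\in L^{3,\infty}$, and split $\int (w*|u|^2)(x)|u(x)|^2\diff x = \langle u, (\bar{\FF}(W_1)*|u|^2)u\rangle + \langle u,(\bar{\FF}(W_2)*|u|^2)u\rangle$. By Cauchy--Schwarz this is bounded in absolute value by $\|(\bar{\FF}(W_1)*|u|^2)u\|_{L^2} + \|(\bar{\FF}(W_2)*|u|^2)u\|_{L^2}$ times $\|u\|_{L^2}=1$, and both terms are finite by Lemma \ref{lm:convol-ineq}: the $W_1$ piece is controlled by \eqref{eq:convol-ineq1} (applied with $u_1=|u|^2$-type bookkeeping, or directly via $\|\bar{\FF}(W_1)*(|u|^2)\|_{L^\infty}\lesssim \|W_1\|_{L^1}\|u\|_{L^2}^2$ and then multiplied by $u\in L^2$), and the $W_2$ piece by \eqref{eq:convol-ineq2_0}, which gives $\|(\bar{\FF}(W_2)*(u\bar u))u\|_{L^2}\lesssim \|W_2\|_{L^{3,\infty}}\|u\|_{L^2}\|u\|_{\dot H^1}^2$. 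Since $u\in\mathcal{Q}_V\subset H^1$, everything is finite, so $J_V(u)$ is well-defined.

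For the lower bound \eqref{eq:exist_EV}, the idea is to absorb the convolution term into the kinetic energy. Using the estimates above,
\[
\Big| \int_{\mathbb{R}^3}(w*|u|^2)(x)|u(x)|^2\diff x \Big| \le C_1 \|W_1\|_{L^1}\|u\|_{L^2}^4 + C_2\|W_2\|_{L^{3,\infty}} \|u\|_{\dot H^1}^2\|u\|_{L^2}^2,
\]
for universal constants $C_1,C_2$. On $\mathcal{U}$ we have $\|u\|_{L^2}=1$, so this is $\le C_1\|W_1\|_{L^1} + C_2\|W_2\|_{L^{3,\infty}}\|u\|_{\dot H^1}^2$. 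Next, by Lemma \ref{lm:relative-bound}, $\|u\|_{\dot H^1}^2\le (1-a)^{-1}(\langle u,H_Vu\rangle + b)$. Therefore
\[
J_V(u)\ge \langle u,H_Vu\rangle - C_1\|W_1\|_{L^1} - \frac{C_2\|W_2\|_{L^{3,\infty}}}{1-a}\big(\langle u,H_Vu\rangle+b\big).
\]
If $\|W_2\|_{L^{3,\infty}}<C(1-a)$ with $C:=1/(2C_2)$ (a universal constant, taking into account that $\langle u,H_Vu\rangle$ is bounded below but not necessarily nonnegative), then the coefficient $1-C_2\|W_2\|_{L^{3,\infty}}/(1-a)$ of $\langle u,H_Vu\rangle$ is positive; using $\langle u,H_Vu\rangle\ge\mu_V$ to bound the leftover term from below, we obtain $J_V(u)\ge \tfrac12\mu_V - C_1\|W_1\|_{L^1} - \tfrac12 \cdot \tfrac{b}{1} > -\infty$, uniformly in $u\in\mathcal{U}$, which gives \eqref{eq:exist_EV}.

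The only mildly delicate point is the bookkeeping constant $C$: one must track that the constant appearing in the smallness hypothesis $\|W_2\|_{L^{3,\infty}}<C(1-a)$ is indeed universal, i.e.\ that $C_2$ from Lemma \ref{lm:convol-ineq}(iii) does not depend on $V$ or $W$ — which it does not, since \eqref{eq:convol-ineq2_0} is a pure Lorentz-space inequality on $\mathbb{R}^3$. Everything else is a routine combination of Lemma \ref{lm:convol-ineq}, Lemma \ref{lm:relative-bound}, and the bound $\langle u,H_Vu\rangle\ge\mu_V$ on $\mathcal{U}$; I do not expect any real obstacle. (I would also remark that the sharper smallness condition \eqref{eq:smallness_Hartree} of Proposition \ref{prop:min_Hartree} is what is eventually needed for existence of a minimizer, but for mere semi-boundedness the cruder condition stated in the lemma suffices.)
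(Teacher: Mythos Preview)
Your proposal is correct and follows essentially the same route as the paper: bound the convolution term by $\|W_1\|_{L^1}+C\|W_2\|_{L^{3,\infty}}\|u\|_{\dot H^1}^2$ via Lemma~\ref{lm:convol-ineq}, convert $\|u\|_{\dot H^1}^2$ into $\langle u,H_Vu\rangle$ using Lemma~\ref{lm:relative-bound}, and conclude from $\langle u,H_Vu\rangle\ge\mu_V$ once the coefficient in front is positive. The only cosmetic difference is that you invoke part~(iii) of Lemma~\ref{lm:convol-ineq} for the $W_2$ piece whereas the paper uses part~(ii), but both yield the same quadratic bound in $\|u\|_{\dot H^1}$.
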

\begin{proof}
Combining \eqref{eq:convol-ineq1}--\eqref{eq:convol-ineq2} from Lemma \ref{lm:convol-ineq} and \eqref{eq:estimH1} from Lemma \ref{lm:relative-bound}, we deduce that, for $u$ in~$\mathcal{U}$,
\begin{equation}\label{eq:JV<}
J_V(u)\le \Big(1+\frac{C\|W_2\|_{L^{3,\infty}}}{1-a}\Big)\langle u,H_Vu\rangle+\Big(\|W_1\|_{L^1}+\frac{ b C\|W_2\|_{L^{3,\infty}}}{1-a}\Big),
\end{equation}
for some universal constant $C$, and
\begin{equation}\label{eq:JV>}
J_V(u) \ge \Big(1-\frac{C\|W_2\|_{L^{3,\infty}}}{1-a}\Big)\langle u,H_Vu\rangle-\Big(\|W_1\|_{L^1}+\frac{ b C\|W_2\|_{L^{3,\infty}}}{1-a}\Big).
\end{equation}
Therefore, assuming $C\|W_2\|_{L^{3,\infty}}<1-a$, we deduce that \eqref{eq:exist_EV} holds.
\end{proof}

Next we prove Proposition \ref{prop:min_Hartree}. Recall that $\mu_V = \inf \sigma( H_V )$. Below, if $\tilde V$ is another potential, we denote by $J_{\tilde V}$, $E_{\tilde V}$, $\mu_{\tilde V}$ the quantities obtained from $J_V$, $E_V$, $\mu_V$ by replacing $V$ by $\tilde V$. 

\begin{proof}[Proof of Proposition \ref{prop:min_Hartree}]
Let $(u_j)_{j\in\mathbb{N}}\subset \mathcal{U}$ be a minimizing sequence for $J_V$, i.e.
\begin{equation*}
J_V(u_j) \underset{j\to+\infty}{\longrightarrow} E_V.
\end{equation*}
The strategy consists in showing that $(u_j)_{j\in\mathbb{N}}$ converges strongly in $L^2(\mathbb{R}^3)$, along some subsequence, to a state $u_\infty$ in~$\mathcal{Q}_V$ which is then a minimizer for $H_V$.
We divide the proof into several steps.

\medskip

\noindent \textbf{Step 1}: We prove that 
\begin{equation}\label{eq:step1}
E_V<E_{V_1}.
\end{equation}
Let $\varepsilon>0$. Let $u_\varepsilon$ in~$\mathcal{U}$ be such that $\langle u_\varepsilon,H_Vu_\varepsilon\rangle\le \mu_V+\varepsilon$. Applying \eqref{eq:JV<}, we obtain that
\begin{align*}
E_V&\le J_V(u_\varepsilon) \le \Big(1+\frac{C\|W_2\|_{L^{3,\infty}}}{1-a}\Big)\big(\mu_V+\varepsilon\big)+\|W_1\|_{L^1}+\frac{ b C\|W_2\|_{L^{3,\infty}}}{1-a}.
\end{align*}
Letting $\varepsilon\to0$, this yields
\begin{equation}\label{eq:r1}
E_V\le \Big(1+\frac{C\|W_2\|_{L^{3,\infty}}}{1-a}\Big)\mu_V+\|W_1\|_{L^1}+\frac{b C\|W_2\|_{L^{3,\infty}}}{1-a}.
\end{equation}
On the other hand, for all $u$ in~$\mathcal{U}$, we can write, using \eqref{eq:convol-ineq1}, \eqref{eq:convol-ineq2} and the fact that $V_1\ge0$,
\begin{align*}
J_{V_1}(u) &\ge \langle u,H_{V_1}u\rangle - \|W_1\|_{L^1} - C\|W_2\|_{L^{3,\infty}}\|u\|_{\dot{H}_1}^2 \\
&\ge \big(1-C\|W_2\|_{L^{3,\infty}}\big) \mu_{V_1}-\|W_1\|_{L^1},
\end{align*}
and hence
\begin{equation}\label{eq:r2}
E_{V_1}\ge \big(1-C\|W_2\|_{L^{3,\infty}}\big) \mu_{V_1}-\|W_1\|_{L^1}.
\end{equation}
Combining \eqref{eq:r1} and \eqref{eq:r2} gives
\begin{align}
E_{V_1}-E_V &\ge \big(1-C\|W_2\|_{L^{3,\infty}}\big) \mu_{V_1}-\Big(1+\frac{C\|W_2\|_{L^{3,\infty}}}{1-a}\Big)\mu_V - 2\|W_1\|_{L^1} - \frac{ b C\|W_2\|_{L^{3,\infty}}}{1-a} \notag \\
&= \big(1-C\|W_2\|_{L^{3,\infty}}\big) (\mu_{V_1}- \mu_V) - 2\|W_1\|_{L^1}-C_V\|W_2\|_{L^{3,\infty}} , \label{eq:e1}
\end{align}
where we have set
\begin{align*}
C_V := C \frac{(2-a)\mu_V+b}{1-a} .
\end{align*}
The right-hand-side of \eqref{eq:e1} is strictly positive since we assumed $C\|W_2\|_{L^{3,\infty}}\le\frac12(1-a)\le\frac12$ and provided that
\begin{equation*}
2\|W_1\|_{L^1}+C_V\|W_2\|_{L^{3,\infty}}<\frac14(\mu_{V_1}-\mu_V).
\end{equation*}

\medskip

\noindent \textbf{Step 2}:  We prove that for all $u$ in~$\mathcal{U}$,
\begin{equation}
J_V(u)\ge E_V + \big(E_{V_1}-E_V-4\|W_1\|_{L^1}-C\|W_2\|_{L^{3,\infty}}\|u\|_{H^1}^2 \big)\|\tilde{\eta}_Ru\|^2_{L^2}+o(R^0), \label{eq:a0}
\end{equation}
as $R\to\infty$, for some universal constant $C>0$.

Recall that the localizations functions $\eta_R$, $\tilde{\eta}_R$ have been defined in \eqref{eq:defetaR}. Writing the decomposition $u=\eta_R^2u+\tilde{\eta}_R^2u$ and commuting $\eta_R$, $\tilde{\eta}_R$ through $-\Delta$, we have by the IMS localization formula (see e.g.~\cite{CFKS87})
\begin{align}\label{eq:h1}
\langle u,H_{V}u\rangle = \langle \eta_Ru,H_{V}\eta_Ru\rangle+\langle \tilde{\eta}_Ru,H_{V}\tilde{\eta}_Ru\rangle - \frac12\big\langle u,\big(\big|\nabla\eta_R\big|^2+\big|\nabla\tilde{\eta}_R\big|^2\big)u\big\rangle.
\end{align}
The definitions of $\eta_R$, $\tilde{\eta}_R$ yield $\||\nabla\eta_R|^2\|_{L^\infty}=\mathcal{O}(R^{-2})$, $\||\nabla\tilde{\eta}_R|^2\|_{L^\infty}=\mathcal{O}(R^{-2})$. Moreover, since $V_2(x)\to0$ as $|x|\to\infty$, 
\begin{equation*}
\langle \tilde{\eta}_Ru,H_{V}\tilde{\eta}_Ru\rangle=\langle \tilde{\eta}_Ru,H_{V_1}\tilde{\eta}_Ru\rangle+o(R^0)\|\tilde{\eta}_Ru\|^2.
\end{equation*}
Inserting this into \eqref{eq:h1}, we get
\begin{align}\label{eq:h2}
\langle u,H_{V}u\rangle = \langle \eta_Ru,H_{V}\eta_Ru\rangle+\langle \tilde{\eta}_Ru,H_{V_1}\tilde{\eta}_Ru\rangle +o(R^0).
\end{align}

Next we consider the convolution term in $J_V(u)$. We can write
\begin{multline}
\int_{\mathbb{R}^3} \big(w*|u|^2\big)(x) |u(x)|^2 \diff x\\
=\int_{\mathbb{R}^3} \big(w*|\eta_Ru|^2\big)(x) |(\eta_Ru)(x)|^2 \diff x+\int_{\mathbb{R}^3} \big(w*|\tilde{\eta}_Ru|^2\big)(x) |(\tilde{\eta}_Ru)(x)|^2 \diff x \\
\quad+\int_{\mathbb{R}^3} \big(w*|\eta_Ru|^2\big)(x) |(\tilde{\eta}_Ru)(x)|^2 \diff x+\int_{\mathbb{R}^3} \big(w*|\tilde{\eta}_Ru|^2\big)(x) |(\eta_Ru)(x)|^2 \diff x. \label{eq:k1}
\end{multline}
We estimate the last two terms in the right-hand-side of the previous equation. Since $W_1$ is in~$L^1(\mathbb{R}^3)$, using \eqref{eq:convol-ineq1} yields
\begin{multline}
\Big |\int_{\mathbb{R}^3} \big(\bar\FF(W_1)*|\eta_Ru|^2\big)(x) |(\tilde{\eta}_Ru)(x)|^2 \diff x+\int_{\mathbb{R}^3} \big(\bar\FF(W_1)*|\tilde{\eta}_Ru|^2\big)(x) |(\eta_Ru)(x)|^2 \diff x\Big |\\
\le 2\|W_1\|_{L^1}\|\eta_Ru\|_{L^2}^2\|\tilde{\eta}_Ru\|_{L^2}^2\le2\|W_1\|_{L^1}\|\tilde{\eta}_Ru\|_{L^2}^2.\label{eq:k2}
\end{multline}
Note that in the last inequality we used that $\|\eta_Ru\|_{L^2}\le\|u\|_{L^2}=1$. Next, since $W_2$ is in~$L^{3,\infty}(\mathbb{R}^3)$, using \eqref{eq:convol-ineq2} yields
\begin{multline}
\Big |\int_{\mathbb{R}^3} \big(\bar\FF(W_2)*|\eta_Ru|^2\big)(x) |(\tilde{\eta}_Ru)(x)|^2 \diff x+\int_{\mathbb{R}^3} \big(\bar\FF(W_2)*|\tilde{\eta}_Ru|^2\big)(x) |(\eta_Ru)(x)|^2 \diff x\Big |\\
\le C \|W_2\|_{L^{3,\infty}}\|\eta_Ru\|_{H_1}^2\|\tilde{\eta}_Ru\|_{L^2}^2\le C\|W_2\|_{L^{3,\infty}}\|u\|_{H^1}^2\|\tilde{\eta}_Ru\|_{L^2}^2+o(R^0),\label{eq:k3}
\end{multline}
where in the last inequality we used in addition that $\|\eta_Ru\|_{H_1}\le\|u\|_{H^1}+o(R^0)$. Putting together \eqref{eq:h2}, \eqref{eq:k1}, \eqref{eq:k2} and \eqref{eq:k3}, we arrive at
\begin{align}
J_V(u)\ge J_V(\eta_Ru)+J_{V_1}(\tilde{\eta}_Ru) - \big(2\|W_1\|_{L^1}+C\|W_2\|_{L^{3,\infty}}\|u\|_{H^1}^2\big)\|\tilde{\eta}_Ru\|_{L^2}^2+o(R^0). \label{eq:a1}
\end{align}
Now, suppose that $\|\eta_Ru\|_{L^2}\neq0$. Then we can write
\begin{align*}
J_V(\eta_Ru)&=\Big\langle \frac{\eta_Ru}{\|\eta_Ru\|_{L^2}},H_V\frac{\eta_Ru}{\|\eta_Ru\|_{L^2}}\Big\rangle \, \|\eta_Ru\|_{L^2}^2\\
&\quad-\int_{\mathbb{R}^3} \Big(w*\Big|\frac{\eta_Ru}{\|\eta_Ru\|_{L^2}}\Big|^2\Big)(x) \, \Big|\frac{\eta_Ru}{\|\eta_Ru\|_{L^2}}(x)\Big|^2 \diff x\,\|\eta_Ru\|_{L^2}^4 \notag \\
&=J_V\Big(\frac{\eta_Ru}{\|\eta_Ru\|_{L^2}}\Big) \|\eta_Ru\|_{L^2}^2\\
&\quad+\int_{\mathbb{R}^3} \Big(w*\Big|\frac{\eta_Ru}{\|\eta_Ru\|_{L^2}}\Big|^2\Big)(x) \, \Big|\frac{\eta_Ru}{\|\eta_Ru\|_{L^2}}(x)\Big|^2 \diff x \,\|\eta_Ru\|_{L^2}^2 \, \|\tilde{\eta}_Ru\|_{L^2}^2 .
\end{align*}
By definition of $E_V$, we have that $J_V(\eta_Ru/\|\eta_Ru\|_{L^2})\ge E_V$. Estimating the integrated term as above, using \eqref{eq:convol-ineq1}, \eqref{eq:convol-ineq2} and $\|\eta_Ru\|_{H_1}\le\|u\|_{H^1}+o(R^0)$, we then deduce that
\begin{align}
&J_V(\eta_Ru) \ge E_V\|\eta_Ru\|_{L^2}^2-\big(\|W_1\|_{L^1}+C\|W_2\|_{L^{3,\infty}}\|u\|_{H^1}^2\big)\|\tilde{\eta}_Ru\|_{L^2}^2+o(R^0). \label{eq:a2}
\end{align}
Similar arguments show that, if $\|\tilde{\eta}_Ru\|\neq0$, then
\begin{align}
J_{V_1} &( \bar \eta_R u ) \notag\\
&\ge E_{V_1} \| \bar \eta_R u \|_{L^2}^2 - \big(\|W_1\|_{L^1}+C\|W_2\|_{L^{3,\infty}}\|u\|_{H^1}^2\big) \|\tilde{\eta}_Ru\|_{L^2}^2 +o(R^0) \notag \\
&= \big(E_{V_1}-E_V -  \|W_1\|_{L^1}-C\|W_2\|_{L^{3,\infty}}\|u\|_{H^1}^2\big) \|\tilde{\eta}_Ru\|_{L^2}^2+E_V\|\tilde{\eta}_Ru\|_{L^2}^2+o(R^0). \label{eq:a3}
\end{align}
We claim that \eqref{eq:a1}, \eqref{eq:a2} and \eqref{eq:a3} imply \eqref{eq:a0}. Indeed, if $\|\tilde{\eta}_Ru\|_{L^2}=0$, then $\|\eta_Ru\|_{L^2}=1$ and \eqref{eq:a0} follows from \eqref{eq:a1} and \eqref{eq:a2}. If $\|\eta_Ru\|_{L^2}=0$, then $\|\tilde{\eta}_Ru\|_{L^2}=1$ and \eqref{eq:a0} follows from \eqref{eq:a1} and \eqref{eq:a3}. Finally if both $\|\eta_Ru\|_{L^2}\neq0$ and $\|\tilde{\eta}_Ru\|_{L^2}\neq0$, then combining \eqref{eq:a1}, \eqref{eq:a2} and \eqref{eq:a3} gives \eqref{eq:a0}, since $\|\eta_Ru\|_{L^2}^2+\|\tilde{\eta}_Ru\|_{L^2}^2=1$.

\medskip

\noindent \textbf{Step 3}: We prove that $(u_j)_{j\in\mathbb{N}}$ is bounded in $\mathcal{Q}_V$ (equipped with the norm defined in \eqref{eq:normQ}), uniformly in $W_2$ such that $C\|W_2\|_{L^{3,\infty}}\le \frac12(1-a)$, for some universal constant $C$.

It suffices to show that $(\langle u_j,-\Delta u_j\rangle+\langle u_j,V_+u_j\rangle)_{j\in\mathbb{N}}=(\langle u_j,H_{V_+}u_j\rangle)_{j\in\mathbb{N}}$ is bounded. We proceed as in the proof of Lemma \ref{lm:JV}. We write
\begin{align}\label{eq:b1}
\langle u_j,H_{V_+} u_j\rangle = J_V(u_j) +\langle u_j,V_-u_j\rangle+\int_{\mathbb{R}^3} \big(w*|u_j|^2\big)(x) |u_j(x)|^2 \diff x.
\end{align}
For the second term in the right-hand-side of the previous equation, we use Hypothesis \ref{condV}, which implies that
\begin{align}\label{eq:b-1}
 \langle u_j,V_-u_j\rangle \le a\langle u_j,H_{V_+} u_j\rangle+b\|u_j\|_{L^2}^2.
\end{align}
The third term of the right-hand-side of \eqref{eq:b1} can be estimated using \eqref{eq:convol-ineq1} and \eqref{eq:convol-ineq2}, namely
\begin{align}
\Big | \int_{\mathbb{R}^3} \big(w*|u_j|^2\big)(x) |u_j(x)|^2 \diff x \Big | &\le \|W_1\|_{L^1}\|u_j\|_{L^2}^4+C\|W_2\|_{L^{3,\infty}}\|u_j\|_{\dot{H}^1}^2\|u_j\|_{L^2}^2\notag\\
&\le \|W_1\|_{L^1}+C\|W_2\|_{L^{3,\infty}}\langle u_j,H_{V_+}u_j\rangle,\label{eq:b-2}
\end{align}
since $\|u_j\|_{L^2}=1$. Inserting \eqref{eq:b-1}--\eqref{eq:b-2} into \eqref{eq:b1}, assuming that $C\|W_2\|_{L^{3,\infty}}<1-a$, we obtain
\begin{align}\label{eq:b2}
\langle u_j,H_{V_+}u_j\rangle\le \frac{1}{1-a-C\|W_2\|_{L^{3,\infty}}} \Big ( J_V(u_j) + b + \|W_1\|_{L^1}\Big) .
\end{align}
Since $(J_V(u_j))_{j\in\mathbb{N}}$ converges, it is bounded. This proves that $(u_j)_{j\in\mathbb{N}}$ is bounded in $\mathcal{Q}_V$. In turn, since we can assume without loss of generality that $J_V(u_j)\le E_V+1$ for all $j$, one easily concludes form the previous equation together with \eqref{eq:r1} that $(u_j)_{j\in\mathbb{N}}$ is bounded in~$\mathcal{Q}_V$ uniformly in $W_2$ such that $C\|W_2\|_{L^{3,\infty}}\le \frac12(1-a)$.

\medskip

\noindent \textbf{Step 4}:  By Step 3, we know that $(u_j)_{j\in\mathbb{N}}$ is bounded in $\mathcal{Q}_V$. Hence there exists a subsequence, still denoted $(u_j)_{j\in\mathbb{N}}$, which converges weakly in $\mathcal{Q}_V$. Let $u_\infty\in \mathcal{Q}_V$ be its limit. In particular $u_j\to u_\infty$ weakly in $H^1(\mathbb{R}^3)$. We prove that $(u_j)_{j\in\mathbb{N}}$ converges strongly (along some subsequence) to $u_\infty$ in $L^2(\mathbb{R}^3)$.

As in Step 2, we write, for $R>0$,
\begin{equation}
\|u_j-u_\infty\|_{L^2}^2=\|\eta_R(u_j-u_\infty)\|_{L^2}^2+\|\tilde{\eta}_R(u_j-u_\infty)\|_{L^2}^2. \label{eq:c1}
\end{equation}
Consider first the term $\|\tilde{\eta}_R(u_j-u_\infty)\|_{L^2}^2$. Let $\varepsilon>0$. It follows from Step 2 that there exists~$R_0>0$ such that, for $R\ge R_0$,
\begin{equation}\label{eq:t1}
\|\tilde{\eta}_Ru_j\|_{L^2}^2\le \frac{J_V(u_j)- E_V}{E_{V_1}-E_V-4\|W_1\|_{L^1}-C\|W_2\|_{L^{3,\infty}}\|u_j\|_{H^1}^2}+\varepsilon.
\end{equation}
Here it should be noted that the term $E_{V_1}-E_V-4\|W_1\|_{L^1}-C\|W_2\|_{L^{3,\infty}}\|u_j\|_{H^1}^2$ is strictly positive. Indeed, we know from Step 3 that $(\|u_j\|_{H^1})_{j\in\mathbb{N}}$ is bounded uniformly in $W_2$ such that $C\|W_2\|_{L^{3,\infty}}\le \frac12(1-a)$. Together with \eqref{eq:e1}, this shows that
\begin{align*}
E_{V_1}-E_V-4\|W_1\|_{L^1}-C\|W_2\|_{L^{3,\infty}}\|u_j\|_{H^1}^2\ge \frac12(\mu_{V_1}-\mu_V)-6\|W_1\|_{L^1}- (C+C_V)\|W_2\|_{L^{3,\infty}},
\end{align*}
By the assumption \eqref{eq:condv:0}, the right-hand-side of the previous equation is strictly positive. Returning now to \eqref{eq:t1}, using in addition that $J_V(u_j)\to E_V$, we deduce that there exists~$j_0$ in~$\mathbb{N}$ such that, for all $j\ge j_0$ (and $R\ge R_0$),
\begin{equation}\label{eq:c2}
\|\tilde{\eta}_Ru_j\|_{L^2}^2\le 2\varepsilon.
\end{equation}
Using the lower semi-continuity of $\|\cdot\|_{L^2}$, we also obtain that, for $R\ge R_0$, 
\begin{equation}\label{eq:c3}
\|\tilde{\eta}_Ru_\infty\|_{L^2}^2\le\liminf_{j\to\infty}\|\tilde{\eta}_Ru_j\|_{L^2}^2\le 2\varepsilon.
\end{equation}

Now fix $R_0>0$ such that \eqref{eq:c2}--\eqref{eq:c3} hold and consider the term $\|\eta_{R_0}(u_j-u_\infty)\|_{L^2}^2$ from \eqref{eq:c1}.  Clearly, since $(u_j)_{j\in\mathbb{N}}$ converges weakly to $u_\infty$ in $H^1(\mathbb{R}^3)$, it follows that $(\eta_{R_0}u_j)_{j\in\mathbb{N}}$ converges weakly to $\eta_{R_0}u_\infty$ in $H^1(B_{2R_0})$, where $B_{2R_0}=\{x\in\mathbb{R}^3\, | \, |x|\le 2R_0\}$. The Rellich-Kondrachov Theorem then gives the existence of a subsequence, still denoted by $(\eta_{R_0}u_j)_{j\in\mathbb{N}}$, which converges strongly to $\eta_{R_0}u_\infty$ in $L^2(B_{2R_0})$. We can then conclude that there exists an integer~$j_1\ge j_0$, such that, for all $j\ge j_1$,
\begin{equation}
\|u_j-u_\infty\|_{L^2}^2=\|\eta_{R_0}(u_j-u_\infty)\|_{L^2}^2+\|\tilde{\eta}_{R_0}(u_j-u_\infty)\|_{L^2}^2 \le \varepsilon + 8\varepsilon.
\end{equation}
Hence $(u_j)_{j\in\mathbb{N}}$ converges strongly to $u_\infty$ in $L^2(\mathbb{R}^3)$.

\medskip

\noindent \textbf{Step 5}:  We prove that $u_\infty$ is a minimizer for $J_V$.

Obviously, since $u_j\to u_\infty$ strongly in $L^2(\mathbb{R}^3)$, we have that $\|u_\infty\|_{L^2}=1$. Moreover, we clearly have
\begin{equation*}
E_V\le J_V(u_\infty).
\end{equation*}
Hence it remains to show that $J_V(u_\infty)\le E_V$. 

Recall that
\begin{equation}\label{eq:d0}
J_V(u_\infty)=\langle u_\infty,(-\Delta + V) u_\infty\rangle - \int_{\mathbb{R}^3} \big(w*|u_\infty|^2\big)(x) |u_\infty(x)|^2 \diff x.
\end{equation}
By Step~3, $(u_j)$ is bounded in $H^1(\mathbb R^3)$ and by Step~4, $(u_j)$ converges strongly to $u_\infty$ in $L^2(\mathbb R^3)$. Hence Lemma~\ref{lem:LowerSemiContDeltaV} yields
\[
\langle u_{\infty},(-\Delta+V)u_{\infty}\rangle\leq\liminf_{j\to\infty}\langle u_{j},(-\Delta+V)u_{j}\rangle\,.
\]

 It remains to consider the quartic term in \eqref{eq:d0}. As in \eqref{eq:convol-ineq1}--\eqref{eq:convol-ineq2}, we have
\begin{align*}
\Big | \int_{\mathbb{R}^3}& \big(w*|u_j|^2\big)(x) |u_j(x)|^2 \diff x - \int_{\mathbb{R}^3} \big(w*|u_\infty|^2\big)(x) |u_\infty(x)|^2 \diff x\Big |\\
&=\Big | \int_{\mathbb{R}^3} \big(w*(|u_j|^2-|u_\infty|^2)\big)(x) |u_j(x)|^2 \diff x + \int_{\mathbb{R}^3} \big(w*|u_\infty|^2\big)(x) (|u_j(x)|^2-|u_\infty(x)|^2 \diff x\Big |\\
&\lesssim \big(\|W_1\|_{L^1}+\|W_2\|_{L^{3,\infty}}\big)\big(\|u_j\|_{H^1}^2+\|u_\infty\|_{H^1}^2\big) \big\||u_j|^2-|u_\infty|^2\big\|_{L^1}\\
&\lesssim \big(\|W_1\|_{L^1}+\|W_2\|_{L^{3,\infty}}\big)\big(\|u_j\|_{H^1}^2+\|u_\infty\|_{H^1}^2\big)\|u_j+u_\infty\|_{L^2}\|u_j-u_\infty\|_{L^2}\\
&\lesssim \big(\|W_1\|_{L^1}+\|W_2\|_{L^{3,\infty}}\big)\|u_j-u_\infty\|_{L^2},
\end{align*}
where we used in the last inequality that $\|u_j\|_{L^2}=1$ and that $(u_j)_{j\in\mathbb{N}}$ is bounded in $H^1(\mathbb{R}^3)$. Since $u_j\to u_\infty$ strongly in $L^2(\mathbb{R}^3)$, this yields
\begin{equation}\label{eq:d6}
\int_{\mathbb{R}^3} \big(w*|u_\infty|^2\big)(x) \, |u_\infty(x)|^2 \diff x=\lim_{j\to\infty} \int_{\mathbb{R}^3} \big(w*|u_j|^2\big)(x) \, |u_j(x)|^2 \diff x .
\end{equation}

Inserting \eqref{eq:d1}, \eqref{eq:d2}, \eqref{eq:d3} and \eqref{eq:d6} into \eqref{eq:d0}, we finally obtain that
\begin{equation*}
J_V(u_\infty)\le\liminf_{j\to\infty}J_V(u_j)=E_V.
\end{equation*}
This concludes the proof of the proposition.
\end{proof}

\begin{small}
\bibliographystyle{plain}
\bibliography{biblio}
\end{small}
\end{document}